\documentclass[a4paper]{amsart}
\usepackage[utf8]{inputenc}
\usepackage[T1]{fontenc}
\usepackage{amssymb}
\usepackage{amsmath}
\usepackage{amsthm}
\usepackage[english]{babel}
\usepackage[noadjust]{cite}
\usepackage[hang]{caption}
\usepackage{tikz}
\usepackage{tikz-cd} 
\usepackage{enumerate}
\usepackage{mathrsfs}
\usepackage{xcolor}
\usepackage{hyperref}
\usepackage{physics}
\usepackage{bbm}
\usepackage[normalem]{ulem}

\title{Spectrum of invariant measures via generic points}

\author[S. Babel]{Sejal Babel}
\address[S. Babel]{
Faculty of Mathematics and Computer Science, Jagiellonian University in Krakow, ul. \L o\-jasiewicza 6, 30-348 Krak\'ow, Poland
}\email{babelsejalm@gmail.com} 

\author[M.~E.~Can]{Melih Emin Can}
\address[M. E. Can]{
Faculty of Mathematics and Computer Science, Jagiellonian University in Krakow, ul. \L o\-jasiewicza 6, 30-348 Krak\'ow, Poland}\email{m.emincan93@gmail.com}
\author[D.~Kwietniak]{Dominik Kwietniak} 
\address[D. Kwietniak]{
Faculty of Mathematics and Computer Science, Jagiellonian University in Krakow, ul. \L o\-jasiewicza 6, 30-348 Krak\'ow, Poland}
\email{dominik.kwietniak@uj.edu.pl}
\urladdr{www.im.uj.edu.pl/DominikKwietniak/}

\author[P.~Oprocha]{Piotr Oprocha}
\address[P. Oprocha]{National Supercomputing Centre IT4Innovations, Division of the University of Ostrava,
Institute for Research and Applications of Fuzzy Modeling,
30. dubna 22, 70103 Ostrava, Czech Republic
	-- \and --
AGH University of Science and Technology, Faculty of Applied
	Mathematics, al.
	Mickiewicza 30, 30-059 Krak\'ow, Poland}
\email{piotr.oprocha@osu.cz}

\date{\today}

\newcommand{\red}{\color{red}}
\newcommand{\blue}{\color{blue}}

\newcommand{\C}{\mathbb{C}}

\newcommand{\R}{\mathbb{R}}
\newcommand{\bS}{\mathbb{S}}

\newcommand{\N}{\mathbb{N}}
\newcommand{\Z}{\mathbb{Z}}

\newcommand{\cM}{\mathcal{M}}
\newcommand{\cD}{\mathcal{D}}

\newcommand{\Ms}{\mathcal{M}_{\sigma}}
\newcommand{\MS}{\mathcal{M}_S}
\newcommand{\MT}{\mathcal{M}_T}

\newcommand{\Mse}{\mathcal{M}^e_{\sigma}}
\newcommand{\MSe}{\mathcal{M}^e_S}
\newcommand{\MTe}{\mathcal{M}^e_T}
\newcommand{\dbar}{\bar{d}}
\newcommand{\dbarm}{\bar{d}_{\mathcal{M}}}

\renewcommand{\subset}{\subseteq}
\renewcommand{\phi}{\varphi}
\newcommand{\eps}{\varepsilon}
\makeatletter
\def\blfootnote{\gdef\@thefnmark{}\@footnotetext}
\hypersetup{
    colorlinks=true,
    linkcolor=black,
    citecolor=blue,
    urlcolor=black,
    pdftitle={Overleaf Example},
    pdfpagemode=FullScreen,
    }

\theoremstyle{plain}
\newtheorem{thm}{Theorem}[section]
\newtheorem{lem}[thm]{Lemma}
\newtheorem{cor}[thm]{Corollary}

\theoremstyle{definition}
\newtheorem{defn}[thm]{Definition}
\newtheorem{rem}[thm]{Remark}

\DeclareMathOperator{\spn}{span}
\DeclareMathOperator{\Freq}{Freq}

\DeclareMathOperator{\clspn}{\overline{span}}

\DeclareMathOperator{\Gen}{Gen}

\DeclareMathOperator{\Spec}{Spec}

\DeclareMathOperator{\Av}{A}




\newcommand{\Kron}{\mathcal{K}}

\newcommand{\bshift}{X_{\mathscr{B}}}
\newcommand{\bkshift}{X_{\mathscr{B}(k)}}

\numberwithin{equation}{section}
\begin{document}
\begin{abstract} 
We describe the spectrum of an ergodic invariant measure by examining the behaviour of its generic points. 
We define Wiener--Wintner generic points for a measure 
to generalise the characterisation of generic points for 
discrete spectrum measures from Lenz et al. [Ergodic Theory and Dynamical Systems vol. \textbf{44} (2024), no. 2, 524--568]. We also study limits of sequences of generic points with respect to the Besicovitch pseudometric. This translates to results about limits of measures with respect to the metric rho-bar $\bar{\rho}$ generalising Ornstein's d-bar metric. We study how the spectrum behaves when passing to the limit, and we prove that points generic for discrete spectrum, totally ergodic, or (weakly) mixing measures, property K, and zero entropy measures form a closed set with respect to the Besicovitch pseudometric. Hence, the same holds for corresponding measures with respect to the rho-bar metric.  
Our methods have already been used to prove existence of ergodic measures with desired properties, in particular with discrete spectrum.
\end{abstract}

\subjclass[2020]{37A05, 37A30, 37B10}
\keywords{generic points, spectrum of an invariant measure, Besicovitch pseudometric, Wiener-Wintner ergodic theorem}

\maketitle 

\section{Introduction}
Let $T\colon X\to X$ be a continuous map on a compact metric space. The pair $(X,T)$ is referred to as a \emph{topological dynamical system}. 
We call $x\in X$ a \emph{generic point} for $\mu$ if for every $f\in C(X)$ we have
\begin{equation}\label{eq:ergodic-aver}
\lim_{n\to\infty}\frac1n\sum_{j=0}^{n-1}f(T^j(x))=\int_Xf\text{ d}\mu.
\end{equation}
We consider the following problem: how to determine the properties of $\mu$ given a $\mu$-generic point $x$? We would like to deduce properties of $\mu$ by looking only at the statistical behaviour of the orbit $x$. 
Note that every ergodic $T$-invariant measure always has a generic point, but non-ergodic measures need not to have them. Nevertheless,  in some topological dynamical systems every invariant measure has generic points, see \cite{ABC, CSh, DGS, GelfertKwietniak, KLO, Sigmund}. 
In \cite{Oxtoby}, Oxtoby gave a characterisation of generic points of ergodic measures. Many authors explored similar criteria for ergodicity, see \cite{GIKN, BDG, BKPLR, Weiss}. In particular, the wonderful book by Weiss \cite{Weiss} devoted a lot of space to problems of this type. 

Here, we address two variants of our motivating question. 

First, we would like to check if the spectrum of a measure could be recovered from the behaviour of a generic point. We are inspired by results obtained by Lenz and his co-authors \cite{Lenz,LSS}. We note that \cite{Lenz,LSS} uses a more general setting of ergodic topological dynamical systems given by actions of locally
compact, $\sigma$-compact Abelian groups on metrisable compact spaces. On the other hand,  \cite{Lenz,LSS} concentrate on systems with discrete (pure-point) spectrum. In particular, \cite{LSS} contains a characterisation of ergodic systems with discrete spectrum  via the existence of Besicovitch almost periodic points. Inspired by these results, and using the Wiener--Wintner ergodic theorem \cite{Assani22}, we define Wiener--Wintner $\mu$-generic points. {\red Either Wiener-Wintner or Fourier-Bohr, why both?}
Roughly speaking, a point $x$ is Fourier--Bohr generic for $\mu\in\MT(X)$ if \eqref{eq:ergodic-aver} holds and the Wiener--Wintner theorem is satisfied along the orbit of $x$ for every continuous $f\colon X\to\R$. The latter means that for every $\xi\in\bS^1$ the following limit exists
\[
\Av[f,\bar{\xi}](x)=\lim_{n\to\infty} \frac{1}{n}\sum_{j=0}^{n-1}\xi^{-j}f(T^j(x)).
\]
Regularity of a Fourier--Bohr generic point $x$ means that
\[
\norm{P_{\Kron}(f)}^2_2=\sum_{\xi\in\bS^1} \lvert \Av[f,\bar{\xi}](x)\rvert^2,
\]
where $\norm{P_{\Kron}(f)}^2_2$ is the length of the projection of $f$ onto the Kronecker factor of $\mu$. 
We prove that a Wiener--Wintner $\mu$-generic point captures the spectral properties of the measure it generates. We note that Wiener--Wintner generic points for ergodic $\mu$ with discrete spectrum are  Besicovitch almost periodic points studied in \cite{LSS}. 

Second, we would like to determine  which properties of generic points are closed with respect to \emph{Besicovitch pseudometric} $D_B$ on $(X,T)$ which is given for $x,x'\in X$ by
	\begin{equation}\label{eq:D_B}
	    D_B(x,x')=\limsup_{N\to\infty}\frac 1N\sum_{n=0}^{N-1}\rho(T^n(x),T^n(x')),
	\end{equation}
where $\rho$ is a metric on $X$.
The idea of measuring the average distance along two orbits using $D_B$ is so natural that it is hard to determine who was the first to use it. The pseudometric $D_B$ bears the name of Besicovitch because he utilised a similar pseudo-distance while studying almost periodic functions, though the terminology is not universally adopted.  
Besicovitch and related pseudometrics have proven to be useful in dynamics multiple times; see \cite{Auslander59, BFK, Downarowicz14, DownarowiczGlasner, FGJ, KKK1,KKK2,CKKK,KLO, OW04}.

Here, we connect $D_B$ with a metric $\bar{\rho}$ on the space of all $T$-invariant measures. The metric $\bar{\rho}$ generalises Ornstein's d-bar metric for finite-valued ergodic discrete-time stochastic processes. 
The aforementioned processes appear naturally in ergodic theory from
studying measure-preserving transformations through their action on finite measurable partitions of the underlying measure space. In that context, the finiteness of the partition becomes the discreteness of the state space in which the variables of the process take their values. In topological dynamics, it seems desirable to have tools applicable directly to homeomorphisms on general state spaces, rather than the pair consisting of the transformation and a fixed partition.

We study how the spectrum behaves when we take $\bar{\rho}$-limit of a sequence of measures. We show that the spectrum of the $\bar{\rho}$-limit measure is contained in the set of $\xi\in\mathbb{S}^1$ that belong to the spectra of all except possibly finitely many measures in the $\bar{\rho}$-convergent sequence. The result combines application of the theory introduced earlier with the results we provide for $\bar{\rho}$ and its connection with the Besicovitch pseudo-distance.
In addition, we prove that points generic for ergodic measures, discrete spectrum measures, (weakly) mixing measures, zero entropy measures, and measures with property K form a $D_B$-closed sets and translate these results to limits of measures with respect to  $\bar{\rho}$. For some of these properties, namely discrete spectrum  and zero-entropy systems, our results are a special case of the $D_B$-closedness of generic points for measures belonging to one of characteristic classes (see below and \cite{KKPLT23}) that was recently established by the authors of  \cite{BL}. 
While some of these aforementioned results were known in one form or another, they were primarily studied in the context of shift-invariant measures on symbolic spaces and Ornstein's d-bar metric. 
Other results seem to be new even for shift-invariant measures. Furthermore, we offer a uniform approach and extend these results beyond the setting of shift spaces. Our proofs for $\bar{\rho}$ convergence do not depend on their shift-invariant d-bar predecessors (in cases, when such results are available). 
Our methods also lead to a new proof of the rational discrete spectrum of the Mirsky measure associated with a given set of $\mathscr B$-free numbers, which highlights the utility of this new approach. We discuss some other applications in the last section.

\subsection*{Note added after a revision} 
After this paper was completed, we learnt of the work of Lenz and Strungaru~\cite{Lenz-WW}, which independently studies Wiener--Wintner type averages and their relation to the spectrum of invariant measures of topological dynamical systems. We note that Lenz and Strungaru formulate their results for general tempered F{\o}lner sequences in arbitrary $\sigma$-compact locally compact abelian groups $G$, while  throughout our paper, we work with the standard F{\o}lner sequence given by the intervals $\{0, 1, \ldots, n-1\}$ in~$G=\mathbb{Z}$.
The core observation in both papers is the same: convergence of weighted ergodic averages along the orbit of a generic point forces the existence of eigenfunctions of the Koopman representation, and one can single out a class of distinguished generic points for which the resulting eigenfunctions form an orthonormal basis of the Kronecker factor. The points we initially coined ``regular Wiener--Wintner $\mu$-generic points'' (see Definition~\ref{def:reg_WW_gen} below) coincide with the ``Wiener--Wintner points''  of $(X,G,\mu)$ in Lenz and Strungaru (Definition~5.2 in~\cite{Lenz-WW}) in the common setting $G = \mathbb{Z}$.  Both papers independently establish that the existence of such a point characterises ergodicity, that in the ergodic case almost every point is Wiener--Wintner generic, and that the discrete spectrum is characterised by points satisfying a strengthened form of this condition, see Theorem~\ref{thm:Discrete_spectrum}, which covers case $G = \mathbb{Z}$ of the characterisation of Lenz, Spindeler, and Strungaru~\cite{LSS}, and~\cite[Theorem~6.1]{Lenz-WW}, which identifies Wiener--Wintner points with generic Besicovitch almost periodic points in the pure point spectrum setting. The main differences lie in scope and focus: Lenz and Strungaru work in the greater generality of actions of locally compact abelian groups and general F{\o}lner sequences, 
and develop applications to translation bounded measure dynamical systems arising in aperiodic order. On the other hand, we develop the theory of the $\bar{\rho}$-metric on invariant measures---a generalisation of Ornstein's $\bar{d}$-metric---and establish closedness of the sets of measures with numerous dynamical properties (discrete spectrum, weak mixing, mixing, total ergodicity, property~$K$, zero entropy) under $\bar{\rho}$-convergence, with applications to $\mathscr{B}$-free shifts, specification properties, and vague specification. As we mentioned above, in the current version of this paper, we have adapted our terminology to be consistent with that of Lenz and Strungaru; in particular, what was called a ``regular Wiener--Wintner generic point'' in our original arXiv submission is now referred to as a ``Wiener--Wintner point'', in line with~\cite{Lenz-WW}. Similarly, we now write ``Fourier--Bohr $\mu$-generic point'' for what was called before ``Wiener--Wintner $\mu$-generic point''.

\section{Preliminaries}

For the remainder of this text, we assume that $(X,T)$ is an \emph{invertible} topological dynamical system; that is, $X$ is a compact metric space and $T\colon X\to X$ is a homeomorphism. All results presented here remain true if $T$ is just a continuous map from $X$ to itself. For a proof, one needs to pass to the natural extension and use the results presented here in conjunction with the well-known properties of the natural extension. The details are left to an interested reader.

Let $\rho$ be a compatible metric on $X$. Denote by $C(X)$ the separable Banach space of continuous maps $f\colon X\to \C$ endowed with the norm  $\norm{f}_\infty=\sup_{x\in X} |f(x)|$. 

Let $\cM(X)$ be the space of all Borel probability measures on $X$ endowed with the weak$^*$ topology.
We say that $\mu\in\cM(X)$ is \emph{$T$-invariant}, if $\mu(T^{-1}(B))=\mu(B)$ for every Borel set $B\subset X$. We write $\MT(X)$ for the family of all $T$-invariant measures. A measure $\mu\in\MT(X)$ is \emph{ergodic} if for every Borel set $B\subset X$ with $T^{-1}(B)=B$ either $\mu(B)=0$ or $\mu(B)=1$. The set of all ergodic measures is denoted by $\MTe(X)$. The set $\MT(X)$ is always nonempty, compact, and convex subset of $\cM(X)$ and $\MTe(X)$ is the set of extreme points of $\MT(X)$. 
For $x\in X$ we write $\delta(x)\in\mathcal{M}(X)$ for the~Dirac measure supported on~$\{x\}$.
Let $(n_k)_{k=1}^\infty$ be a sequence of positive integers with $n_k\nearrow \infty$. 
We say that $x\in X$ is \emph{quasi-generic for $\mu\in\MT(X)$ along $(n_k)_{k=1}^\infty$} if for every $f\in C(X)$ we have
\[
\lim_{k\to\infty}\frac{1}{n_k}\sum_{j=0}^{n_k-1} f(T^j(x))=\int f\text{ d}\mu. 
\]
We say that $x\in X$ is a \emph{generic point} for $\mu\in\MT(X)$ if $x$ is quasi-generic for $\mu$ along
every strictly increasing sequence $(n_k)$. We write $\Gen(\mu)$ for the set of $\mu$ generic points.

A measure $\mu$ is ergodic if and only if $\mu(\Gen(\mu))=1$ (see \cite{Oxtoby}). 
Fixing a $T$-invariant Borel probability measure $\mu$ we obtain an invertible 
measure-preserving system denoted by $(X, T,\mu)$. We omit the $\sigma$-algebra from our notation, as in our considerations the $\sigma$-algebra is always the Borel $\sigma$-algebra of $X$ or its completion. We write $L^1(X,\mu)$ (respectively, $L^2(X,\mu)$) for the corresponding Banach (respectively, Hilbert) space of equivalence classes of integrable (respectively, square-integrable) complex-valued functions on $X$. 
Let $U_T$ be the associated \emph{Koopman operator} that is, $U_T\colon L^2(X,\mu)\to L^2(X,\mu)$ is the unitary operator given by $U_T(f)=f\circ T$. 

The \emph{eigenvalues} and \emph{eigenfunctions} of $(X,T,\mu)$ are the eigenvalues and eigenfunctions of the associated Koopman operator. For the basic properties of eigenvalues and eigenvectors of $U_T$ we refer the reader to \cite[pp. 68--69]{Walters}.

For $\xi\in \bS^1$, we slightly abuse the terminology and call $\ker(U_T-\xi\text{Id})$ the eigenspace  of $\xi$. We denote by $P_\xi$ the orthogonal projection of $L^2(X,\mu)$ onto the eigenspace of $\xi$. Clearly, $\ker(U_T-\xi\text{Id})\neq\{0\}$ and $P_\xi \neq 0$ if and only if $\xi$ is an eigenvalue of $(X,T,\mu)$. 
 
We denote by $\Kron$ the closed linear subspace of $L^2(X,\mu)$ spanned by the eigenfunctions of $(X,T,\mu)$ and by 
$P_\Kron\colon L^2(X,\mu)\to\Kron$  the associated orthogonal projection. When we refer to $\Kron$, the context always makes it clear which $L^2$ space is being considered.

We say that $(X,T,\mu)$ has \emph{discrete} (or \emph{pure-point}) \emph{spectrum} if the linear span of the eigenfunctions of $U_T$ is dense in $L^2(X,\mu)$.

\section{Fourier--Bohr generic points}
Let $f\colon X\to \C$, $\xi\in\bS^1$, and $n\ge 1$. Given $x\in X$ we define
\begin{equation}\label{eq:WW}
\Av_n[f,\bar{\xi}](x)=\frac{1}{n}\sum_{j=0}^{n-1}\xi^{-j}f(T^j(x)).\end{equation}
Whenever we write $\Av[f,\bar{\xi}](x)$ we mean 
\begin{equation}
    \label{eq:Av=lim}
    \Av[f,\bar{\xi}](x)=\lim_{n\to\infty}\Av_n[f,\bar{\xi}](x),
\end{equation}
in particular, writing $\Av[f,\bar{\xi}](x)$ we always assume that the sequence $(\Av_n[f,\bar{\xi}](x))_{n=1}^\infty$ converges. 
The celebrated Wiener--Wintner ergodic theorem \cite{Assani22} states that given  a measure-preserving system 
$(X,T,\mu)$ 
and $f\in L^1(X,\mu)$ then for $\mu$-a.e. $x$ the limit $\Av[f,\bar{\xi}](x)$
exists for all $\xi\in\bS^1$, that is, there exists 
a Borel set $X_f$ with $\mu(X_f)=1$ such that for every $\xi\in\bS^1$ and $x\in X_f$ 
the sequence $\Av_n[f,\bar{\xi}](x)$ converges as $n\to\infty$.

Inspired by the concept of generic points, we define Fourier--Bohr $\mu$-generic points.

\begin{defn}
We say that $x\in X$ is a \emph{Fourier--Bohr $\mu$-generic point}  if 
$x$ is a generic point for $\mu$ 
and for every $f\in C(X)$ and $\xi\in\bS^1$ the sequence $(\Av_n[f,\bar{\xi}](x))_{n=1}^\infty$ converges as $n\to\infty$. 
\end{defn}

\begin{rem} Note that the existence of the limit in \eqref{eq:Av=lim} is not enough to associate a unique measure to a point $x$.
Every point such that the sequence $(\Av_n[f,\bar{\xi}](x))_{n=1}^\infty$ defined by \eqref{eq:WW}
converges for every $f\in C(X)$ and $\xi=1$ is a generic point for some $T$-invariant measure. Indeed, let $x\in X$ be such that $\Av[f,\bar{\xi}](x)$ exists for every $f\in C(X)$ and every $\xi\in\bS^1$. Consider the map $C(X) \ni f\mapsto \Av[f,\bar{1}](x)\in\C$. It is a continuous positive linear functional on $C(X)$, so by the Riesz representation theorem there exists a Borel probability measure $\mu\in\mathcal{M}(X)$ such that for every $f\in C(X)$ we have
\begin{equation} 
\lim_{n\to\infty}\frac1n\sum_{j=0}^{n-1} f(T^j(x))=\int f\text{ d}\mu.\label{eq:integral_WW}
\end{equation}
Observe that \eqref{eq:integral_WW} implies $\int f \text{ d}\mu =\int f\circ T \text{ d}\mu$ for every $f\in C(X)$, which means that $\mu\in\MT(X)$.
\end{rem}

The lemmas below help us decide whether a point is Fourier--Bohr $\mu$-generic. We omit their straightforward proofs. Recall that $\clspn(\cD)$ stands for the closure of the linear span of $\cD\subset C(X)$ in the $\sup$-norm $\norm{\cdot}_\infty$.

\begin{lem}\label{WWP_subset}
Let $x\in X$ and $\xi\in\bS^1$. If $(n(k))_{k=1}^\infty\subseteq\N$ is 
strictly increasing 
and $\cD\subseteq C(X)$ are such that for every $f\in\cD$ 
the sequence $(\Av_{n(k)}[f,\bar{\xi}](x))_{k=1}^\infty$ has a limit $\Phi_\xi(f)$, then the map  $\clspn(\cD)\ni f\mapsto \Phi_\xi(f)\in\C$ 
is well-defined, continuous, and linear. 
\end{lem}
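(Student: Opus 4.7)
The plan is to first build $\Phi_\xi$ on the algebraic span $\spn(\cD)$ by linearity, then extend to the closure by a density argument. The crucial ingredient will be the uniform bound
\[
|\Av_n[g,\bar\xi](x)|\le \frac{1}{n}\sum_{j=0}^{n-1}|\xi^{-j}|\,|g(T^j(x))|\le \|g\|_\infty,
\]
valid for every $n\ge 1$, every $x\in X$, every $\xi\in\bS^1$, and every $g\in C(X)$, which says that each functional $\Av_n[\,\cdot\,,\bar\xi](x)$ is a contraction on $(C(X),\|\cdot\|_\infty)$. For fixed $n$ the map $g\mapsto \Av_n[g,\bar\xi](x)$ is also visibly linear in $g$, as $g$ enters the defining formula linearly.

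First I would treat $f=\sum_{i=1}^N a_i f_i$ with $f_i\in\cD$. Linearity of $\Av_{n(k)}[\,\cdot\,,\bar\xi](x)$ combined with the assumed convergence on each $f_i$ yields $\Av_{n(k)}[f,\bar\xi](x)\to \sum_{i=1}^N a_i\Phi_\xi(f_i)$, so defining $\Phi_\xi(f):=\sum_{i=1}^N a_i\Phi_\xi(f_i)$ gives a linear functional on $\spn(\cD)$; passing the uniform bound to the limit shows $|\Phi_\xi(f)|\le \|f\|_\infty$ there.

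To extend to $\clspn(\cD)$, for $f\in\clspn(\cD)$ I would pick $f_m\in\spn(\cD)$ with $\|f_m-f\|_\infty\to 0$ and run the standard three-$\eps$ argument: for every $k,j,m$,
\[
|\Av_{n(k)}[f,\bar\xi](x)-\Av_{n(j)}[f,\bar\xi](x)|\le 2\|f-f_m\|_\infty+|\Av_{n(k)}[f_m,\bar\xi](x)-\Av_{n(j)}[f_m,\bar\xi](x)|.
\]
Choosing $m$ large enough to make the first summand small, and then $k,j$ large enough to make the second small, shows that $(\Av_{n(k)}[f,\bar\xi](x))_k$ is Cauchy in $\C$, so the limit $\Phi_\xi(f)$ exists; this proves that $\Phi_\xi$ is well-defined on $\clspn(\cD)$. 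Linearity and the inequality $|\Phi_\xi(f)|\le \|f\|_\infty$ both survive the passage to the limit, so $\Phi_\xi$ is continuous with operator norm at most one. There is no genuine obstacle here; the one point to stress is that the uniform bound on $\Av_n[\,\cdot\,,\bar\xi](x)$ is independent of $n$, $x$, and $\xi$, which is exactly what makes the density extension go through without any hypothesis beyond the assumed pointwise convergence on $\cD$.
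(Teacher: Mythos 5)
Your proof is correct, and it is exactly the standard argument the paper has in mind (the authors omit the proof as straightforward): linearity of each $\Av_{n(k)}[\,\cdot\,,\bar\xi](x)$ plus the uniform contraction bound $|\Av_n[g,\bar\xi](x)|\le\|g\|_\infty$ give convergence on the span, and the three-$\eps$ Cauchy argument extends existence of the limit, linearity, and the norm bound to $\clspn(\cD)$. No gaps; well-definedness on the span is automatic since the value is the limit of a sequence depending only on $f$, not on its representation.
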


\begin{lem}\label{lem:WWdense}
      Let $\cD$ be a countable set with 
      $\clspn(\cD)=C(X)$ and $\mu\in \MT(X)$. A point $x\in X$ is a Fourier--Bohr $\mu$-generic if and only if for every $f\in \cD$ and $\xi\in\bS^1$ the sequence \eqref{eq:WW} converges as $n\to\infty$ and \eqref{eq:integral_WW} holds.     
\end{lem}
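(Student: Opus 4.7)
The forward implication is immediate: every $f\in\cD$ lies in $C(X)$ and $\mu$-genericity of $x$ yields \eqref{eq:integral_WW}, while the Wiener--Wintner condition in the definition gives convergence of $(\Av_n[f,\bar\xi](x))$ for each $\xi\in\bS^1$. So the content is in the reverse implication, which I plan to handle by a uniform approximation argument resting on the key observation that the averaging map is a contraction for $\norm{\cdot}_\infty$:
\[
\bigl|\Av_n[h,\bar\xi](x)\bigr| \le \frac1n\sum_{j=0}^{n-1}|\xi^{-j}|\,|h(T^j x)| \le \norm{h}_\infty,
\]
for every $h\in C(X)$, every $\xi\in\bS^1$, and every $n\ge 1$, since $|\xi|=1$.

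Assume the hypothesis. Fix an arbitrary $f\in C(X)$ and $\xi\in\bS^1$, and let $\eps>0$. Since $\clspn(\cD)=C(X)$, pick a finite linear combination $g=\sum_{i=1}^{k} c_i d_i$ with $d_i\in\cD$, $c_i\in\C$, and $\norm{f-g}_\infty<\eps$. By linearity of $h\mapsto \Av_n[h,\bar\xi](x)$ in $h$,
\[
\Av_n[g,\bar\xi](x)=\sum_{i=1}^k c_i\,\Av_n[d_i,\bar\xi](x),
\]
and the right-hand side converges as $n\to\infty$ because each summand does by hypothesis. Combining this with the contraction bound applied to $h=f-g$ gives
\[
\bigl|\Av_n[f,\bar\xi](x)-\Av_n[g,\bar\xi](x)\bigr|\le \norm{f-g}_\infty<\eps
\]
uniformly in $n$, so
\[
\limsup_{n\to\infty}\Av_n[f,\bar\xi](x)-\liminf_{n\to\infty}\Av_n[f,\bar\xi](x)\le 2\eps.
\]
Since $\eps>0$ was arbitrary, $(\Av_n[f,\bar\xi](x))_{n=1}^\infty$ converges.

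The same approximation disposes of the genericity condition \eqref{eq:integral_WW}. Writing $S_n(h)(x):=\frac1n\sum_{j=0}^{n-1}h(T^j x)$ and using $\bigl|S_n(f-g)(x)\bigr|\le\norm{f-g}_\infty$ together with $\bigl|\int(f-g)\,d\mu\bigr|\le\norm{f-g}_\infty$, we obtain
\[
\bigl|S_n(f)(x)-\tfrac{}{}{\textstyle\int} f\,d\mu\bigr|\le \bigl|S_n(g)(x)-{\textstyle\int} g\,d\mu\bigr|+2\norm{f-g}_\infty,
\]
and by linearity the first term tends to $0$ since it does for each $d_i\in\cD$; letting $n\to\infty$ and then $\eps\to 0$ yields \eqref{eq:integral_WW} for $f$. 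Thus $x$ is Wiener--Wintner $\mu$-generic. There is no real obstacle here: the only point to watch is the uniform-in-$n$ contraction bound, which works for every $\xi\in\bS^1$ simultaneously precisely because $|\xi^{-j}|=1$; once that is in hand, the statement is a routine Banach--Steinhaus-style density argument and Lemma \ref{WWP_subset} is not strictly needed (though it identifies the limits with a continuous linear functional $\Phi_\xi$ on $C(X)$).
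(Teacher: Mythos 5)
Your argument is correct and is essentially the proof the paper has in mind (and omits as straightforward): a uniform-in-$n$ and uniform-in-$\xi$ contraction bound $\lvert\Av_n[h,\bar\xi](x)\rvert\le\norm{h}_\infty$ plus density of $\spn(\cD)$, which is exactly the content of Lemma \ref{WWP_subset} applied with $n(k)=k$. The only cosmetic point is that, since the averages are complex-valued, the ``$\limsup-\liminf\le 2\eps$'' step should be phrased via the Cauchy criterion (or applied to real and imaginary parts), which changes nothing of substance.
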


Using Lemma \ref{lem:WWdense} and the Wiener--Wintner theorem we will easily see that for every ergodic $\mu\in\MT(X)$ we have that $\mu$-a.e. $x\in X$ is a Fourier--Bohr $\mu$-generic point.

\begin{lem}\label{lem:Full_set_WWRGP}
    If $(X,T)$ is a topological dynamical system and $\mu\in \MTe(X)$, then there exists 
    a Borel set $X_\mu$ with $\mu(X_\mu)=1$ such that every 
$x\in X_\mu$ is a Fourier--Bohr $\mu$-generic point.
\end{lem}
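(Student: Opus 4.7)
The plan is to invoke Lemma \ref{lem:WWdense} so as to replace the uncountable collections of test functions $f\in C(X)$ and phases $\xi\in\bS^1$ by a countable amount of data, and then to combine the Wiener--Wintner theorem recalled above with Birkhoff's ergodic theorem. Since $X$ is compact metric, $C(X)$ is separable in the sup-norm; I would first fix a countable set $\cD\subseteq C(X)$ with $\clspn(\cD)=C(X)$, so that by Lemma \ref{lem:WWdense} the Wiener--Wintner genericity condition only has to be checked on $\cD$ (for every $\xi\in\bS^1$).

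For each $f\in\cD$, I would apply the Wiener--Wintner theorem to $f$ viewed as an element of $L^1(X,\mu)$ (legitimate because $X$ is compact and $f$ is bounded) to obtain a Borel set $X_f\subseteq X$ with $\mu(X_f)=1$ such that, for every $x\in X_f$, the sequence $(\Av_n[f,\bar\xi](x))_{n\ge 1}$ converges for \emph{every} $\xi\in\bS^1$ simultaneously. Separately, ergodicity of $\mu$ together with Birkhoff's pointwise ergodic theorem applied to each $f\in\cD$ yields a full-measure Borel set $Y_f\subseteq X$ on which \eqref{eq:integral_WW} holds. (One can fold this into the $\xi=1$ case of Wiener--Wintner by invoking ergodicity, but separating the two pieces keeps the argument transparent.)

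The candidate set is then
\[
X_\mu \;:=\; \bigcap_{f\in\cD}\bigl(X_f\cap Y_f\bigr),
\]
a countable intersection of full-measure Borel sets, hence a Borel set with $\mu(X_\mu)=1$. For any $x\in X_\mu$, the hypotheses of Lemma \ref{lem:WWdense} are fulfilled on $\cD$: the Wiener--Wintner averages converge for every $f\in\cD$ and every $\xi\in\bS^1$, and \eqref{eq:integral_WW} holds for every $f\in\cD$. The lemma then upgrades these conditions from $\cD$ to all of $C(X)$ and concludes that $x$ is a Wiener--Wintner $\mu$-generic point.

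No step here is genuinely obstructive; the argument is the standard separability-plus-countable-intersection packaging of two classical ergodic theorems. The one point deserving care is ensuring that the Wiener--Wintner theorem supplies, for each fixed $f$, a \emph{single} full-measure set on which convergence holds uniformly across all $\xi\in\bS^1$ (an uncountable family of $\xi$-dependent full-measure sets could not be intersected), but this is exactly the form of the theorem recalled in the section.
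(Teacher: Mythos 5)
Your proof is correct and follows essentially the same route as the paper: fix a countable $\cD$ with $\clspn(\cD)=C(X)$, use the Wiener--Wintner theorem for each $f\in\cD$ to get a single full-measure set handling all $\xi\in\bS^1$ at once, intersect with a full-measure set of $\mu$-generic points (the paper uses Oxtoby's characterisation of ergodicity rather than invoking Birkhoff function-by-function, a purely cosmetic difference), and conclude via Lemma~\ref{lem:WWdense}. No gaps.
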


\begin{proof}
By ergodicity, 
there exists a Borel set $X_1$ consisting  of $\mu$ generic points with $\mu(X_1)=1$. 
Let $\mathcal{D}$ be a countable dense subset of $C(X)$. For every $f\in \mathcal{D}$ we use Wiener--Wintner theorem to find 
a Borel set $X_f$ with $\mu(X_f)=1$ such that for every 
 $x\in X_f$ and $\xi\in\bS^1$ the limit \eqref{eq:WW} exists. Therefore, for every \[x\in X_\mu=X_1\cap\bigcap_{f\in D}X_f\]
we have that \eqref{eq:integral_WW} holds for every $f\in\cD$ and the limit \eqref{eq:WW} exists for every $\xi\in\bS^1$ and $f\in \cD$. By Lemma~\ref{lem:WWdense} the same holds for every $f\in C(X)$. 
\end{proof}

\begin{rem}\label{rem:transl_inv}
It is easy to see that for every $k\in \Z$ and $\xi\in\bS^1$ the set of accumulation points of sequences
\[
\frac1n\sum_{j=0}^{n-1} f(T^j(x))\xi^{-j}\quad\text{and}\quad \frac1n\sum_{j=k}^{n+k-1} f(T^j(x))\xi^{-j}
\]
coincide. It follows that one of these sequences converges if and only if the other one converges. In that case, the limits coincide.
\end{rem}

The following lemma demonstrates that generic points can detect the eigenvalues of the Koopman operator.

\begin{lem}\label{lem:freq_sub_eig} 
Let $\mu\in\MT(X)$, $\xi\in \bS^1$ and $x\in X$ be a $\mu$-generic point. Then either $\Av[f,\bar{\xi}](x)$ exists and equals $0$ for every $f\in C(X)$ or $\xi$ is an eigenvalue of the Koopman operator $U_T$ on $L^2(X,\mu)$ and there exists $e_\xi\in L^2(X,\mu)$ that is a corresponding eigenfunction satisfying $\norm{e_\xi}_2\leq 1$ and
\begin{equation}\label{eq:av_proj}
\Av[f,\bar{\xi}](x)=\langle f,e_\xi\rangle=\int f\bar{e}_\xi\text{ d}\mu,
\end{equation}
provided $f\in C(X)$ is such that $\Av[f,\bar{\xi}](x)$ exists.
\end{lem}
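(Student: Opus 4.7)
The plan is to extract a subsequential limit of the twisted averages $\Av_n[\cdot,\bar\xi](x)$ as a continuous linear functional $\Psi$ on $L^2(X,\mu)$, invoke the Riesz representation theorem to obtain a candidate $e_\xi$, and verify that $e_\xi$ is a $\xi$-eigenfunction of $U_T$ via the translation identity underlying Remark \ref{rem:transl_inv}. The dichotomy is then decided by whether $\xi$ belongs to the point spectrum of $U_T$.

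Cauchy--Schwarz and $\mu$-genericity applied to $|f|^2 \in C(X)$ give $\limsup_n |\Av_n[f,\bar\xi](x)| \le \|f\|_{L^2(\mu)}$ for every $f \in C(X)$. Fix a countable sup-norm-dense $\cD \subseteq C(X)$. A diagonal extraction from the bounded scalar sequences $(\Av_n[g,\bar\xi](x))_n$ with $g \in \cD$, together with Lemma \ref{WWP_subset}, produces a subsequence $(n_k)$ along which $\Psi(f) := \lim_{k\to\infty}\Av_{n_k}[f,\bar\xi](x)$ exists for every $f \in C(X)$ and defines a continuous linear functional satisfying $|\Psi(f)| \le \|f\|_{L^2(\mu)}$. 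Since $C(X)$ is $L^2(\mu)$-dense, $\Psi$ extends uniquely to a continuous functional on $L^2(X,\mu)$, and Riesz delivers $e_\xi \in L^2(X,\mu)$ with $\|e_\xi\|_2 \le 1$ and $\Psi(f) = \langle f, e_\xi\rangle$. A direct calculation, as behind Remark \ref{rem:transl_inv}, gives
\[
\Av_n[f\circ T,\bar\xi](x) = \xi\,\Av_n[f,\bar\xi](x) + O(1/n),
\]
so passing to the limit along $(n_k)$ yields $\Psi(U_T f) = \xi\,\Psi(f)$ on $C(X)$ and hence on $L^2(X,\mu)$ by density. Rewriting as $\langle f, U_T^{-1} e_\xi\rangle = \langle f, \bar\xi\, e_\xi\rangle$ for all $f \in L^2(X,\mu)$ forces $U_T^{-1} e_\xi = \bar\xi\, e_\xi$, equivalently $U_T e_\xi = \xi e_\xi$.

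For the dichotomy: if $\xi$ is not an eigenvalue, then $\ker(U_T - \xi\, \text{Id}) = \{0\}$, and the construction above, applied to any subsequence of $(\Av_n[f,\bar\xi](x))_n$, yields $e_\xi = 0$ and thus $\Psi \equiv 0$. Since the original sequence is bounded and every subsequential limit vanishes, the full limit exists and equals zero for every $f \in C(X)$, which is the first alternative. If instead $\xi$ is an eigenvalue, the element $e_\xi$ produced by any one extraction is a $\xi$-eigenfunction with $\|e_\xi\|_2 \le 1$, and whenever $\Av[f,\bar\xi](x)$ exists the full limit coincides with the subsequential limit $\Psi(f) = \langle f, e_\xi\rangle$, which is the second alternative.

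The main subtlety is the asymmetry of the dichotomy: the first alternative requires $\Av[f,\bar\xi](x)$ to \emph{exist} for every $f \in C(X)$, not merely vanish where it happens to exist. The subsequence-plus-Riesz construction is precisely what bridges this gap, since any cluster value of $\Av_n[f,\bar\xi](x)$ that fails to be zero must produce, via Riesz and the translation identity, a nonzero element of the $\xi$-eigenspace of $U_T$.
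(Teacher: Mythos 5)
Your analytic core---the Cauchy--Schwarz bound via genericity of $|f|^2$, the diagonal extraction over a countable sup-norm-dense $\cD$ combined with Lemma~\ref{WWP_subset}, the Riesz representation on $L^2(X,\mu)$, and the translation identity yielding $U_T e_\xi=\xi e_\xi$---is exactly the paper's argument, and the non-eigenvalue half of your dichotomy is correct: every extraction produces a vector in $\ker(U_T-\xi\,\mathrm{Id})=\{0\}$, so every cluster value of the bounded sequence $(\Av_n[f,\bar\xi](x))_n$ vanishes and the full limit is $0$ for every $f$. The gap is in the other half. When $\xi$ is an eigenvalue you assert that ``the element $e_\xi$ produced by any one extraction is a $\xi$-eigenfunction.'' That does not follow: the extraction only gives $U_T e_\xi=\xi e_\xi$ together with $\norm{e_\xi}_2\le 1$, and $e_\xi$ depends on $x$ and on the chosen subsequence, not on the spectral data of $\mu$; it can be the zero vector even though $\ker(U_T-\xi\,\mathrm{Id})\neq\{0\}$ (for instance when all twisted averages at $x$ vanish, or when your unanchored diagonal extraction happens to land on a subsequence along which $\Psi\equiv 0$ while another subsequence gives a nonzero limit). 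In that situation you have established neither alternative of the lemma: not the first, because you have not shown that the limits exist for every $f\in C(X)$; not the second, because $e_\xi=0$ is not an eigenfunction---and if all averages exist and vanish while $\xi$ is an eigenvalue, the second alternative is actually false, since a representing eigenfunction would have to be orthogonal to all of $C(X)$. So the lemma's dichotomy is not ``$\xi$ is an eigenvalue versus not''; it is ``all twisted averages converge to $0$'' versus ``some $f_0$ has a nonzero cluster value.''

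The repair is precisely the mechanism you mention in your closing paragraph but never build into the case analysis (and it is how the paper proceeds): negate the first alternative, so there is $f_0\in C(X)$ whose averages either fail to converge or converge to a nonzero value; choose a subsequence along which $\Av_{n}[f_0,\bar\xi](x)\to\gamma\neq 0$, include $f_0$ in $\cD$, and run the diagonal extraction inside that subsequence. Then $\Psi(f_0)=\langle f_0,e_\xi\rangle=\gamma\neq 0$ forces $e_\xi\neq 0$, so $e_\xi$ is a genuine eigenfunction; this simultaneously proves that $\xi$ is an eigenvalue (instead of presupposing it), and the representation \eqref{eq:av_proj} for those $f$ whose full limit exists follows exactly as in your text, since the full limit must agree with the limit along the extracted subsequence.
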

\begin{proof} Fix $\mu\in\MT(X)$ and $\xi\in \bS^1$. Let $x\in X$ be a 
$\mu$-generic point such that for some $f_0\in C(X)$ we have that either the limit $\Av[f_0,\bar{\xi}](x)$ exists but it is not equal to $0$ or the limit $\Av[f_0,\bar{\xi}](x)$ does not exists. In any case, the sequence $(\Av_n[f_0,\bar{\xi}](x))^{\infty}_{n=1}$ has a subsequence converging to some $\gamma\in\C\setminus\{0\}$. Take $\cD$ to be a countable dense subset of $C(X)$ containing $f_0$. Let $(n_k)_{k=1}^\infty$ be the sequence of indices of that subsequence. Using a standard diagonal argument and passing to further subsequences of $(n_k)_{k=1}^\infty$ if necessary, we may assume that for every $f\in \cD$ the sequence
\[
\left(\frac{1}{n_k}\sum_{j=0}^{n_k-1} f(T^j(x))\xi^{-j}\right)_{k=1}^\infty
\]
converges as $k\to\infty$. For every $f\in \cD$ we define
\[
\Phi_\xi(f):=\lim_{k\to\infty}
\frac{1}{n_k}\sum_{j=0}^{n_k-1} f(T^j(x))\xi^{-j} \in \C.
\]
We use Lemma \ref{WWP_subset} to extend it to $\Phi_\xi\colon C(X)\to \C$.
In this way, we get a continuous linear transformation 
defined on $C(X)\subseteq L^2(X,\mu)$. 
By the Cauchy--Schwarz inequality, for every $n\ge 1$ and $f\in C(X)$ we have
\begin{multline*}
\left\lvert\frac1n\sum_{j=0}^{n-1} f(T^j(x))\xi^{-j}\right\rvert^2 \le \left(\frac1n\sum_{j=0}^{n-1} \lvert f(T^j(x))\rvert^2\right)\cdot\left( \frac1n\sum_{j=0}^{n-1} \lvert\xi^{-j}\rvert^2\right) = \frac1n\sum_{j=0}^{n-1} \lvert f(T^j(x))\rvert^2. 
\end{multline*}
Passing with $n\to \infty$ along $(n_k)_{k=1}^\infty$ and using that $x$ is $\mu$-generic, we obtain
\[
\left|\Phi_\xi(f)\right|^2=\lim_{k\to\infty}\left|\frac{1}{n_k}\sum_{j=0}^{n_k-1} f(T^j(x))\xi^{-j}\right|^2 \le \lim_{k\to\infty}\frac{1}{n_k}\sum_{j=0}^{n_k-1} |f(T^j(x))|^2=\int |f|^2\text{ d}\mu.
\]
This means that for every $f\in C(X)$ the linear map $\Phi_\xi$ satisfies 
\begin{equation}\label{eq:upperfonPhi}
 |\Phi_\xi(f)|^2\le ||f||^2_2, 
 \end{equation}
so it is a 
continuous linear functional on $C(X)$ endowed with the $L^2(X,\mu)$ norm. Now, by the Hahn-Banach Theorem (e.g. see \cite[Thm.~7.4.1.]{Narici}) 
we see that $\Phi_\xi$ extends to a  continuous linear functional on $L^2(X,\mu)$ with the same norm, in particular $\norm{\Phi_\xi}\le 1$. The density of $C(X)$ in $L^2(X,\mu)$ implies that the extension is unique and we keep denoting it by $\Phi_\xi$. By the Riesz representation theorem, there exists  a  unique $e_\xi\in L^2(X,\mu)$ such that $\norm{e_\xi}_2=\norm{\Phi_\xi}\le 1$ and for every $f\in L^2(X,\mu)$ we have
\begin{equation}
\Phi_\xi(f)=\langle f,e_\xi\rangle =\int f\bar e_\xi\text{ d}\mu.
    \label{eq:phif:product}
\end{equation} 
Clearly, $e_\xi\neq 0$, because $\Phi_\xi(f_0)=\langle f_0,e_\xi\rangle= \gamma\neq 0$. 
In order to show that $e_{\xi}$ is an eigenfunction of $U_T$  corresponding to the eigenvalue $\xi$ we need to show $e_\xi\circ T=\xi e_\xi$. Using continuity of the inner product and density of $C(X)$ in $L^2(X,\mu)$  it is enough to show that for every $f\in C(X)$ we have
\begin{equation}\label{eq:goal_1}
\langle
f,U_T(e_\xi)\rangle=
\langle
f,\xi e_\xi\rangle.    
\end{equation}
To this end, we fix $f\in C(X)$. Since $U_T$ is unitary we have
\begin{equation}
    \label{eq:prelim}
\langle
f,U_T(e_\xi)\rangle=
\langle
U^{-1}_T(f), e_\xi\rangle=\int (f\circ T^{-1})\bar e_\xi\text{ d}\mu=\Phi_\xi(f\circ T^{-1}).
\end{equation}

By the definition of $\Phi_\xi$ we have
\begin{equation}\label{eq:Phi_xi_T_inv}
    \Phi_\xi(f\circ T^{-1})=\lim_{k\to\infty}\frac{1}{n_k}\sum_{j=0}^{n_k-1} f\circ T^{-1}(T^j(x))\xi^{-j}.    
\end{equation}
Clearly, for every $n\ge 1$ it holds
\begin{equation}\label{eq:transformation}
    \frac1n\sum_{j=0}^{n-1} \left(f\circ T^{-1}(T^j(x))\right)\xi^{-j}=
    \xi^{-1} \frac1n\sum_{j=-1}^{n-2}\left( f\circ T^{j}(x)\right)\xi^{-j}.
\end{equation}
Using Remark \ref{rem:transl_inv} and noting that $\xi^{-1}=\bar\xi$ we obtain 
\begin{equation}
    \label{eq:bar-xi} \lim_{k\to\infty}\xi^{-1}\frac{1}{n_k}\sum_{j=-1}^{n_k-2}\left( f\circ T^{j}(x)\right)\xi^{-j}=\bar \xi \Phi_\xi(f).
\end{equation}

Combining 
\eqref{eq:prelim}, \eqref{eq:Phi_xi_T_inv}, \eqref{eq:transformation}, and \eqref{eq:bar-xi} we get
\[
\langle
f,U_T(e_\xi)\rangle=\Phi_\xi(f\circ T^{-1})=\bar \xi \Phi_\xi(f)=
\bar\xi \langle f,e_\xi\rangle=\langle
f,\xi e_\xi\rangle.    
\]
Hence,  \eqref{eq:goal_1} is true, so $\xi\in \Spec(X,T,\mu)$. If $A[f,\bar \xi](x)$ exists, then we have $\Phi_\xi(f)=A[f,\bar \xi](x)$, so \eqref{eq:av_proj} holds. 
\end{proof}

\begin{rem}
An alternative approach to the proof of Lemma \ref{lem:freq_sub_eig} was suggested to us by Mariusz Lemańczyk. It is based on \emph{intertwining Markov operators} associated with joinings of measure-preserving systems. Recall that a \emph{joining} of $(X,T,\mu)$ and $(Y,S,\nu)$ is a $T\times S$-invariant measure on $X\times Y$ whose projections, to the first, respectively, second coordinate is $\mu$, respectively, $\nu$. 
These operators were introduced by Vershik and since then had been used extensively, see, e.g. \cite{LPT00, LR03, ryzhikov94}. For more details, we refer to \cite{ryzhikov94} or \cite[Section 1.3]{LPT00}. 

Fix $\xi\in\bS^1$ and define a bounded arithmetic function $\mathbf{u}\colon \Z\to\C$ by $\mathbf{u}(n)=\xi^{n}$ for $n\in\Z$. Consider $\mathbf{u}$ as an element of the product space $(\bS^1)^{\Z}$. The \emph{Furstenberg system} $X_\xi$ associated with $\mathbf{u}$ is the closure of the orbit of $\mathbf{u}$ under the shift action on $(\bS^1)^{\Z}$. In other words, $X_\xi:=\overline{\{S^k(\mathbf{u}):k\in \Z\}}$, where $S\colon (\bS^1)^{\Z}\to (\bS^1)^{\Z}$ is the left-shift operator acting on $(x_k)_{k\in\Z}$ by $S((x_k)_{k\in\Z})=(x_{k+1})_{k\in\Z}$. The topological dynamical system $(X_\xi,S)$ is topologically conjugated to the closure of the orbit of the point $1\in\bS^1$ under the action of the rotation of $\bS^1$ by $\xi$. In particular, $(X_\xi,S)$ admits an unique ergodic $S$-invariant measure denoted by $\lambda_\xi$.

Let $x\in X$ be a generic point for $\mu$. Consider the point $(x,\mathbf{u})$ in the product topological dynamical system $(X\times (\bS^1)^\Z, T\times S)$. There is a strictly increasing sequence of positive integers $(n_k)_{k=1}^\infty$ such that there is a measure $\rho_\xi$ defined by
\[
\lim_{k\to \infty}\frac{1}{n_k}\sum_{j=0}^{n_k-1}\delta_{(S^j(\mathbf{u}),T^j(x))}=\rho_\xi. 
\]
It is easy to see that $\rho_\xi$ is a joining of $\lambda_\xi$ and $\mu$. Consequently, there exists an associated Markov operator $\phi_\rho\colon L^2(X_\xi,\lambda_\xi)\to L^2(X,\mu)$ 
defined for $g\in L^2(X_\xi,\lambda_\xi)$ and $f\in L^2(X,\mu)$ by 
\[\langle \phi_\rho(g),f\rangle=\int_{X_\xi\times X} g(v)\overline{f(y)}\text{ d}\rho_\xi(v,y).
\]
The Markov operator is intertwining, that is 
\begin{equation}\label{eq:intertwining_Markov}
\phi_\rho\circ U_S=U_T\circ \phi_\rho.
\end{equation}

The projection onto the zeroth coordinate, $\pi_0\colon (\bS^1)^\Z\to \bS^1$, is an eigenfunction of $(X_\xi,S,\lambda_\xi)$ (corresponding to the eigenvalue $\xi$). It follows immediately from the intertwining relationship  \eqref{eq:intertwining_Markov} that $\phi_\rho(\pi_0)$, if non-zero, is an eigenfunction of $(X,T,\mu)$ also corresponding to the eigenvalue $\xi$. 
Hence for every $f\in C(X)$ and $x$ as above we have 
\begin{multline*}
\Av[f,\bar\xi](x)=\lim_{k\to \infty}\frac{1}{n_k}\sum_{j=0}^{n_k-1}f(T^j(x))\xi^{-j}=\\=\int_{X_\xi\times X} f(y) \overline{\pi_0(v)}\text{ d}\rho_\xi(v,y) =\int_X f\overline{\phi_\rho(\pi_0)}d\mu.    
\end{multline*}
Therefore, the function $e_\xi$ in Lemma \ref{lem:freq_sub_eig} is precisely the image of the eigenfunction of $(X_\xi,S,\lambda_\xi)$ through the Markov operator associated with the joining $\rho_\xi$. 
\end{rem}

Assumptions of 
Lemma \ref{lem:freq_sub_eig} suggests the definition of frequency presented below. We are inspired by
D.~Lenz, T.~Spindeler, and N.~Strungaru \cite{LSS}, who considered similarly defined notion of frequency. For $G=\Z$ every frequency in the sense of \cite[Definition 3.4]{LSS} is also a frequency in the sense of Definition \ref{defn:Freq}.

\begin{defn}\label{defn:Freq}
We say that $\xi\in\bS^1$  
is a \emph{frequency} of $x\in X$ if there is $f\in C(X)$ such that \[
\limsup_{n\to\infty}\left|\Av_n[f,\bar\xi ](x)\right|=\limsup_{n\to\infty}\left|\frac{1}{n}\sum_{j=0}^{n-1} f(T^j(x))\xi^{-j}\right|>0.
\]
We denote the set of all frequencies of $x$ by $\Freq(x)$.
\end{defn}

\begin{cor}\label{cor:Avg_is_zero}
If $x\in X$ is a $\mu$-generic point, then 
$\Freq(x)\subseteq\Spec(X,T,\mu)$.  Furthermore, if $\xi\in\bS^1\setminus \Spec(X,T,\mu)$  and $f \in C(X)$, then $\Av[f,\bar\xi](x)$ exists and $\Av[f,\bar\xi](x)=0$.
\end{cor}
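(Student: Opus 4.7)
The plan is to derive both statements directly from the dichotomy provided by Lemma \ref{lem:freq_sub_eig}. I would begin with the second assertion, since the first will follow by contraposition.

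Fix $\xi\in\bS^1\setminus\Spec(X,T,\mu)$. Applying Lemma \ref{lem:freq_sub_eig} to $\mu$, $\xi$, and the $\mu$-generic point $x$, we face two mutually exhaustive alternatives: either $\Av[f,\bar\xi](x)$ exists and equals $0$ for every $f\in C(X)$, or $\xi$ is an eigenvalue of $U_T$ on $L^2(X,\mu)$. Since the assumption $\xi\notin\Spec(X,T,\mu)$ excludes the second alternative, the first must hold, which is exactly the conclusion of the second claim.

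For the first assertion, I would argue contrapositively. Suppose $\xi\in\bS^1\setminus\Spec(X,T,\mu)$. By the statement just proved, for every $f\in C(X)$ the limit $\Av[f,\bar\xi](x)$ exists and equals $0$, hence
\[
\limsup_{n\to\infty}\left|\frac{1}{n}\sum_{j=0}^{n-1}f(T^j(x))\xi^{-j}\right|=0
\]
for every $f\in C(X)$. By Definition \ref{defn:Freq} this means $\xi\notin\Freq(x)$. Contraposing, $\Freq(x)\subseteq\Spec(X,T,\mu)$.

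There is no real obstacle here; the entire content is packaged in Lemma \ref{lem:freq_sub_eig}, and the corollary amounts to reading off its two cases under the additional information on $\xi$. The only point worth double-checking when writing the proof is that the dichotomy in the lemma is genuinely exhaustive, so that ruling out the eigenvalue case forces the vanishing of all averages, rather than leaving open the possibility that some $\Av[f,\bar\xi](x)$ fails to exist.
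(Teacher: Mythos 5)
Your proposal is correct and is exactly the argument the paper intends: the corollary is stated without proof as an immediate consequence of Lemma \ref{lem:freq_sub_eig}, whose dichotomy (its proof derives the eigenvalue alternative from the negation of ``$\Av[f,\bar\xi](x)$ exists and equals $0$ for all $f\in C(X)$'', so the alternatives are indeed exhaustive) gives the ``furthermore'' part, and the inclusion $\Freq(x)\subseteq\Spec(X,T,\mu)$ then follows by contraposition just as you write.
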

\begin{rem}\label{rem:Avg_is_zero}
Note that the topological Wiener--Wintner theorem due to Walters \cite[Corollary 2]{Walters96} (see also \cite[Theorem 3.5]{Weiss}) implies that if $\mu$ is ergodic, then for $\xi\in\bS^1$ the following conditions are equivalent:
\begin{enumerate}
    \item\label{imp-A} for every $\mu$-generic point $x\in X$  and every $f\in C(X)$ we have that $\Av[f,\bar\xi](x)$ exists and  $\Av[f,\bar\xi](x)=0$;
    \item\label{imp-B} $\xi\notin\Spec(X,T,\mu)$.
\end{enumerate}
Therefore, Corollary \ref{cor:Avg_is_zero} strengthens the implication \eqref{imp-B}$\implies$\eqref{imp-A} by removing the assumption that $\mu$ is ergodic. It follows that if $\mu$ is weakly mixing, then every $\mu$-generic point is Fourier--Bohr $\mu$-generic.
\end{rem}
The next result is an immediate consequence of Lemma \ref{lem:freq_sub_eig} and Remark \ref{rem:Avg_is_zero}. 
\begin{cor}\label{cor:eq:Xi}
Let $\mu\in\MT(X)$ and $x\in X$ be a $\mu$-generic point. Then $x$ is a Fourier--Bohr $\mu$-generic point if and only if for  every $f\in C(X)$ and $\xi\in\Spec(X,T,\mu)$ the sequence $(\Av_n[f,\bar\xi](x))_{n=1}^\infty$ converges to $\Av[f,\bar{\xi}](x)$. If any of these conditions hold, then for  every $f\in C(X)$ we have
\begin{equation}\label{eq:Xi}
\sum_{\xi\in\bS^1}\lvert \Av[f,\bar{\xi}](x)\rvert^2=\sum_{\xi\in\Spec(X,T,\mu)}\lvert \Av[f,\bar{\xi}](x)\rvert^2 =\sum_{\xi\in\Freq(x)}\lvert \Av[f,\bar{\xi}](x)\rvert^2.  
\end{equation}
\end{cor}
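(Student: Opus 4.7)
The plan is to deduce both claims directly from Corollary \ref{cor:Avg_is_zero} combined with the definition of Wiener--Wintner genericity and of $\Freq(x)$. Essentially the corollary reduces everything to careful bookkeeping over three disjoint parts of $\bS^1$: the complement of the spectrum, the spectrum minus $\Freq(x)$, and $\Freq(x)$ itself.

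For the equivalence, the forward implication is immediate from the definition of Wiener--Wintner $\mu$-generic point, since convergence for every $\xi\in\bS^1$ is stronger than convergence for every $\xi\in\Spec(X,T,\mu)$. For the converse, I would assume that $(\Av_n[f,\bar\xi](x))_{n=1}^\infty$ converges for every $f\in C(X)$ and every $\xi\in\Spec(X,T,\mu)$, and then invoke Corollary \ref{cor:Avg_is_zero} to handle the remaining $\xi\in\bS^1\setminus\Spec(X,T,\mu)$: for such $\xi$ the corollary already asserts that $\Av[f,\bar\xi](x)$ exists and equals $0$ for every $f\in C(X)$. Putting the two cases together gives convergence for all $\xi\in\bS^1$, which is precisely Wiener--Wintner $\mu$-genericity.

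For the chain of equalities \eqref{eq:Xi}, I would first justify that the displayed sums make sense: by Corollary \ref{cor:Avg_is_zero} we have $\Freq(x)\subseteq\Spec(X,T,\mu)$, and $\Spec(X,T,\mu)$ is countable because eigenfunctions of $U_T$ corresponding to distinct eigenvalues are orthogonal in the separable space $L^2(X,\mu)$. The first equality then follows immediately, since for any $\xi\in\bS^1\setminus\Spec(X,T,\mu)$ the term $\Av[f,\bar\xi](x)=0$ by Corollary \ref{cor:Avg_is_zero}, so only $\xi\in\Spec(X,T,\mu)$ can contribute. For the second equality, I would examine an arbitrary $\xi\in\Spec(X,T,\mu)\setminus\Freq(x)$: by the very definition of $\Freq(x)$, $\xi\notin\Freq(x)$ means that $\limsup_{n\to\infty}|\Av_n[f,\bar\xi](x)|=0$ for \emph{every} $f\in C(X)$, which forces $\Av[f,\bar\xi](x)=0$. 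Hence removing such $\xi$ from the middle sum does not change its value.

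No real obstacle is anticipated; the whole argument is a short deduction from previously established facts. The only point deserving care is that the uncountable-looking sum $\sum_{\xi\in\bS^1}$ is in fact supported on the countable set $\Freq(x)$, so all three expressions in \eqref{eq:Xi} are unambiguously defined unordered sums of nonnegative reals.
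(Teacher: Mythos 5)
Your proof is correct and takes essentially the same route as the paper, which states this corollary without a written proof as an immediate consequence of Lemma \ref{lem:freq_sub_eig} and Remark \ref{rem:Avg_is_zero} --- precisely the ``averages exist and vanish off the spectrum'' fact you invoke via Corollary \ref{cor:Avg_is_zero}, together with the definition of $\Freq(x)$ to kill the terms in $\Spec(X,T,\mu)\setminus\Freq(x)$. Your additional remark on countability of $\Spec(X,T,\mu)$ (orthogonality of eigenfunctions in the separable space $L^2(X,\mu)$, valid without ergodicity) is a correct and welcome clarification of why the sums in \eqref{eq:Xi} are well defined.
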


\begin{cor}\label{cor:Kron-Xi}
Let $\mu\in \MT(X)$, and $x\in X$ be generic for $\mu$. For every $\xi\in \Freq(x)$ let us assume that $A[f,\bar\xi](x)$ exists for every $f\in C(X)$ and take $e_\xi\in L^2(X,\mu)$ to be the function provided by Lemma \ref{lem:freq_sub_eig} so that \eqref{eq:av_proj} holds. If $f\in C(X)$, then 
\begin{equation}\label{ineq:Kron-Xi}
\lVert P_{\Kron}(f)\rVert^2_{2}\ge \sum_{\xi \in \Freq(x)} 
|\langle f, e_\xi\rangle |^2=\sum_{\xi\in\Freq(x)}\lvert \Av[f,\bar{\xi}](x)\rvert^2.    
\end{equation}
\end{cor}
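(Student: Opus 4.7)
The plan is to reduce the stated inequality to Bessel's inequality applied to a suitable orthonormal system inside the Kronecker factor $\Kron$.

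First, I would dispense with the right-hand equality immediately: by the hypothesis that $\Av[f,\bar\xi](x)$ exists for every $f\in C(X)$, the identity $\Av[f,\bar\xi](x)=\langle f,e_\xi\rangle$ is exactly \eqref{eq:av_proj} from Lemma~\ref{lem:freq_sub_eig}, so
\[
\sum_{\xi\in\Freq(x)}|\langle f,e_\xi\rangle|^2=\sum_{\xi\in\Freq(x)}|\Av[f,\bar\xi](x)|^2.
\]

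Next, I would establish the key structural facts about the family $\{e_\xi:\xi\in\Freq(x)\}$. By Lemma~\ref{lem:freq_sub_eig}, each $e_\xi$ is an eigenfunction of $U_T$ corresponding to the eigenvalue $\xi$ with $\|e_\xi\|_2\le 1$; moreover, since $\xi\in\Freq(x)$ means that there exists $f_0\in C(X)$ with $\Av[f_0,\bar\xi](x)=\langle f_0,e_\xi\rangle\ne 0$, we actually have $0<\|e_\xi\|_2\le 1$. Now I would invoke the standard fact that eigenfunctions of the unitary operator $U_T$ corresponding to distinct eigenvalues are orthogonal: if $U_Te_\xi=\xi e_\xi$ and $U_T e_\eta=\eta e_\eta$ with $\xi\ne\eta$, then $\langle e_\xi,e_\eta\rangle=\langle U_Te_\xi,U_Te_\eta\rangle=\xi\bar\eta\langle e_\xi,e_\eta\rangle$, and since $\xi\bar\eta\ne 1$, we get $\langle e_\xi,e_\eta\rangle=0$. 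In particular, the family $\{e_\xi:\xi\in\Freq(x)\}$ is orthogonal in $L^2(X,\mu)$, and since $\Freq(x)$ is a subset of the countable set of eigenvalues of $U_T$ in a separable Hilbert space, this family is countable.

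Finally, I would apply Bessel's inequality. Set $\tilde e_\xi=e_\xi/\|e_\xi\|_2$ for each $\xi\in\Freq(x)$; this is an orthonormal system contained in $\Kron$ (since each $e_\xi$ is an eigenfunction). Bessel's inequality, combined with the fact that $\langle f,\tilde e_\xi\rangle=\langle P_\Kron(f),\tilde e_\xi\rangle$, yields
\[
\sum_{\xi\in\Freq(x)}\frac{|\langle f,e_\xi\rangle|^2}{\|e_\xi\|_2^2}=\sum_{\xi\in\Freq(x)}|\langle f,\tilde e_\xi\rangle|^2\le \|P_\Kron(f)\|_2^2.
\]
Since $\|e_\xi\|_2\le 1$, each summand on the left dominates $|\langle f,e_\xi\rangle|^2$, and the inequality \eqref{ineq:Kron-Xi} follows.

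There is no real obstacle here: the main subtlety is simply keeping track of the normalization so that the bound $\|e_\xi\|_2\le 1$ is exploited in the right direction, and verifying that $\Freq(x)$ is countable (so that the sums and Bessel's inequality make sense as written), which follows from $\Freq(x)\subseteq\Spec(X,T,\mu)$ via Corollary~\ref{cor:Avg_is_zero} together with separability of $L^2(X,\mu)$.
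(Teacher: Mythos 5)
Your proof is correct and takes essentially the same route as the paper's: normalise the eigenfunctions $e_\xi/\lVert e_\xi\rVert_2$, note they form an orthonormal system inside $\Kron$, apply Bessel's inequality to $P_{\Kron}(f)$, and use $\lVert e_\xi\rVert_2\le 1$ together with \eqref{eq:av_proj} to conclude. The only difference is that you spell out the orthogonality of eigenfunctions for distinct eigenvalues and the nonvanishing of $e_\xi$, which the paper leaves implicit.
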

\begin{proof}
It follows from Lemma  \ref{lem:freq_sub_eig} that the vectors $\{e_\xi/\norm{e_\xi}:\xi\in\Freq(x)\}$ are orthonormal and can be extended to an orthonormal basis of $\Kron$. Furthermore, for every $\xi\in \Freq(x)$ it holds
\begin{equation}\label{eq:fxisum}
|\langle f, e_\xi\rangle |^2=\norm{e_\xi}^2\left\lvert\langle f, \frac{e_\xi}{\norm{e_\xi}}\rangle \right\rvert^2\le |\langle f, \frac{e_\xi}{\norm{e_\xi}}\rangle |^2.
\end{equation}
Therefore
\[
\sum_{\xi\in\Freq(x)}\lvert \Av[f,\bar{\xi}](x)\rvert^2=\sum_{\xi \in \Freq(x)} 
|\langle f, e_\xi\rangle |^2\le \sum_{\xi \in \Freq(x)} 
|\langle f, \frac{e_\xi}{\norm{e_\xi}}\rangle |^2\le \lVert P_{\Kron}(f)\rVert^2_{2}. \qedhere \]
\end{proof}

\section{Frequencies of Fourier--Bohr Generic Points and Spectra of Measures}

In this section, we will introduce and characterise Wiener--Wintner $\mu$-generic points. Then we will connect the existence of Wiener--Wintner generic points with ergodicity and the spectrum of the associated measure. Corollary \ref{cor:Kron-Xi} suggests that we distinguish those Fourier--Bohr $\mu$-generic points for which \eqref{ineq:Kron-Xi} becomes an equality.

\begin{defn}\label{def:reg_WW_gen}Let $\mu\in\MT(X)$. 
A Fourier--Bohr $\mu$-generic point $x\in X$ is \emph{Wiener--Wintner $\mu$-generic} if for every $f\in C(X)$ it holds
\begin{equation}
    \label{eq:reg-wwg}
    \norm{P_{\Kron}(f)}^2_2=
    \sum_{\xi\in\Freq(x)}
    \lvert \Av[f,\bar{\xi}](x)\rvert^2.
\end{equation}
\end{defn}

\begin{lem}\label{lem:freq=spec}
Let $\mu\in \MT(X)$ and $x\in X$ be a $\mu$-generic point. If the set
\[E=\{e_\xi:\xi\in\Freq(x)\},\]
where $e_\xi\in L^2(X,\mu)$ is the eigenfunction of $U_T$ provided by Lemma \ref{lem:freq_sub_eig} is an orthonormal basis of $\Kron$,  then $\mu$ is ergodic and $\Freq(x)=\Spec(X,T,\mu)$.
\end{lem}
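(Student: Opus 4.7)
The plan is to first establish the identity $\Freq(x)=\Spec(X,T,\mu)$ and then deduce ergodicity of $\mu$ from the fact that each eigenspace, in particular the one corresponding to the eigenvalue $1$, must be one-dimensional.

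The inclusion $\Freq(x)\subseteq\Spec(X,T,\mu)$ is already provided by Corollary \ref{cor:Avg_is_zero}, so only the reverse inclusion needs work. I would fix an arbitrary $\xi\in\Spec(X,T,\mu)$ together with a nonzero eigenfunction $g\in L^2(X,\mu)$ of $U_T$ corresponding to $\xi$. By definition $g\in\Kron$, so since $E$ is assumed to be an orthonormal basis of $\Kron$, we may expand $g=\sum_{\eta\in\Freq(x)}\langle g,e_\eta\rangle e_\eta$. For every $\eta\in\Freq(x)$ with $\eta\neq\xi$, the functions $g$ and $e_\eta$ are eigenfunctions of the unitary operator $U_T$ corresponding to distinct eigenvalues and are therefore orthogonal, so $\langle g,e_\eta\rangle=0$. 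If $\xi$ were not in $\Freq(x)$, the entire expansion would collapse to $0$, forcing $g=0$ and contradicting its choice. Hence $\xi\in\Freq(x)$, which closes the identity.

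For ergodicity I would invoke the standard criterion that a $T$-invariant probability measure is ergodic if and only if the eigenspace of $U_T$ corresponding to the eigenvalue $1$ is one-dimensional (equivalently, consists only of constants). Since $\mu$ is a probability measure, the constant function $\mathbf{1}$ is a nonzero eigenfunction for the eigenvalue $1$, so $1\in\Spec(X,T,\mu)$, and by the previous paragraph $1\in\Freq(x)$. Running the same expansion argument once more, any eigenfunction for $1$ is a scalar multiple of $e_1$, so the eigenspace for $1$ is spanned by $e_1$ and $\mu$ is ergodic.

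The plan reduces to two routine facts about a unitary operator on a Hilbert space: eigenfunctions corresponding to distinct eigenvalues are orthogonal, and an element of $\Kron$ is determined by its coefficients in any orthonormal basis of $\Kron$. I do not anticipate a genuine obstacle; the only item that deserves a second glance is the invocation of the ergodicity criterion in terms of simplicity of the eigenvalue $1$, which must be formulated for arbitrary (not a priori ergodic) $\mu\in\MT(X)$ to keep the argument noncircular.
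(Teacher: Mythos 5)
Your proposal is correct and follows essentially the same route as the paper: the reverse inclusion $\Spec(X,T,\mu)\subseteq\Freq(x)$ is obtained from the orthogonality of eigenfunctions of $U_T$ for distinct eigenvalues (a hypothetical eigenvalue outside $\Freq(x)$ would give an eigenfunction orthogonal to all of $\Kron$ yet lying in $\Kron$), and ergodicity follows because the same orthogonality forces each eigenspace, in particular $\ker(U_T-\mathrm{Id})$, to be one-dimensional, so all invariant functions are $\mu$-a.e.\ constant. The point you flag at the end is fine: the criterion that simplicity of the eigenvalue $1$ implies ergodicity holds for arbitrary $\mu\in\MT(X)$, and it is exactly what the paper invokes.
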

\begin{proof}
By Remark \ref{rem:Avg_is_zero}, we have $\Freq(x)\subseteq\Spec(X,T,\mu)$. Now, if there were $\hat{\xi}\in \Spec(X,T,\mu)\setminus\Freq(x)$, then the corresponding eigenfunction $\hat{e}$ would be orthogonal to the space spanned by $\{e_\xi \colon \xi\in \Freq(x)\}=\Kron$, which is impossible since $\hat{e}\in\Kron$. Furthermore, for each $\xi\in\Spec(X,T,\mu)=\Freq(x)$ the eigenspace $\ker(U_T-\xi\text{Id})$ is one-dimensional. In particular, the eigenspace corresponding to the eigenvalue $1$ is one-dimensional, which means that all $T$-invariant functions are $\mu$-a.e. constant. This implies ergodicity. 
\end{proof}

Lemma \ref{lem:equiv_regular_points} provides us with some useful characterisations of a Wiener--Wintner $\mu$-generic point. 
\begin{lem}\label{lem:equiv_regular_points}

Let $\mu\in \MT(X)$ and $x\in X$ be a generic point for $\mu$ such that for every $\xi\in \Freq(x)$ and $f\in C(X)$ the sequence $(\Av_n[f,\bar\xi](x))_{n=1}^\infty$ converges (i.e. $A[f,\bar\xi](x)$ exists). Let $e_\xi\in L^2(X,\mu)$ be the function provided by Lemma \ref{lem:freq_sub_eig}. 
Then the following conditions are equivalent: 
\begin{enumerate}
\item \label{c:i}The point $x$ is a Wiener--Wintner $\mu$-generic point.
\item \label{c:iii} For every $f\in C(X)$ we have $\displaystyle\lVert P_{\Kron}(f)\rVert_{2}^2=\sum_{\xi\in \Freq(x)}\lvert \langle f,e_\xi\rangle \rvert^2$. 
\item \label{c:iv} The set $\{e_\xi \colon \xi\in \Freq(x)\}$ forms an orthonormal basis for $\Kron$.
\end{enumerate}
\end{lem}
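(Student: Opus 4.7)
The equivalence (\ref{c:i}) $\Leftrightarrow$ (\ref{c:iii}) is essentially free: by Lemma \ref{lem:freq_sub_eig}, under the standing hypothesis on $x$ we have $\Av[f,\bar\xi](x)=\langle f,e_\xi\rangle$ for every $\xi\in\Freq(x)$ and every $f\in C(X)$, so the right-hand sides of (\ref{eq:reg-wwg}) and of the identity in (\ref{c:iii}) are literally the same expression. Thus the entire substance of the lemma is the equivalence (\ref{c:iii}) $\Leftrightarrow$ (\ref{c:iv}).

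For (\ref{c:iv}) $\Rightarrow$ (\ref{c:iii}), I would simply apply Parseval's identity inside the Hilbert space $\Kron$. Since $e_\xi\in\Kron$, we have $\langle f,e_\xi\rangle=\langle P_{\Kron}(f),e_\xi\rangle$ for every $f\in L^2(X,\mu)$, and if $\{e_\xi : \xi\in\Freq(x)\}$ is an orthonormal basis of $\Kron$ then Parseval gives
\[
\lVert P_{\Kron}(f)\rVert_2^2=\sum_{\xi\in\Freq(x)}|\langle P_{\Kron}(f),e_\xi\rangle|^2=\sum_{\xi\in\Freq(x)}|\langle f,e_\xi\rangle|^2.
\]

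The harder direction is (\ref{c:iii}) $\Rightarrow$ (\ref{c:iv}), where the issue is that the identity in (\ref{c:iii}) is only given for $f\in C(X)$ while the candidate basis vectors $e_\xi$ live in $L^2(X,\mu)$. The plan is to first extend the identity by continuity from $C(X)$ to all of $L^2(X,\mu)$: the left-hand side $f\mapsto \lVert P_{\Kron}(f)\rVert_2^2$ is obviously continuous, and the right-hand side is the square of a seminorm bounded by $\lVert f\rVert_2$ thanks to the inequality of Corollary \ref{cor:Kron-Xi} (combined with the bound $\lVert P_{\Kron}(f)\rVert_2\le\lVert f\rVert_2$), so it is continuous as well; density of $C(X)$ in $L^2(X,\mu)$ then does the job. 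Once (\ref{c:iii}) is available for all $f\in L^2(X,\mu)$, I would first test it with $f=e_{\xi_0}$ for an arbitrary $\xi_0\in\Freq(x)$: since eigenfunctions of $U_T$ for distinct eigenvalues of a unitary operator are orthogonal, only the $\xi=\xi_0$ term survives on the right, and since $e_{\xi_0}\in\Kron$ the left-hand side equals $\lVert e_{\xi_0}\rVert_2^2$, yielding $\lVert e_{\xi_0}\rVert_2^4=\lVert e_{\xi_0}\rVert_2^2$ and therefore $\lVert e_{\xi_0}\rVert_2=1$ (recall $e_{\xi_0}\ne 0$ by the definition of $\Freq(x)$ and Lemma \ref{lem:freq_sub_eig}). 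Together with the already known mutual orthogonality, this shows that $\{e_\xi:\xi\in\Freq(x)\}$ is an orthonormal system in $\Kron$. To upgrade it to a basis, I would take any $g\in\Kron$ orthogonal to every $e_\xi$ and apply the (now extended) identity to $f=g$: the right-hand side is zero, while the left is $\lVert g\rVert_2^2$, forcing $g=0$. The completeness we need then follows, and (\ref{c:iv}) holds.

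The only real obstacle is the density/continuity extension just described; the rest is a straightforward application of the Parseval/Bessel dichotomy for an orthonormal system inside a Hilbert space, combined with Lemma \ref{lem:freq_sub_eig} to identify $\Av[f,\bar\xi](x)$ with $\langle f,e_\xi\rangle$.
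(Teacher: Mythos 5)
Your proposal is correct, but it is organised differently from the paper's proof. The paper proves the cycle (\ref{c:i})$\Rightarrow$(\ref{c:iii})$\Rightarrow$(\ref{c:iv})$\Rightarrow$(\ref{c:i}): for (\ref{c:iii})$\Rightarrow$(\ref{c:iv}) it stays entirely inside $C(X)$, squeezing $\sum_{\xi}\norm{e_\xi}^2\lvert\langle f,e_\xi/\norm{e_\xi}\rangle\rvert^2$ between the hypothesis and Bessel's inequality to force $\norm{e_\xi}=1$, and then reading equality in Bessel as $P_{\Kron}(f)\in\clspn\{e_\xi:\xi\in\Freq(x)\}$ for every $f\in C(X)$, which by density of $C(X)$ in $L^2(X,\mu)$ gives $\clspn\{e_\xi\}=\Kron$; for (\ref{c:iv})$\Rightarrow$(\ref{c:i}) it first invokes Lemma \ref{lem:freq=spec} and Corollary \ref{cor:eq:Xi} to secure Wiener--Wintner genericity and then applies Parseval. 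You instead dispose of (\ref{c:i})$\Leftrightarrow$(\ref{c:iii}) by substituting \eqref{eq:av_proj}, and prove (\ref{c:iii})$\Rightarrow$(\ref{c:iv}) by extending the identity from $C(X)$ to all of $L^2(X,\mu)$ and testing it against $e_{\xi_0}$ (to get $\norm{e_{\xi_0}}=1$) and against vectors of $\Kron$ orthogonal to every $e_\xi$ (to get completeness). Both routes use the same ingredients (mutual orthogonality of the $e_\xi$, the bound $\norm{e_\xi}\le 1$, Bessel/Parseval, density of $C(X)$ in $L^2$); yours buys a clean test-vector finish at the cost of an extension step, while the paper avoids any extension by running the squeeze inside $C(X)$.

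Two small points should be spelled out. First, for continuity of the right-hand side you cite Corollary \ref{cor:Kron-Xi}, which is stated only for $f\in C(X)$; what the extension argument needs is the bound $\sum_{\xi\in\Freq(x)}\lvert\langle f,e_\xi\rangle\rvert^2\le\norm{f}_2^2$ for \emph{every} $f\in L^2(X,\mu)$, which follows directly from Bessel's inequality for the orthonormal system $\{e_\xi/\norm{e_\xi}\}$ together with $\norm{e_\xi}\le 1$ (or from lower semicontinuity of the sum as a supremum of finite partial sums); with that, your seminorm is $1$-Lipschitz and the density argument goes through. Second, condition (\ref{c:i}) also asserts that $x$ is a Wiener--Wintner $\mu$-generic point, i.e.\ that $\Av_n[f,\bar\xi](x)$ converges for every $\xi\in\bS^1$, not only for $\xi\in\Freq(x)$; your ``essentially free'' claim for (\ref{c:iii})$\Rightarrow$(\ref{c:i}) therefore needs the one-line remark that this is automatic under the standing hypothesis, since for $\xi\notin\Freq(x)$ the very definition of frequency forces $\Av_n[f,\bar\xi](x)\to 0$ for every $f\in C(X)$. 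This is exactly the point the paper discharges via Corollary \ref{cor:eq:Xi} in its (\ref{c:iv})$\Rightarrow$(\ref{c:i}) step; once both remarks are added, your argument is complete.
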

\begin{proof} 
\eqref{c:i} $\implies$ \eqref{c:iii}
It follows from substituting \eqref{eq:av_proj} into \eqref{eq:reg-wwg}.%
\eqref{c:iii} $\implies$ \eqref{c:iv} 
Let $f\in C(X)$. By our assumption and $\norm{e_\xi}\le 1$ 
we have 
\begin{multline*}
    \lVert P_{\Kron}(f)\rVert_{2}^2=\sum_{\xi\in \Freq(x)}|\langle f, e_\xi\rangle |^2=\sum_{\xi\in \Freq(x)}\norm{e_\xi}^2\left\lvert\langle f, \frac{e_\xi}{\norm{e_\xi}}\rangle \right\rvert^2\le\\
\sum_{\xi\in \Freq(x)}\left\lvert\langle f, \frac{e_\xi}{\norm{e_\xi}}\rangle \right\rvert^2
\le \lVert P_{\Kron}(f)\rVert_{2}^2.
\end{multline*}
Hence, $\norm{e_\xi}^2=1$ for every $\xi\in\Freq(x)$. By Parseval's 
identity $P_\Kron(f)\in\spn\{e_\xi:\xi\in \Freq(x)\}$ for every $f\in C(X)$, so $\clspn\{e_\xi:\xi\in \Freq(x)\}=\Kron$ as needed.

\eqref{c:iv} $\implies$ \eqref{c:i}  By Lemma \ref{lem:freq=spec}, we have $\Freq(x)=\Spec(X,T,\mu)$. 
Therefore using our assumptions, we see that $\Av[f,\bar{\xi}](x)$ exists for every $\xi\in \Spec(X,T,\mu)$ and $f\in C(X)$. By Corollary \ref{cor:eq:Xi} the point $x$ is Fourier--Bohr $\mu$-generic. By Parseval's 
identity, for every $f\in C(X)$ it holds
\begin{equation}\label{eq:basis}
\norm{P_{\Kron}(f)}^2_2=\sum_{\xi\in \Freq(x)} \lvert \langle f, e_\xi \rangle \rvert^2.   
\end{equation}
Substituting \eqref{eq:av_proj} from Lemma \ref{lem:freq_sub_eig} into \eqref{eq:basis}, we get that $x$ is regular. 
\end{proof}
Note that it is enough to check the condition of regularity \eqref{eq:reg-wwg} only for countably many functions.
\begin{lem}\label{cor:dense-WWRG}
Let $\cD\subset C(X)$ be a countable set with  $\clspn(\cD)=C(X)$ 
and $\mu\in \MT(X)$. A point $x\in X$ is a Wiener--Wintner $\mu$-generic if and only if $x$ is a Fourier--Bohr $\mu$-generic and \eqref{eq:reg-wwg} is satisfied for every $f\in \cD.$
\end{lem}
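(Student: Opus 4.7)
The forward implication is immediate, so the real task is to verify \eqref{eq:reg-wwg} for every $f\in C(X)$ under the assumption that it holds for every $f\in\cD$. My plan is to bypass direct $f$-by-$f$ approximation and instead reformulate regularity in structural terms. By Lemma~\ref{lem:equiv_regular_points}, it suffices to show that $\{e_\xi:\xi\in\Freq(x)\}$ is an orthonormal basis of $\Kron$, where $e_\xi$ is the eigenfunction produced by Lemma~\ref{lem:freq_sub_eig}. Orthogonality is automatic: the $e_\xi$ are eigenvectors of the unitary operator $U_T$ associated with pairwise distinct eigenvalues $\xi\in\bS^1$. So what remains is (i) $\norm{e_\xi}_2=1$ for each $\xi\in\Freq(x)$, and (ii) $V:=\clspn\{e_\xi:\xi\in\Freq(x)\}$ equals $\Kron$.

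The key tool is the inequality chain
\[
\sum_{\xi\in\Freq(x)}|\langle f,e_\xi\rangle|^2
\;\le\; \sum_{\xi\in\Freq(x)}\frac{|\langle f,e_\xi\rangle|^2}{\norm{e_\xi}_2^{2}}
= \norm{P_V(f)}_2^2
\;\le\; \norm{P_\Kron(f)}_2^2,
\]
valid for every $f\in L^2(X,\mu)$; the first inequality uses $\norm{e_\xi}_2\le 1$ from Lemma~\ref{lem:freq_sub_eig}, while the last uses $V\subseteq\Kron$. For every $f\in C(X)$ satisfying \eqref{eq:reg-wwg}, the identification \eqref{eq:av_proj} makes the outermost terms coincide, forcing equality throughout. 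Equality of the middle and right terms gives $P_V(f)=P_\Kron(f)$, and equality of the left and middle terms forces $\norm{e_\xi}_2=1$ for every $\xi\in\Freq(x)$ with $\langle f,e_\xi\rangle\neq 0$.

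To propagate these two consequences from $\cD$ to the whole space, I would use that $\clspn(\cD)=C(X)$ in the sup-norm together with $\norm{\cdot}_2\le\norm{\cdot}_\infty$ and the standard fact that $C(X)$ is $L^2$-dense in $L^2(X,\mu)$ to conclude that $\spn(\cD)$ is $L^2$-dense in $L^2(X,\mu)$. Consequently, the bounded operators $P_V$ and $P_\Kron$, which coincide on $\cD$ by the previous step, coincide on $\spn(\cD)$ by linearity, and hence on all of $L^2(X,\mu)$ by $L^2$-continuity, giving $V=\Kron$. For the normalisation, fix $\xi_0\in\Freq(x)$. Since $e_{\xi_0}\neq 0$, $L^2$-density of $\spn(\cD)$ supplies some $f\in\cD$ with $\langle f,e_{\xi_0}\rangle\neq 0$ (otherwise $e_{\xi_0}$ would be orthogonal to a dense set); the previous paragraph then yields $\norm{e_{\xi_0}}_2=1$. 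With orthonormality and spanning in hand, Lemma~\ref{lem:equiv_regular_points} concludes the proof. The main subtlety is the interplay between the sup-norm, in which density of $\cD$ is assumed, and the $L^2$-norm, which is the natural setting for the projection identities; keeping the argument on the projection level avoids having to handle regularity one function at a time, which would be awkward since \eqref{eq:reg-wwg} is not preserved by linear combinations.
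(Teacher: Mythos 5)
Your argument is correct, and it fills in a proof that the paper itself omits (the lemma is stated without proof as a routine consequence of the preceding material). Your route is essentially the intended one: the forward direction is trivial, and for the converse you rerun the mechanism of Lemma~\ref{lem:equiv_regular_points} -- the chain $\sum_{\xi\in\Freq(x)}|\langle f,e_\xi\rangle|^2\le\sum_{\xi\in\Freq(x)}|\langle f,e_\xi/\norm{e_\xi}\rangle|^2=\norm{P_V(f)}_2^2\le\norm{P_\Kron(f)}_2^2$ -- for $f\in\cD$ only, then propagate by $L^2$-density of $\spn(\cD)$ to get $\norm{e_\xi}_2=1$ and $V=\Kron$, and close with the implication \eqref{c:iv}$\implies$\eqref{c:i}. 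All the individual steps check out: the hypothesis of Lemma~\ref{lem:equiv_regular_points} is met because $x$ is assumed Wiener--Wintner $\mu$-generic, the identification $\Av[f,\bar\xi](x)=\langle f,e_\xi\rangle$ for $\xi\in\Freq(x)$ is exactly \eqref{eq:av_proj}, the equality $\norm{P_V(f)}_2=\norm{P_\Kron(f)}_2$ with $V\subseteq\Kron$ does force $P_V(f)=P_\Kron(f)$, and the existence of some $f\in\cD$ with $\langle f,e_{\xi_0}\rangle\neq0$ follows from $e_{\xi_0}\neq0$ and density. One small remark on your closing comment: the direct function-by-function density argument is not actually blocked, because by Corollary~\ref{cor:Kron-Xi} the defect $f\mapsto\norm{P_\Kron(f)}_2^2-\sum_{\xi\in\Freq(x)}|\langle f,e_\xi\rangle|^2$ is a positive semidefinite quadratic form on $L^2(X,\mu)$, so its zero set is a closed linear subspace (Cauchy--Schwarz for the associated semi-inner product) and \eqref{eq:reg-wwg} does pass from $\cD$ to $\clspn(\cD)$ and then to all of $C(X)$; your projection-level formulation buys the same conclusion while additionally exhibiting the orthonormal basis, which is what Lemma~\ref{lem:freq=spec} and Theorem~\ref{thm:WW-reg-erg} consume downstream.
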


\begin{thm}\label{thm:WW-reg-erg}
Let  $\mu\in\MT(X)$. 
The following conditions are equivalent:
\begin{enumerate}
    \item \label{reg-erg-i} The measure $\mu$ is ergodic. 
    \item \label{reg-erg-ii} There exists a Borel set $\hat{X}_\mu$ with $
    \mu(\hat{X}_\mu)=1$ such that every $x\in \hat{X}_\mu$ is a Wiener--Wintner $\mu$-generic point.
 \item \label{reg-erg-iii} There exists a Wiener--Wintner $\mu$-generic point.  \end{enumerate}    
\end{thm}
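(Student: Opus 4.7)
The implication \eqref{reg-erg-ii}$\implies$\eqref{reg-erg-iii} is immediate, since a full $\mu$-measure set is non-empty. For \eqref{reg-erg-iii}$\implies$\eqref{reg-erg-i} I would invoke Lemma \ref{lem:equiv_regular_points}, whose condition \eqref{c:iv} says that the family $\{e_\xi:\xi\in\Freq(x)\}$ is an orthonormal basis of $\Kron$, and then apply Lemma \ref{lem:freq=spec} to conclude that $\mu$ is ergodic.

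The heart of the theorem is \eqref{reg-erg-i}$\implies$\eqref{reg-erg-ii}. Assume $\mu$ is ergodic. Then $L^2(X,\mu)$ is separable, each eigenspace of $U_T$ is one-dimensional, and $\Spec(X,T,\mu)$ is countable; for each $\xi\in\Spec(X,T,\mu)$ I fix an eigenfunction $\phi_\xi$ with $\norm{\phi_\xi}_2=1$, and ergodicity gives $|\phi_\xi|=1$ $\mu$-a.e. The family $\{\phi_\xi:\xi\in\Spec(X,T,\mu)\}$ is then an orthonormal basis of $\Kron$. Using Lemma \ref{lem:Full_set_WWRGP} I start with a full-measure set of Wiener--Wintner $\mu$-generic points and intersect it, over $\xi\in\Spec(X,T,\mu)$, with the full-measure set on which $|\phi_\xi|=1$; this countable intersection $\hat X_\mu$ still has $\mu(\hat X_\mu)=1$.

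Fix $x\in\hat X_\mu$ and $\xi\in\Spec(X,T,\mu)$. For any $f\in C(X)$, the Mean Ergodic Theorem applied to the unitary $\bar\xi\, U_T$ gives $\Av_n[f,\bar\xi]\to P_\xi(f)=\langle f,\phi_\xi\rangle\phi_\xi$ in $L^2(X,\mu)$; combined with the pointwise convergence guaranteed by Wiener--Wintner and the fact that $x$ is Wiener--Wintner $\mu$-generic, this identifies the pointwise limit on a full-measure set:
\[
\Av[f,\bar\xi](x)=\langle f,\phi_\xi\rangle\,\phi_\xi(x)=\langle f,\overline{\phi_\xi(x)}\,\phi_\xi\rangle.
\]
Comparing with the identity $\Av[f,\bar\xi](x)=\langle f,e_\xi\rangle$ of Lemma \ref{lem:freq_sub_eig}, using that this holds for $f$ in a countable dense subset of $C(X)$ and arguing by density, one obtains $e_\xi=\overline{\phi_\xi(x)}\,\phi_\xi$ in $L^2(X,\mu)$, and hence $\norm{e_\xi}_2=|\phi_\xi(x)|=1$. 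Since $e_\xi\neq 0$, there is some $f\in C(X)$ with $\Av[f,\bar\xi](x)=\langle f,e_\xi\rangle\neq 0$, so $\xi\in\Freq(x)$; together with Corollary \ref{cor:Avg_is_zero} this yields $\Freq(x)=\Spec(X,T,\mu)$. Thus $\{e_\xi:\xi\in\Freq(x)\}$ is an orthonormal basis of $\Kron$, and Lemma \ref{lem:equiv_regular_points} \eqref{c:iv}$\Rightarrow$\eqref{c:i} gives that $x$ is a regular Wiener--Wintner $\mu$-generic point.

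The main obstacle is the technical bookkeeping in the last paragraph: the exceptional sets in both the Wiener--Wintner and Mean Ergodic theorems depend on the function considered, and the eigenfunctions $\phi_\xi$ need not be continuous, so one must carefully approximate $\phi_\xi$ in $L^2(X,\mu)$ by elements of a countable dense $\cD\subset C(X)$ and use Birkhoff's theorem applied to $|f_n-\phi_\xi|$ to transfer the identification $\Av_n[\phi_\xi,\bar\xi](x)=\phi_\xi(x)$ (which holds trivially from $\phi_\xi\circ T=\xi\phi_\xi$ $\mu$-a.e.) to the pointwise value of $\Av[f,\bar\xi](x)$ for $f\in\cD$. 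Once this is done, passing through Lemma \ref{WWP_subset} and Lemma \ref{cor:dense-WWRG} yields the conclusion simultaneously for all $\xi\in\Spec(X,T,\mu)$ on the single full-measure set $\hat X_\mu$.
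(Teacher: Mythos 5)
Your proposal is correct and follows the paper's overall architecture: the same cycle of implications, with \eqref{reg-erg-iii}$\implies$\eqref{reg-erg-i} via Lemmas \ref{lem:equiv_regular_points} and \ref{lem:freq=spec}, and \eqref{reg-erg-i}$\implies$\eqref{reg-erg-ii} built from Lemma \ref{lem:Full_set_WWRGP}, countability of $\Spec(X,T,\mu)$, and a countable dense $\cD\subseteq C(X)$. The one genuine difference is how you obtain the key identity $\Av[f,\bar\xi](x)=\langle f,\phi_\xi\rangle\phi_\xi(x)$ on a full-measure set: the paper applies Birkhoff's theorem to $f\cdot\bar e_\xi$ and uses the pointwise relation $\overline{e_\xi(T^j(x))}=\xi^{-j}\bar e_\xi(x)$ along the orbit, which yields the regularity identity \eqref{eq:reg-wwg} directly for $f\in\cD$ and closes via Lemma \ref{cor:dense-WWRG}; you instead invoke von Neumann's mean ergodic theorem for $\bar\xi\,U_T$ and match the $L^2$ limit with the pointwise Wiener--Wintner limit, then identify $e_\xi=\overline{\phi_\xi(x)}\,\phi_\xi$ and finish through Lemma \ref{lem:equiv_regular_points}\eqref{c:iv}$\Rightarrow$\eqref{c:i}. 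Both mechanisms are sound; the paper's stays entirely at the pointwise level and only ever checks \eqref{eq:reg-wwg} on $\cD$, while yours has the small bonus of exhibiting each $e_\xi$ from Lemma \ref{lem:freq_sub_eig} explicitly as a unimodular multiple of a fixed normalised eigenfunction, so that $\norm{e_\xi}_2=1$ and the orthonormal-basis criterion applies immediately. One small ordering point: Lemma \ref{lem:freq_sub_eig} produces $e_\xi$ only in the alternative where not all averages vanish, so you should first observe that $\langle f,\phi_\xi\rangle\phi_\xi(x)\neq 0$ for some $f\in\cD$ (density of $\cD$ in $L^2(X,\mu)$ and $|\phi_\xi(x)|=1$), conclude $\xi\in\Freq(x)$, and only then compare with \eqref{eq:av_proj} to get $e_\xi=\overline{\phi_\xi(x)}\,\phi_\xi$ --- a harmless rearrangement, not a gap.
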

\begin{proof}
\eqref{reg-erg-i}$\implies$\eqref{reg-erg-ii}
For every $\xi\in\Spec(X,T,\mu)$, we pick a corresponding normalised eigenfunction $e_\xi\in L^2(X,\mu)$. By ergodicity, $e_\xi$ spans the eigenspace $\ker(U_T-\xi\text{Id})$. Hence, 
$\{e_\xi:\xi\in\Spec(X,T,\mu)\}$ is an orthonormal basis of $\Kron=\Kron(\mu)$. 
Thus, for every $f\in L^2(X,\mu)$ we have
\begin{equation}\label{eq:Kron}
 \norm{P_{\mathcal{\Kron}}(f)}^2_2=\sum_{\xi\in\Spec(X,T,\mu)}|\langle f,e_\xi\rangle|^2.
   \end{equation}
   
Let $X_\mu$ be the full measure set of Fourier--Bohr $\mu$-generic points obtained from  Lemma \ref{lem:Full_set_WWRGP}.
For every $\xi\in \Spec(X,T,\mu)$, let $X_\xi$ be a full measure set such that $|e_\xi(x)|=1$ for every $x\in X_\xi$. 
Pick a countable and dense set $\cD\subseteq C(X)$. For every $f\in\cD$ and $\xi\in\Spec(X,T,\mu)$ we use the ergodic theorem to find a full measure set $X_{\xi,f}\subseteq X_\xi$ such that for every $x\in X_{\xi,f}$ and for every $j\ge 1$ we have  $\overline{e_\xi(T^j(x))}=\xi^{-j}\bar{e}_\xi(x)$ and 
\[
\lim_{n\to\infty}\frac1n\sum_{j=0}^{n-1} f(T^j(x))\overline{e_\xi(T^j(x))} = \int f\bar{e}_\xi\text{ d}\mu= \langle f, e_\xi \rangle . 
\]
Furthermore, we note that for every $x\in X_{\xi,f}$ it holds
\[
\frac1n\sum_{j=0}^{n-1} f(T^j(x))\overline{e_\xi(T^j(x))}=\bar{e}_\xi(x)\frac1n\sum_{j=0}^{n-1} \xi^{-j}f(T^j(x)).
\]
Hence for every $x\in X_{\xi,f}$ we get
\begin{equation}\label{eq:Av-inner}
\Av[f,\bar\xi](x)=\lim_{n\to\infty}\frac1n\sum_{j=0}^{n-1} \xi^{-j}f(T^j(x))=\langle f, e_\xi\rangle e_\xi(x). 
\end{equation}

Since $|e_\xi(x)|=1$ for $x\in X_{\xi,f}$, by taking the absolute values and squaring the sides of the equation \eqref{eq:Av-inner}   we obtain
\begin{equation}
    \label{eq:squared}
|\Av[f,\bar\xi](x)|^2
=|\langle f,e_\xi\rangle|^2.
\end{equation}
Define 
\[
\hat{X}_{\mu}=X_{\mu}\cap \bigcap_{\xi\in\Spec(X,T,\mu)}\bigcap_{f\in\cD} X_{\xi,f}.
\]
Clearly, $\mu(\hat{X}_{\mu})=1$ so it remains to prove that each point in $\hat{X}_{\mu}\subseteq X_\mu$ is regular.
By \eqref{eq:Kron}, \eqref{eq:squared}, and Remark \ref{rem:Avg_is_zero} for every $x\in \hat{X}_{\mu}$ and every $f\in\cD$ we have
\[
 \norm{P_{\Kron}(f)}^2_2=\sum_{\xi\in\Spec(X,T,\mu)}|\langle f,e_\xi\rangle|^2=
 \sum_{\xi\in\Freq(x)}|\Av[f,\bar\xi](x)|^2.
\]
By Lemma \ref{cor:dense-WWRG} every point in $\hat{X}_{\mu}$ is Wiener--Wintner $\mu$-generic.

\eqref{reg-erg-ii}$\implies$\eqref{reg-erg-iii} This is immediate.

\eqref{reg-erg-iii}$\implies$\eqref{reg-erg-i} 
This follows from Lemma \ref{lem:equiv_regular_points} and Lemma \ref{lem:freq=spec}.  
\end{proof}

Theorem \ref{thm:WW-reg-erg} leads to our next result, which enables us to constrain the spectrum using some knowledge on the behaviour of generic points. 

\begin{cor}\label{cor:criterion-Xi}
Let $\mu\in\MT(X)$ and let $\Xi$ be a subset of $\bS^1$.
The following conditions are equivalent:
\begin{enumerate}
    \item \label{Xi-i} The measure $\mu$ is ergodic and $\Spec(X,T,\mu)\subseteq\Xi$. 
    \item \label{Xi-ii} There exists a Borel set $X'$ with $\mu(X')=1$  such that 
    every $x\in X'$ is a Wiener--Wintner $\mu$-generic point  and for every $f\in C(X)$ we have
\begin{equation}\label{eq:reg-wwg-xi}
 \norm{P_{\Kron}(f)}^2_2=\sum_{\xi\in\Xi}|\Av[f,\bar{\xi}](x)|^2.    
\end{equation}
 \item \label{Xi-iii} There exists a $\mu$-generic point $x\in X$ and a countable dense set $\cD\subseteq C(X)$ such that for every $f\in \cD$ and $\xi\in \Xi$ the sequence $(\Av_n[f,\bar{\xi}](x))_{n=1}^\infty$ converges and equality \eqref{eq:reg-wwg-xi} holds.
 \item \label{Xi-iv} 
There is a Wiener--Wintner  $\mu$-generic point $x$  with  $\Freq(x)\subseteq\Xi$.
\end{enumerate}    
\end{cor}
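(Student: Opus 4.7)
My approach is to establish the cyclic chain (i) $\Rightarrow$ (ii) $\Rightarrow$ (iii) $\Rightarrow$ (iv) $\Rightarrow$ (i). The first implication is a direct consequence of Theorem \ref{thm:WW-reg-erg}, which under ergodicity of $\mu$ supplies a Borel set $\hat X_\mu$ of full $\mu$-measure consisting of regular Wiener--Wintner $\mu$-generic points; taking $X' = \hat X_\mu$, Corollary \ref{cor:Avg_is_zero} gives $\Freq(x) \subseteq \Spec(X,T,\mu) \subseteq \Xi$ and $\Av[f,\bar\xi](x) = 0$ whenever $\xi \notin \Spec(X,T,\mu)$, so the sum appearing in the regularity identity can be harmlessly enlarged from $\Freq(x)$ to $\Xi$. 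The implication (ii) $\Rightarrow$ (iii) is immediate: pick any $x \in X'$ (which exists as $\mu(X')=1$) and any countable dense $\cD \subseteq C(X)$.

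The heart of the proof is (iii) $\Rightarrow$ (iv), and its crucial preliminary step is to show $\Freq(x) \subseteq \Xi$. Suppose, for contradiction, that some $\xi_0 \in \Freq(x) \setminus \Xi$ exists. Running the subsequence-extraction and diagonalisation argument from the proof of Lemma \ref{lem:freq_sub_eig} produces a nonzero eigenfunction $e_{\xi_0} \in L^2(X,\mu)$ of $U_T$ for the eigenvalue $\xi_0$, with $\norm{e_{\xi_0}}_2 \le 1$. For each $\xi \in \Freq(x)$ pick the eigenfunction $e_\xi$ provided by Lemma \ref{lem:freq_sub_eig}; eigenfunctions of $U_T$ for distinct eigenvalues are orthogonal, so $\{e_\xi/\norm{e_\xi}_2 : \xi \in (\Xi \cap \Freq(x)) \cup \{\xi_0\}\}$ is an orthonormal family inside $\Kron$. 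For every $f \in \cD$ the identity in (iii) simplifies, via $\Av[f,\bar\xi](x) = \langle f, e_\xi\rangle$ on $\Xi \cap \Freq(x)$ and $\Av[f,\bar\xi](x) = 0$ on $\Xi \setminus \Freq(x)$, to
\[
\norm{P_\Kron(f)}_2^2 = \sum_{\xi \in \Xi \cap \Freq(x)} |\langle f, e_\xi\rangle|^2,
\]
whereas Bessel's inequality combined with $\norm{e_\xi}_2 \le 1$ gives
\[
\sum_{\xi \in \Xi \cap \Freq(x)} |\langle f, e_\xi\rangle|^2 + |\langle f, e_{\xi_0}\rangle|^2 \le \norm{P_\Kron(f)}_2^2.
\]
The two displays force $\langle f, e_{\xi_0}\rangle = 0$ for every $f \in \cD$, and density of $\cD$ in $C(X)$ together with density of $C(X)$ in $L^2(X,\mu)$ yields $e_{\xi_0} = 0$, a contradiction.

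Once $\Freq(x) \subseteq \Xi$ is established, convergence of $\Av_n[f,\bar\xi](x)$ for $\xi \notin \Freq(x)$ holds automatically with limit $0$ by the very definition of frequency, while for $\xi \in \Xi$ it extends from $\cD$ to all of $C(X)$ by a Bolzano--Weierstrass plus diagonalisation argument: Lemma \ref{WWP_subset} shows that any two subsequential limits define continuous linear functionals on $C(X)$ agreeing on $\cD$, hence everywhere. Thus $x$ is Wiener--Wintner $\mu$-generic, and the regularity identity extends from $\cD$ to $C(X)$ by sup-norm continuity of both its sides, giving (iv). Finally, for (iv) $\Rightarrow$ (i), Lemma \ref{lem:equiv_regular_points} converts regularity of $x$ into the statement that $\{e_\xi : \xi \in \Freq(x)\}$ is an orthonormal basis of $\Kron$, and Lemma \ref{lem:freq=spec} then yields ergodicity of $\mu$ together with $\Freq(x) = \Spec(X,T,\mu)$; combined with $\Freq(x) \subseteq \Xi$ this gives $\Spec(X,T,\mu) \subseteq \Xi$. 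The principal difficulty is the eigenfunction-manufacturing step in (iii) $\Rightarrow$ (iv), where a subsequential Wiener--Wintner average for some $\xi_0 \notin \Xi$ must be turned into a bona-fide eigenfunction of $U_T$ to feed into a Bessel-type contradiction.
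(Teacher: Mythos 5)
Your proposal is correct and follows essentially the same route as the paper: the same cyclic chain of implications, with (i)$\implies$(ii) via Theorem \ref{thm:WW-reg-erg}, the key step (iii)$\implies$(iv) via the eigenfunctions of Lemma \ref{lem:freq_sub_eig} and a Bessel-type comparison forcing $\Freq(x)\subseteq\Xi$ (the paper writes this as a collapsing chain of inequalities rather than your explicit contradiction with a single $\xi_0$, but the substance is identical), and (iv)$\implies$(i) via Lemmas \ref{lem:equiv_regular_points} and \ref{lem:freq=spec}. Your final extension of the regularity identity from $\cD$ to $C(X)$ is exactly the content of Lemma \ref{cor:dense-WWRG}, which you could simply cite.
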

\begin{proof} \eqref{Xi-i}$\implies$\eqref{Xi-ii} Use Theorem \ref{thm:WW-reg-erg} to find a full measure set $\hat{X}_{\mu}$ of Wiener--Wintner $\mu$-generic points. For every $x\in \hat{X}_{\mu}$, we use Lemma  \ref{lem:freq=spec} to substitute $\Freq(x)$ with $\Spec(X,T,\mu)$ in Lemma \ref{lem:equiv_regular_points} Since $\Spec(X,T,\mu)\subseteq \Xi$ and $\Av[f,\bar{\xi}](x)=0$ for $\xi\notin\Freq(x)$, we obtain 
\begin{equation*}
\begin{aligned}
\norm{P_{\Kron}(f)}^2_2 = \sum_{\xi\in\Spec(X,T,\mu)}|\Av[f,\bar{\xi}](x)|^2 
&\le \sum_{\xi\in\Xi}|\Av[f,\bar{\xi}](x)|^2 \\
&= \sum_{\xi\in\Xi\cap\Freq(x)}|\Av[f,\bar{\xi}](x)|^2 
\le \norm{P_\Kron(f)}_2^2.
\end{aligned}
\end{equation*} 
Hence \eqref{eq:reg-wwg-xi} holds for every $x\in\hat{X}_{\mu}$.

\eqref{Xi-ii}$\implies$\eqref{Xi-iii} This is immediate.

\eqref{Xi-iii}$\implies$\eqref{Xi-iv} Since $\cD$ is dense in $C(X)$, using Lemma \ref{WWP_subset} we see that for every $\xi\in \Xi$ and $f\in C(X)$ the limit $A[f,\bar{\xi}](x)$ exists. 
By proof of Corollary \ref{cor:Kron-Xi} and our assumption we have that
\begin{equation}
\begin{aligned}
\lVert P_{\Kron}(f)\rVert^2_{2}\ge \sum_{\xi\in\Freq(x)}\lvert \langle f, e_\xi\rangle\rvert^2  &\ge \sum_{\xi\in\Freq(x)\cap\Xi}\lvert \langle f,e_\xi \rangle \rvert^2 \\
&=\sum_{\xi\in\Freq(x)\cap\Xi}\lvert \Av[f,\bar{\xi}](x)\rvert^2=\lVert P_{\Kron}(f)\rVert^2_{2}.
\end{aligned}
\end{equation}

It follows that \[
\sum_{\xi\in\Xi}\lvert\langle f,e_\xi\rangle \rvert^2=\lVert P_{\Kron}(f)\rVert^2_{2}=\sum_{\xi\in\Freq(x)}\lvert \langle f,e_\xi\rangle \rvert^2,\]
 so $\Freq(x)\subset \Xi$, and therefore, by Lemma \ref{lem:equiv_regular_points}, we see that $x$ must be a Wiener--Wintner $\mu$-generic point.

\eqref{Xi-iv}$\implies$\eqref{Xi-i} Since $x$ is a Wiener--Wintner $\mu$-generic point, we have that $\mu$ is ergodic by Theorem \ref{thm:WW-reg-erg} and $\Spec(X,T,\mu)=\Freq(x)\subseteq\Xi$ by Lemma \ref{lem:equiv_regular_points} and Lemma \ref{lem:freq=spec}.  
\end{proof}

For uniquely ergodic topological dynamical systems with continuous eigenfunctions, all points turn out to be Wiener--Wintner generic for the unique invariant measure of the system.

\begin{thm}\label{lem:uniformAV}
If $(X,T)$ is a uniquely ergodic topological dynamical system with continuous eigenfunctions, then every $x\in X$  is a Wiener--Wintner $\mu$-generic point, where $\mu$ stands for the unique $T$-invariant measure. 
\end{thm}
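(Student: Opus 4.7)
The plan is to verify the three ingredients in the definition of a regular Wiener--Wintner $\mu$-generic point: (a) $x$ is $\mu$-generic, (b) for every $f\in C(X)$ and $\xi\in\bS^1$ the averages $\Av_n[f,\bar\xi](x)$ converge, and (c) the regularity identity \eqref{eq:reg-wwg} holds. Item (a) is a classical consequence of unique ergodicity, so attention shifts to (b) and (c).

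For (b), I split on whether $\xi\in\Spec(X,T,\mu)$. If $\xi\notin\Spec(X,T,\mu)$, then since $x$ is $\mu$-generic, Corollary~\ref{cor:Avg_is_zero} yields $\Av[f,\bar\xi](x)=0$ for every $f\in C(X)$. If $\xi\in\Spec(X,T,\mu)$, I pick a continuous eigenfunction $e_\xi$ of $U_T$ for $\xi$. Unique ergodicity forces ergodicity, so the eigenspaces of $U_T$ are one-dimensional. Furthermore, $|e_\xi|$ is continuous and pointwise $T$-invariant, so each of its Birkhoff averages equals $|e_\xi|$ itself; by unique ergodicity these averages converge uniformly to $\int|e_\xi|\,d\mu$, which forces $|e_\xi|$ to be constant \emph{on all of} $X$. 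After rescaling I may assume $|e_\xi|\equiv 1$ on $X$, giving $\xi^{-j}=e_\xi(x)\overline{e_\xi(T^j x)}$ and hence
\[
\Av_n[f,\bar\xi](x)=e_\xi(x)\cdot\frac{1}{n}\sum_{j=0}^{n-1}(f\overline{e_\xi})(T^j x).
\]
Since $f\overline{e_\xi}\in C(X)$, unique ergodicity makes the right-hand side converge uniformly in $x$ to $e_\xi(x)\langle f,e_\xi\rangle$. In particular $\Av[f,\bar\xi](x)=e_\xi(x)\langle f,e_\xi\rangle$ exists for every $x$ and $f$, so $x$ is Wiener--Wintner $\mu$-generic.

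Taking $f=e_\xi$ in the identity above yields $|\Av[e_\xi,\bar\xi](x)|=|e_\xi(x)|=1$, so every $\xi\in\Spec(X,T,\mu)$ lies in $\Freq(x)$; combined with the reverse inclusion from Corollary~\ref{cor:Avg_is_zero}, this gives $\Freq(x)=\Spec(X,T,\mu)$. By ergodicity, the normalised continuous eigenfunctions $\{e_\xi:\xi\in\Spec(X,T,\mu)\}$ form an orthonormal basis of $\Kron$, so Parseval together with $|e_\xi(x)|=1$ yields
\[
\|P_\Kron(f)\|_2^2=\sum_{\xi\in\Spec(X,T,\mu)}|\langle f,e_\xi\rangle|^2=\sum_{\xi\in\Freq(x)}|\Av[f,\bar\xi](x)|^2,
\]
which is \eqref{eq:reg-wwg}. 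The only delicate point in the argument is that, unlike under mere ergodicity, one obtains $|e_\xi|$ constant on all of $X$ and not merely $\mu$-almost everywhere; this is precisely where unique ergodicity (as opposed to ergodicity) is indispensable so that the rewriting trick applies at \emph{every} $x\in X$, not just on $\mathrm{supp}(\mu)$.
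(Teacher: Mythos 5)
Your proof is correct, and it takes a genuinely more self-contained route than the paper's. The paper's argument is a short deduction from Robinson's uniform Wiener--Wintner theorem \cite[Theorem 1.1]{Robinson94}: that theorem immediately gives $\Av[f,\bar{\xi}](x)=P_\xi(f)(x)$ for every $x$, $f$ and $\xi$, after which continuity of $P_\xi(f)$, Lemma \ref{lem:freq=spec}, and the observation that $|P_\xi(f)|^2$ is a continuous $T$-invariant (hence constant) function yield regularity. You never invoke Robinson: for $\xi\notin\Spec(X,T,\mu)$ you use Corollary \ref{cor:Avg_is_zero} (legitimate, since unique ergodicity makes every point $\mu$-generic), and for $\xi\in\Spec(X,T,\mu)$ you reprove the relevant case of the uniform Wiener--Wintner theorem by hand, writing $\xi^{-j}=e_\xi(x)\overline{e_\xi(T^j x)}$ for a continuous unimodular eigenfunction and reducing the twisted averages to Birkhoff averages of $f\overline{e_\xi}$, which converge uniformly by unique ergodicity; this recovers exactly the formula $\Av[f,\bar{\xi}](x)=\langle f,e_\xi\rangle e_\xi(x)$ that the paper imports. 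The endgame (constancy of $|e_\xi|$, the identity $\Freq(x)=\Spec(X,T,\mu)$, and Parseval over the orthonormal basis coming from one-dimensional eigenspaces) matches the paper's. What your route buys is independence from the external citation; the one point to make explicit is that your rewriting along the full orbit of an arbitrary $x$ (and the pointwise invariance of $|e_\xi|$) uses the hypothesis in the form ``each eigenvalue admits a continuous $e_\xi$ with $e_\xi\circ T=\xi e_\xi$ at \emph{every} point of $X$'', not merely $\mu$-a.e.; this is the intended reading of ``continuous eigenfunctions'' (and is likewise what Robinson's theorem presupposes), so it is a clarification rather than a gap.
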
   
\begin{proof}
Fix $f\in C(X)$. Recall that $P_\xi(f)$ denotes the projection on the eigenspace of $\xi$ when $\xi\in \Spec(X,T,\mu)$ and $P_\xi(f)=0$ otherwise. 

By the uniform version of the Wiener--Wintner theorem \cite[Theorem 1.1]{Robinson94}, we obtain that for every $x\in X$, 
for every $f\in C(X)$, and $\xi\in\bS^1$ we have
\begin{equation*}
    \Av[f,\bar\xi](x)=\lim_{n\to \infty}\Av_n[f,\bar{\xi}](x)=P_\xi(f)(x). 
\end{equation*}
Furthermore, $P_\xi(f)$ is a continuous function. Hence, every $x\in X$ is Fourier--Bohr $\mu$-generic, so $\Freq(x)=\Spec(X,T,\mu)$ by Lemma \ref{lem:freq=spec}. 
For every $k\ge 0$ and $x\in X$ we have
\[
\lvert P_\xi(f)(T^k(x)) \rvert^2=\lvert \xi^{-k} P_\xi(f)(x) \rvert^2=\lvert P_\xi(f)(x) \rvert^2,
\]
so $x\mapsto \lvert P_\xi(f)(x) \rvert^2$ is a continuous $T$-invariant function. By unique ergodicity, we see that for every $x\in X$ it holds
\[\lvert P_\xi(f)(x) \rvert^2=\int \lvert P_\xi(f)(y) \rvert^2 \text{ d}\mu(y)=\norm{P_\xi(f)}_2^2.
\]
Therefore
\[
\sum_{\xi\in \bS^1}|\Av[f,\bar\xi](x)|^2=\sum_{\xi\in \bS^1}\norm{P_\xi(f)}^2_2=\sum_{\xi\in\Spec(X,T,\mu)}\norm{P_\xi(f)}_2^2=\norm{P_\Kron(f)}_2^2.\qedhere
\]\end{proof}
\begin{rem}
Conversely, assume that $(X,T)$ is minimal and that every $x\in X$ is a Fourier--Bohr generic point. Then $(X,T)$ is uniquely ergodic and all its eigenfunctions are continuous. Indeed, suppose that there exist $\xi_0\in \bS^1$ and $h\in L^2(X,\mu)$ such that $h\circ T=\xi_0 h$, and $h$ is not equal $\mu$ a.e to any continuous function. By~\cite[Corollary 9]{Walters96} there exist $f_0\in C(X)$ and $x_0\in X$ such that the limit
\[
\lim_{n\to \infty}\frac{1}{n}\sum_{j=0}^{n-1}f_0(T^j x_0)\xi_0^{-j}
\]
fails to exist, contradicting the assumption that every $x\in X$ is a Fourier--Bohr $\mu$-generic point.
\end{rem}

\section{A characterisation of discrete spectrum and Besicovitch almost periodic points}

As mentioned in the introduction, we are inspired by \cite{LSS}.
In this section, we recover a special case $G=\Z$ of a result of Lenz, Splinder and Strungaru \cite[Theorem 4.7]{LSS} characterising systems with discrete spectrum. To see that Theorem \ref{thm:Discrete_spectrum} below contains \cite[Theorem 4.7]{LSS} when $G=\Z$, one must note that by \cite[Proposition 4.2]{LSS},  the Besicovitch almost periodic points are the same as  Wiener--Wintner $\mu$-generic points when we replace $\norm{P_\Kron(f)}_2$ with $\norm{f}_2$ 
 in \eqref{eq:reg-wwg}. Theorem \ref{thm:Discrete_spectrum} follows directly from Corollary \ref{cor:criterion-Xi}.

  \begin{thm}\label{thm:Discrete_spectrum}
    Let 
    $\mu\in \MT(X)$ and $\Xi\subseteq \bS^1$. The following conditions are equivalent:
    
       \begin{enumerate}
       \item\label{thm:DiscEquiv:1} The measure $\mu$ is ergodic, has discrete spectrum and $\Spec(X,T,\mu)\subset \Xi.$

      \item\label{thm:DiscEquiv:2} There exists a Borel set $X'$ with $\mu(X')=1$ such that every $x\in X'$ is a Wiener--Wintner $\mu$-generic point and for every $f\in C(X)$ we have  \begin{equation}\label{eq:discrete_spectrum}
        \int \lvert f\rvert^2\text{ d}\mu = \sum_{\xi\in\Xi} |\Av[f,\bar\xi](x)|^2.\end{equation} 
\item \label{thm:DiscEquiv:3+} There exists a  $\mu$-generic point $x\in X$ and a countable dense set $\cD\subset C(X)$ such that for every $f\in \cD$ and $\xi\in \Xi$ the limit $\Av[f,\bar\xi](x)$ exists and the equality \eqref{eq:discrete_spectrum} holds.
    \item \label{thm:DiscEquiv:3} There exists a Wiener--Wintner $\mu$-generic point $x\in X$ such that for every $f\in C(X)$ the equality \eqref{eq:discrete_spectrum} holds and $\Freq(x)\subseteq\Xi$.
      \end{enumerate}   

\end{thm}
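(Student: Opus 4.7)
The crucial observation I would use is that $\mu$ has discrete spectrum if and only if $\Kron=L^2(X,\mu)$, equivalently $\|P_\Kron(f)\|_2^2=\int|f|^2\,d\mu$ for every $f\in L^2(X,\mu)$. Modulo this substitution, conditions \eqref{thm:DiscEquiv:1}, \eqref{thm:DiscEquiv:2}, \eqref{thm:DiscEquiv:3+}, \eqref{thm:DiscEquiv:3} are exactly conditions \eqref{Xi-i}, \eqref{Xi-ii}, \eqref{Xi-iii}, \eqref{Xi-iv} of Corollary~\ref{cor:criterion-Xi}. My plan is therefore to run the cycle of implications from Corollary~\ref{cor:criterion-Xi} and justify, at each step, the interchange $\|P_\Kron(f)\|_2^2\leftrightarrow\int|f|^2\,d\mu$.

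For \eqref{thm:DiscEquiv:1}$\implies$\eqref{thm:DiscEquiv:2} I would apply Corollary~\ref{cor:criterion-Xi}\eqref{Xi-i}$\implies$\eqref{Xi-ii} and use discrete spectrum to rewrite $\|P_\Kron(f)\|_2^2$ as $\int|f|^2\,d\mu$. The step \eqref{thm:DiscEquiv:2}$\implies$\eqref{thm:DiscEquiv:3+} is trivial: pick any countable dense $\cD\subset C(X)$ and any $x$ from the full-measure set. For \eqref{thm:DiscEquiv:3}$\implies$\eqref{thm:DiscEquiv:1}, Corollary~\ref{cor:criterion-Xi}\eqref{Xi-iv}$\implies$\eqref{Xi-i} will deliver ergodicity and $\Spec(X,T,\mu)\subseteq\Xi$; then, since $x$ is regular Wiener--Wintner $\mu$-generic with $\Freq(x)\subseteq\Xi$, Definition~\ref{def:reg_WW_gen} combined with the hypothesis \eqref{eq:discrete_spectrum} yields $\|P_\Kron(f)\|_2^2=\int|f|^2\,d\mu$ for every $f\in C(X)$, which extends to $L^2(X,\mu)$ by density and gives discrete spectrum.

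The main work will lie in \eqref{thm:DiscEquiv:3+}$\implies$\eqref{thm:DiscEquiv:3}, because only equality on the countable dense set $\cD$ is available. Fix $f\in\cD$. Since $x$ is $\mu$-generic, Corollary~\ref{cor:Avg_is_zero} gives $\Av[f,\bar\xi](x)=0$ for every $\xi\notin\Spec(X,T,\mu)$, and in particular whenever $\xi\notin\Freq(x)$; so the sum over $\Xi$ collapses to the sum over $\Xi\cap\Freq(x)$. Combining with Corollary~\ref{cor:Kron-Xi} I would obtain the inequality chain
\[
\int|f|^2\,d\mu=\sum_{\xi\in\Xi\cap\Freq(x)}|\Av[f,\bar\xi](x)|^2\leq\sum_{\xi\in\Freq(x)}|\Av[f,\bar\xi](x)|^2\leq\|P_\Kron(f)\|_2^2\leq\int|f|^2\,d\mu,
\]
which forces all terms to be equal. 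In particular $\|P_\Kron(f)\|_2^2=\sum_{\xi\in\Xi}|\Av[f,\bar\xi](x)|^2$ for every $f\in\cD$, so the hypothesis of Corollary~\ref{cor:criterion-Xi}\eqref{Xi-iii} is satisfied and \eqref{Xi-iii}$\implies$\eqref{Xi-iv} yields a regular Wiener--Wintner $\mu$-generic point with $\Freq(x)\subseteq\Xi$. The equality $\|P_\Kron(f)\|_2^2=\|f\|_2^2$ on $\cD$ propagates by $L^2$-continuity to all of $L^2(X,\mu)$, giving discrete spectrum, and hence \eqref{eq:discrete_spectrum} for every $f\in C(X)$. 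The hard part I anticipate is precisely this bootstrapping step: one must simultaneously establish the substitution $\|P_\Kron(f)\|_2^2=\int|f|^2\,d\mu$ and handle frequencies in $\Xi\setminus\Freq(x)$, both of which rely on the vanishing from Corollary~\ref{cor:Avg_is_zero} together with the sharp sandwich of inequalities above.
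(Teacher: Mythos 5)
Your proposal is correct and takes essentially the paper's approach: the paper proves the theorem simply by noting that discrete spectrum is exactly the identity $\|P_{\Kron}(f)\|_2=\|f\|_2$ and that the four conditions then become those of Corollary \ref{cor:criterion-Xi}, which is precisely your reduction. One small repair in your sandwich for \eqref{thm:DiscEquiv:3+}$\implies$\eqref{thm:DiscEquiv:3}: for $\xi\in\Freq(x)\setminus\Xi$ the limits $\Av[f,\bar\xi](x)$ are not known to exist under \eqref{thm:DiscEquiv:3+}, so those terms should be written as $|\langle f,e_\xi\rangle|^2$ and bounded via the Bessel-type inequality from the \emph{proof} of Corollary \ref{cor:Kron-Xi} (its statement assumes existence of all these limits), which is exactly how the paper handles the analogous step \eqref{Xi-iii}$\implies$\eqref{Xi-iv} in Corollary \ref{cor:criterion-Xi}.
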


\section{Besicovitch pseudometric and its relatives}\label{sec:Besicovitch}
The aim of this section is to state a few technical results about Besicovitch pseudometric and related notions that we need later. Although several of these findings have been previously published, their original form does not meet our requirements. Hence, we reformulate them here.

Let $(n_k)_{k=1}^\infty$ be a strictly increasing sequence of positive integers. 
The \emph{upper/lower asymptotic density} $d^*_{(n_k)}(Q)$/$d^{(n_k)}_*(Q)$ of $Q\subset \Z$ along $(n_k)$ is given by
\begin{equation}\label{eq:density-along}
d^*_{(n_k)}(Q)=\limsup_{k\to\infty}\frac{\left|Q\cap [0,n_k)\right|}{n_k}\text{ and }d^{(n_k)}_*(Q)=\liminf_{k\to\infty}\frac{\left|Q\cap [0,n_k)\right|}{n_k}.
\end{equation}
By $d^*(S)/d_*(S)$ we denote the upper/lower asymptotic density of $S\subseteq\N$ along the sequence $n_k=k$ for every $k\in\N$.

The \emph{Besicovitch pseudometric along $(n_k)_{k=1}^\infty$} on $(X,T)$ is given for $x,x'\in X$ by
	\begin{equation}\label{def:D_B-along}
	    D^{(n_k)}_B(x,x')=\limsup_{k\to\infty}\frac{1}{n_k}\sum_{j=0}^{n_k-1}\rho(T^j(x),T^j(x')).
	\end{equation}	

Furthermore, we introduce a variant of the pseudometric  used in \cite[Lemma 2]{KLO2}; however, 
here it is based on the upper density along $(n_k)_{k=1}^{\infty}$ instead of $d^*$. Given a topological dynamical system $(X,T)$, a sequence $(n_k)_{k=1}^{\infty}$, and $x,y\in X$ we set
\begin{equation}\label{defn:equivalent-pseudometric-along-subseq}
        \tilde{D}_{B}^{(n_k)}(x,y)=\inf\left\{ \delta > 0 \mid d^*_{(n_k)}\left(\{ j\in \N \mid \rho(T^j(x),T^j(y))\geq \delta \}\right)< \delta   \right\}.
\end{equation} 
An obvious modification of the proof of \cite[Lemma 2]{KLO2} yields the following useful result, which we state for the sake of completeness.
 
\begin{lem}\label{lem:equiv-D_B-along}
 If $(n_k)_{k=1}^\infty\subseteq\N$ is a strictly increasing sequence, then pseudometrics $D_B^{(n_k)}$ and $\tilde{D}_B^{(n_k)}$ are uniformly equivalent on $(X,T)$.
\end{lem}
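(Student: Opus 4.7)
The plan is to prove uniform equivalence by establishing two quantitative inequalities between $D_B^{(n_k)}$ and $\tilde D_B^{(n_k)}$ that hold for all pairs $x,y\in X$. The first, $\tilde D_B^{(n_k)}(x,y)\le\sqrt{D_B^{(n_k)}(x,y)}$, comes from a Markov-type argument; the second, $D_B^{(n_k)}(x,y)\le(1+\mathrm{diam}(X))\,\tilde D_B^{(n_k)}(x,y)$, comes from splitting the Cesàro sum at a threshold. Compactness of $X$ enters only in the second bound, to keep $\mathrm{diam}(X)$ finite.

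For the Markov-type step, I would fix $x,y\in X$, set $d:=D_B^{(n_k)}(x,y)$, and for each $\delta>0$ let $Q_\delta:=\{j\in\N:\rho(T^j(x),T^j(y))\ge\delta\}$. Bounding each term of the Cesàro sum from below by $\delta$ on $Q_\delta$ and by $0$ elsewhere gives
\[
\frac{1}{n_k}\sum_{j=0}^{n_k-1}\rho(T^j(x),T^j(y)) \ge \delta\cdot\frac{|Q_\delta\cap[0,n_k)|}{n_k},
\]
so taking $\limsup_{k\to\infty}$ yields $\delta\,d^*_{(n_k)}(Q_\delta)\le d$. Whenever $\delta^2>d$ this forces $d^*_{(n_k)}(Q_\delta)<\delta$, hence $\tilde D_B^{(n_k)}(x,y)\le\delta$, and letting $\delta\searrow\sqrt{d}$ delivers the first inequality.

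For the split-sum step, given $\delta>\tilde D_B^{(n_k)}(x,y)$ I would pick $\delta'<\delta$ with $d^*_{(n_k)}(Q_{\delta'})<\delta'$. Separating the sum according to whether $j\in Q_{\delta'}$, and bounding $\rho$ by $\mathrm{diam}(X)$ on $Q_{\delta'}$ and by $\delta'$ on its complement, yields
\[
\frac{1}{n_k}\sum_{j=0}^{n_k-1}\rho(T^j(x),T^j(y)) \le \delta' + \mathrm{diam}(X)\cdot\frac{|Q_{\delta'}\cap[0,n_k)|}{n_k}.
\]
Passing to $\limsup_{k\to\infty}$ and invoking $d^*_{(n_k)}(Q_{\delta'})<\delta'$ gives $D_B^{(n_k)}(x,y)\le(1+\mathrm{diam}(X))\delta'\le(1+\mathrm{diam}(X))\delta$; letting $\delta\searrow\tilde D_B^{(n_k)}(x,y)$ produces the second inequality. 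The two inequalities together force $D_B^{(n_k)}(x_n,y_n)\to 0$ and $\tilde D_B^{(n_k)}(x_n,y_n)\to 0$ to hold on the same sequences of pairs, with moduli depending only on $\mathrm{diam}(X)$, which is exactly uniform equivalence.

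The only real subtlety is to ensure that both $\limsup$'s and the density $d^*_{(n_k)}$ are taken along the same subsequence $(n_k)$, so that the chain of inequalities passes cleanly through the limit; this is why the argument is a direct adaptation of the symbolic-space proof in \cite{KLO2} rather than requiring a new idea. No genuine obstacle is anticipated.
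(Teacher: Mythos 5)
Your proof is correct and is essentially the argument the paper relies on: the paper establishes this lemma by invoking the obvious modification of the proof of Lemma~2 of \cite{KLO2}, which is precisely the Chebyshev-type bound $\tilde D_B^{(n_k)}\le\sqrt{D_B^{(n_k)}}$ and the split-sum bound $D_B^{(n_k)}\le(1+\operatorname{diam}(X))\,\tilde D_B^{(n_k)}$ that you derive, carried out along the fixed subsequence $(n_k)$.
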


The proof of our next result follows the same lines as the proofs of \cite[Lemma 6]{KLO2} and \cite[Theorem 15]{KLO2}. The obvious changes must take into account that all the measures in Theorem \ref{thm:Besicovitch-quasi-genericity-and-ergodicity} are generated along the subsequence $(n_k)$, while the statements in \cite{KLO2} use $n_k=k$ for every $k\ge 1$. More precisely, one needs to use  Lemma \ref{lem:equiv-D_B-along} in place of \cite[Lemma 2]{KLO2} and an  appropriate modification  of 
\cite[Theorem 14]{KLO2}.

\begin{thm}\label{thm:Besicovitch-quasi-genericity-and-ergodicity}
Let $(X,T)$ be a topological dynamical system and $(n_k)_{k=1}^\infty\subseteq\N$ be strictly increasing. 
If for every $m\ge 1$ the point $x_m\in X$ is quasi-generic for $\mu_m\in\MT(X)$ along  $(n_k)_{k=1}^\infty$   
and $x\in X$ is such that 
\[
\lim_{m\to\infty}
D^{(n_k)}_B(x,x_m)=0, 
\]
then $x$ is quasi-generic along  $(n_k)_{k=1}^\infty$ for some $\mu\in\MT(X)$, where $\mu_m$ converges to $\mu$ in the weak$^*$ topology
as $m\to\infty$.
Furthermore, if $\mu_m$ is ergodic for every $m\ge 1$, then so is $\mu$.
\end{thm}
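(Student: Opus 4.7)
The plan is to adapt the arguments of [Lemma 6]{KLO2} and [Theorem 15]{KLO2} to the subsequence setting, as the excerpt already prescribes. Along the way, Lemma \ref{lem:equiv-D_B-along} replaces [Lemma 2]{KLO2} and an obvious $(n_k)$-variant of [Theorem 14]{KLO2} is invoked in the ergodicity step. The proof splits into three tasks: weak$^*$-convergence of $(\mu_m)$, quasi-genericity of $x$ for the weak$^*$-limit along $(n_k)$, and preservation of ergodicity.

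For the first two tasks I would fix $f\in C(X)$ and $\varepsilon>0$, use uniform continuity of $f$ on the compact space $X$ to choose $\delta>0$ such that $\rho(y,z)<\delta$ implies $|f(y)-f(z)|<\varepsilon$, and then apply Lemma \ref{lem:equiv-D_B-along} together with the hypothesis $D_B^{(n_k)}(x,x_m)\to 0$ to pick $m_0$ with $\tilde D_B^{(n_k)}(x,x_m)<\min(\delta,\varepsilon)$ for all $m\ge m_0$. By the very definition of $\tilde D_B^{(n_k)}$ in \eqref{defn:equivalent-pseudometric-along-subseq}, for each such $m$ the set $J(m,k):=\{j\in[0,n_k):\rho(T^jx,T^jx_m)\ge\delta\}$ satisfies $|J(m,k)|/n_k<\varepsilon$ for all sufficiently large $k$. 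Splitting $[0,n_k)$ into $J(m,k)$ and its complement yields
\[
\left|\frac{1}{n_k}\sum_{j=0}^{n_k-1}f(T^jx)-\frac{1}{n_k}\sum_{j=0}^{n_k-1}f(T^jx_m)\right|\le \varepsilon(1+2\|f\|_\infty).
\]
The analogous estimate applied to the pair $(x_m,x_{m'})$ with $m,m'\ge m_0$, combined with quasi-genericity of $x_m$ and $x_{m'}$ along $(n_k)$ and passage to $k\to\infty$, gives $|\int f\,d\mu_m-\int f\,d\mu_{m'}|\le \varepsilon(1+2\|f\|_\infty)$. Hence $(\mu_m)$ is weak$^*$-Cauchy and converges to some $\mu$, which lies in $\MT(X)$ because $\MT(X)$ is weak$^*$-closed. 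Taking $k\to\infty$ in the displayed inequality for fixed $m\ge m_0$ pins every accumulation point of the empirical averages $\tfrac{1}{n_k}\sum_{j=0}^{n_k-1}\delta_{T^jx}$ to within $\varepsilon(1+2\|f\|_\infty)$ of $\int f\,d\mu_m$; letting $m\to\infty$ forces these accumulation points to equal $\mu$, so $x$ is quasi-generic for $\mu$ along $(n_k)$.

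The ergodicity statement is the main obstacle and requires the strategy of [Theorem 15]{KLO2}. Assume each $\mu_m$ is ergodic. For fixed $f\in C(X)$ and $\varepsilon>0$, ergodicity of $\mu_m$ combined with the Birkhoff ergodic theorem produces, on a set of full $\mu_m$-measure, a length scale $\ell$ after which sub-averages of $f$ of length $\ell$ are $\varepsilon$-close to $\int f\,d\mu_m$; quasi-genericity of $x_m$ along $(n_k)$ then transfers this concentration to an $(n_k)$-density-one set of starting positions $i\in[0,n_k)$. Using the Besicovitch proximity of $x$ to $x_m$ and Lemma \ref{lem:equiv-D_B-along}, the same concentration holds with $x$ in place of $x_m$ and $\int f\,d\mu$ in place of $\int f\,d\mu_m$. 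Running this over a countable dense family $\mathcal{D}\subseteq C(X)$ and invoking the subsequence-adapted version of [Theorem 14]{KLO2} mentioned in the excerpt shows that the empirical measures of short sub-blocks of the orbit of $x$ converge to $\mu$ along a set of $(n_k)$-density one, which is incompatible with $\mu$ having a non-trivial ergodic decomposition: such a decomposition would force the short-block empirical measures at a $\mu$-positive set of starting positions to cluster near at least two distinct ergodic components, and the Besicovitch transfer would produce an analogous dichotomy along the orbit of $x$, contradicting the concentration just established. Hence $\mu$ is ergodic. The delicate point throughout this step is to perform the combinatorial dissection of $[0,n_k)$ with respect to the density $d^*_{(n_k)}$ from \eqref{eq:density-along} rather than ordinary density, which is precisely what the equivalence in Lemma \ref{lem:equiv-D_B-along} enables.
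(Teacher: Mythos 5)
Your proposal is correct and follows essentially the same route as the paper, which itself only indicates that one should adapt the arguments of \cite{KLO2} (Lemma 6 and Theorem 15), replacing their Lemma 2 by Lemma \ref{lem:equiv-D_B-along} and using an $(n_k)$-adapted version of their Theorem 14 for the ergodicity step. Your explicit splitting argument for the weak$^*$-convergence and quasi-genericity part is the intended one, and your ergodicity sketch relies on the same subsequence-adapted criterion from \cite{KLO2} that the paper invokes, at essentially the same level of detail.
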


Consider a finite discrete space $A$ (an alphabet). Let $A^\Z$ be the topological space being a product of a bi-infinite sequence of copies of $A$. Let $\sigma\colon A^\Z\to A^\Z$ be the shift transformation, where $\sigma(x)_i=x_{i+1}$ for $i\in\Z$. The $\dbar_{(n_k)}$-pseudometric along a strictly increasing sequence $(n_k)$ on $A^\Z$ measures the upper asymptotic density along $(n_k)$ of the set of nonnegative coordinates at which two $A$-valued biinfinite sequences differ, that is, for $x=(x_j)_{j\in\Z}$ and $y=(y_j)_{j\in\Z}$ we set
\[\dbar_{(n_k)}(x,y)=d^*_{(n_k)}\{j\in\N_0:x_j\neq y_j\}=\limsup_{k\to \infty}\frac{1}{n_k}|\{0\leq j<n_k:x_j\neq y_j\}|.\]
Let $\rho$ be any metric on $A^\Z$ compatible with the product topology. It turns out that the Besicovitch pseudometric along $(n_k)$ generated by $\rho$ on $A^\Z$ is uniformly equivalent with the $\dbar_{(n_k)}$-pseudometric.  A proof follows the same lines as the proof of \cite[Theorem 4]{KLO2}, which yields the same conclusion in case that $n_k=k$.

\begin{lem}\label{lem:dbar-D_B-equiv}
If 
$A$ is a finite discrete space, $\rho$ is a metric on $A^\Z$ compatible with the product topology, and $(n_k)_{k=1}^\infty\subseteq\N$ is a strictly increasing sequence, then pseudometrics: $D_B^{(n_k)}$ given by \eqref{def:D_B-along} and $\dbar_{(n_k)}$ are uniformly equivalent on $A^\Z$.    
\end{lem}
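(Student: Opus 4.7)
The plan is to establish both directions of the uniform equivalence via pointwise-to-averaged inequalities, mirroring the proof of \cite[Theorem 4]{KLO2} but tracking asymptotic density along $(n_k)$ in place of the full density. The starting point is to extract two standard compatibility constants from the hypotheses that $A$ is finite discrete and $\rho$ generates the product topology on $A^\Z$. Namely: (i) there exists $c>0$ such that $\rho(u,v)\ge c$ whenever $u_0\ne v_0$, since $\{(u,v):u_0=v_0\}$ is clopen and $A$ is finite; and (ii) for each $\eps>0$ there exists $N=N(\eps)\in\N$ such that $\rho(u,v)<\eps$ whenever $u_i=v_i$ for all $|i|\le N$, since the central cylinders form a neighborhood basis at the diagonal. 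Let $M$ denote the $\rho$-diameter of $A^\Z$.

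The lower bound on $D_B^{(n_k)}$ in terms of $\dbar_{(n_k)}$ is immediate: from (i) we have $c\cdot\mathbbm{1}\{x_j\ne y_j\}\le \rho(T^j(x),T^j(y))$ for every $j\ge 0$, so averaging over $j\in[0,n_k)$ and taking $\limsup$ yields
\[
 c\cdot \dbar_{(n_k)}(x,y)\le D_B^{(n_k)}(x,y).
\]
Thus $\dbar_{(n_k)}(x_m,y_m)\to 0$ whenever $D_B^{(n_k)}(x_m,y_m)\to 0$.

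For the converse direction I would fix $\eps>0$, set $N=N(\eps/2)$ via (ii), and let $S=\{j\in\N_0:x_j\ne y_j\}$. If $j\ge N$ satisfies $[j-N,j+N]\cap S=\varnothing$, then $T^j(x)$ and $T^j(y)$ agree on coordinates $[-N,N]$, so $\rho(T^j(x),T^j(y))<\eps/2$. The set of indices $j\in[0,n_k)$ for which this fails is contained in $[0,N-1]\cup\bigcup_{m\in S}[m-N,m+N]$, whose cardinality in $[0,n_k)$ is at most $N+(2N+1)|S\cap[-N,n_k+N)|$. Dividing by $n_k$ and passing to $\limsup$ shows the upper density along $(n_k)$ of this set is at most $(2N+1)\dbar_{(n_k)}(x,y)$. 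Bounding $\rho$ on the bad set by $M$ and by $\eps/2$ on its complement, we obtain
\[
 D_B^{(n_k)}(x,y)\le M(2N+1)\dbar_{(n_k)}(x,y)+\tfrac{\eps}{2},
\]
so requiring $\dbar_{(n_k)}(x,y)<\eps/(2M(2N+1))$ forces $D_B^{(n_k)}(x,y)<\eps$. Together with the lower bound, this yields the desired uniform equivalence.

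The only real obstacle is bookkeeping around the boundary effects: the initial block $[0,N-1]$, where shifting by $j<N$ lands the window $[-N,N]$ in negative coordinates that $\dbar_{(n_k)}$ does not see, and the slight overcount when thickening $S$ by $N$ near the endpoint $n_k$. Both contribute only $O(N/n_k)$ to the normalized sums and therefore vanish in the $\limsup$, but care is needed to verify that the constants $c$, $N$, and the density factor $(2N+1)$ are independent of the sequence $(n_k)$, which is what makes the equivalence uniform across all choices of subsequence.
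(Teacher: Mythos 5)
Your proof is correct and is essentially the same density-counting argument that the paper invokes (it defers to the proof of Theorem 4 in \cite{KLO2}, adapted to the subsequence $(n_k)$): one direction via a uniform positive lower bound on $\rho$ when the zeroth coordinates differ, the other via cylinder approximation of $\rho$ and thickening the disagreement set by the window size $N$, with boundary terms of order $N/n_k$ vanishing in the $\limsup$. The constants $c$, $N(\eps)$, $M$ depend only on $\rho$ and $A$, so the equivalence is uniform, exactly as required.
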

The next Lemma has not appeared in \cite{KLO2} in any form, hence we will present its proof in detail.
\begin{lem}\label{lem:prox_Av_n} Let $f\in C(X)$.
For every $\eps>0$ there is $\delta>0$ such that if $x,x'\in X$ satisfy
$D_B(x,x')<\delta$, then there is $N\ge 1$ such that for every $n\ge N$ and $\xi\in\bS^1$ we have
\[
\left|\Av_n[f,\bar{\xi}](x)-\Av_n[f,\bar{\xi}](x')\right|\le\eps.
\]
\end{lem}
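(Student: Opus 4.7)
The plan is to first use the triangle inequality and the fact that $|\xi^{-j}|=1$ to eliminate the dependence on $\xi$ entirely. Writing
\[
\Av_n[f,\bar{\xi}](x)-\Av_n[f,\bar{\xi}](x')=\frac{1}{n}\sum_{j=0}^{n-1}\xi^{-j}\bigl(f(T^j(x))-f(T^j(x'))\bigr),
\]
we get immediately
\[
\bigl|\Av_n[f,\bar{\xi}](x)-\Av_n[f,\bar{\xi}](x')\bigr|\le \frac{1}{n}\sum_{j=0}^{n-1}\bigl|f(T^j(x))-f(T^j(x'))\bigr|,
\]
and the right-hand side is independent of $\xi$. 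Hence it suffices to find $\delta>0$ and $N\ge 1$ so that the right-hand side is at most $\eps$ for all $n\ge N$, whenever $D_B(x,x')<\delta$.

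Next I would exploit uniform continuity of $f$ on the compact space $X$. Given $\eps>0$ pick $\eta>0$ such that $\rho(a,b)<\eta$ implies $|f(a)-f(b)|<\eps/2$, and let $M=\norm{f}_\infty+1$. For every $n\ge 1$ set
\[
A_n=\{0\le j<n:\rho(T^j(x),T^j(x'))\ge \eta\}.
\]
Splitting the averaged sum according to whether $j\in A_n$ yields
\[
\frac{1}{n}\sum_{j=0}^{n-1}\bigl|f(T^j(x))-f(T^j(x'))\bigr|\le 2M\cdot\frac{|A_n|}{n}+\frac{\eps}{2}.
\]

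The remaining task is to bound $|A_n|/n$ in terms of $D_B(x,x')$. Here I apply a Markov-type estimate: since $\rho(T^j(x),T^j(x'))\ge \eta$ for every $j\in A_n$,
\[
\frac{|A_n|}{n}\le \frac{1}{\eta n}\sum_{j=0}^{n-1}\rho(T^j(x),T^j(x')),
\]
so that $\limsup_{n\to\infty}|A_n|/n\le D_B(x,x')/\eta$. Setting $\delta=\eps\eta/(4M)$, the hypothesis $D_B(x,x')<\delta$ forces $\limsup_n|A_n|/n<\eps/(4M)$, and therefore some $N\ge 1$ satisfies $|A_n|/n<\eps/(4M)$ for all $n\ge N$. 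Combining with the previous display gives the desired bound $\eps/2+\eps/2=\eps$, valid simultaneously for every $\xi\in\bS^1$.

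The argument is essentially routine; there is no real obstacle. The only point worth emphasising is conceptual rather than technical: the unimodularity $|\xi^{-j}|=1$ makes the dependence on $\xi$ vanish in the very first step, so that the estimate provided by uniform continuity and the Markov-type passage from the Besicovitch average to the density of $A_n$ is automatically uniform in $\xi\in\bS^1$.
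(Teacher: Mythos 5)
Your proof is correct and follows essentially the same route as the paper: kill the $\xi$-dependence via $|\xi^{-j}|=1$, split indices into those where the orbits are $\eta$-close (handled by uniform continuity of $f$) and the rest (handled by the sup-norm bound), and show the bad set has small density once $D_B(x,x')$ is small. The only cosmetic difference is that the paper obtains the density bound by invoking the equivalence of $D_B$ with the density-type pseudometric $\tilde{D}_B$ (Lemma \ref{lem:equiv-D_B-along}), whereas you derive it directly from a Markov-type inequality, which is precisely the content of one direction of that equivalence and makes your argument self-contained.
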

\begin{proof}
Fix $\eps>0$ and $f\in C(X)$. We pick $\alpha>0$ such that if $z,z'\in X$ and $\rho(z,z')<\alpha$, then $\lvert f(z)-f(z')\rvert<{\eps}/{3}$. Set $\beta=\min\{\alpha,\eps/(4\max\{1,\lVert f\rVert_{\infty}\})\}$. Next, we use Lemma \ref{lem:equiv-D_B-along} for $n_k=k$ to  choose $\delta>0$ so small that $D_B(x,x')<\delta$ implies
\[
d_*\left(\left\{ j\ge 0 : \rho\left( T^j(x), T^j(x') \right)<\beta\right\}\right)\ge 1-\beta.
\]
It follows that there is $N\ge 1$ such that for every $n\ge N$ we have 
    \begin{equation*}
    \frac1n\left\lvert\left\{0\le j<n: \rho\left( T^j(x), T^j(x') \right)<\alpha\right\}\right\rvert\geq 1-\dfrac{\eps}{3\max\{1,\lVert f\rVert_{\infty}\}}.
    \end{equation*}   
Take $\xi\in\bS^1$. Fix $n\ge N$ and define 
\[
I(n)=\left\{0\le j<n: \rho\left( T^j(x), T^j(x') \right)<\alpha\right\}.
\]
Note that $j\in I(n)$ implies $|f( T^j(x) )-  f( T^j(x') )|<\eps/3$. 
Let $J(n)$ be the complement of $I(n)$ in $\{0,1,\ldots,n-1\}$. Hence, 
\begin{equation}
    \label{ineq:jn} \dfrac{1}{n}\lvert J(n) \rvert \lVert f\rVert_{\infty} < \dfrac{\eps}{3}\frac{\lVert f\rVert_{\infty}}{\max\{1,\lVert f\rVert_{\infty}\}} 
    \le \dfrac{\eps}{3}.
\end{equation}
Furthermore, for $j\in J(n)$ we have 
the trivial bound
\[
\left|f\left( T^j(x) \right)-  f\left( T^j(x') \right)\right|\le 2\lVert f\rVert_\infty.
\]
Observe that
\begin{align*}
\lvert\Av_n[f,\bar{\xi}](x) & -\Av_n[f,\bar{\xi}](x')\rvert 
    = \left| \dfrac{1}{n}\sum_{j=0}^{n-1} f\left( T^j(x) \right)\xi^{-j} - \dfrac{1}{n}\sum_{j=0}^{n-1} f\left( T^j(x') \right)\xi^{-j}\right|   \\ &\le 
         \dfrac{1}{n}\sum_{j=0}^{n-1} \left|f\left( T^j(x) \right)-  f\left( T^j(x') \right)\right|\left|\xi^{-j}\right|\\
         &\le 
         \dfrac{1}{n}\left(\sum_{j\in I(n)} |f( T^j(x))-  f( T^j(x') )|+\sum_{j\in J(n)} |f( T^j(x) )-  f( T^j(x'))|\right)\\
         &\le \dfrac{1}{n}\left(\dfrac{\eps}{3}\lvert I(n)\rvert+  2 \lvert J(n) \rvert \lVert f\rVert_{\infty}\right)\le \dfrac{\eps}{3}+  2 \dfrac{\eps}{3}=\eps. 
\end{align*}
This completes the proof.
\end{proof}

\section{A new metric on the space of invariant measures}\label{sec:rho-bar}

We study the metric $\bar{\rho}$ on the space $\MT(X)$. We show that $\bar{\rho}$ is a metric inducing a topology stronger than the weak$^*$ topology. The metric $\bar{\rho}$  generalises Ornstein's $\dbarm$-metric on the space of shift-invariant measures (see \cite{Ornstein1974,Shields}). We denote Ornstein's $\dbar$-metric by $\dbarm$ to avoid confusion with the pseudometric $\dbar$ on $A^\Z$ as discussed in \S \ref{sec:Besicovitch}. We recall its definition below. To our best knowledge, the metric $\bar{\rho}$ was not studied before. Nevertheless, some generalisations of Ornstein's $\dbarm$-distance similar to $\bar{\rho}$ have been considered before. R. M. Gray, D. L. Neuhoff and P. C. Shields in \cite{GNS} used one of these generalisations in applications to coding theory. The results of \cite{GNS} are phrased in the language of stochastic processes, rather than in that of homeomorphisms and their invariant measures. In particular, $\dbarm$ is viewed there as a distance between finite valued discrete-time stochastic processes, and its generalisation in \cite{GNS} is a metric for discrete-time stochastic processes having values in separable metric spaces. Another generalisation was provided by Schwarz \cite{Schwarz}, who also used the framework of discrete time processes to prove some results of D. S. Ornstein without invoking an approximation by discrete partitions. 

Assume that $(X,T)$ and $(Y,S)$ are topological dynamical systems. By $\pi^X$ (respectively, $\pi^Y$) we denote the projection of $X\times Y$ onto $X$ (respectively, $Y$). If $\lambda$ is a measure on $X\times Y$, then we define the push-forward measure $\pi^Z_*(\lambda)$ on $Z=X$ (respectively, $Y$) by $\pi^Z_*(\lambda)(B)=\lambda((\pi^Z)^{-1}(B))$. A \emph{joining} of $\mu\in \MT(X)$ and $\nu \in \mathcal{M}_S(Y)$ is a $T\times S$-invariant measure $\lambda$ on $X\times Y$ such that $\pi^X_*(\lambda)=\mu$ and $\pi^Y_*(\lambda)=\nu$. Let $J(\mu,\nu)$ stand for the set of all joinings of $\mu$ and $\nu$. This is a nonempty, closed, and convex subset of $\mathcal{M}_{T\times S}(X\times Y)$. Let $F\colon X\times Y\to\C$ be a continuous function. We define
\begin{equation}\label{def:Fbar}
\bar{F}(\mu,\nu)=\inf_{\lambda\in J(\mu,\nu)}\int_{X\times Y} F(x,y) \text{ d}\lambda(x,y).    
\end{equation}

Note that the map $J(\mu,\nu)\ni\lambda\mapsto \int_{X\times Y} F(x,y) \text{ d}\lambda(x,y)\in\C$ is convex and continuous. Hence, the infimum in \eqref{def:Fbar} is attained at some extreme point of $
J(\mu,\nu)$. Furthermore, if $\mu$ and $\nu$ are both ergodic, then the set $J^e(\mu,\nu)$ of extreme points of $J(\mu,\nu)$ consists of ergodic $T\times S$-invariant measures.

Plugging $(Y,S)=(X,T)$ and $F=\rho$ (where $\rho$ is, as usual, a compatible metric on $X$) into \eqref{def:Fbar} we obtain a function $\bar{\rho}$ given for $\mu,\nu\in\MT(X)$ by 
\begin{equation}\label{def:rho-bar}
\bar{\rho}(\mu,\nu)=\inf_{\lambda\in J(\mu,\nu)}\int_{X\times X} \rho(x,y) \text{ d}\lambda(x,y). 
\end{equation}
It is easy to see (cf. the proof of \cite[Theorem 21.1.2]{Garling}) that $\bar{\rho}$ is a metric on $\MT(X)$ (the proof of the triangle inequality hinges on the gluing lemma \cite[Theorem 16.11.1]{Garling}).
The topology induced by $\bar{\rho}$ on $\MT(X)$ is stronger than the weak$^*$ topology on $\MT(X)$, because for every $\mu,\nu\in\MT(X)$ we have 
$W_1(\mu,\nu)\le \bar{\rho}(\mu,\nu)$, where $W_1$ stands for the Wasserstein metric inducing the weak$^*$ topology (see \cite{Garling}).  We record this fact as a remark for further reference.
\begin{rem}\label{rem:weakstar}
If  $\mu\in\MT(X)$ and $(\mu_n)_{n\ge 1}\subseteq \MT(X)$ is a  sequence such that $\bar{\rho}(\mu_n,\mu)\to 0$ as $n\to\infty$,
then $\mu_n$ converges to $\mu$ in the weak$^*$ topology.
\end{rem}

We  recall the definition of Ornstein's metric $\dbarm$. Note that Ornstein's d-bar distance $\dbarm$ is usually denoted by $\dbar$ in the literature, but we frequently use the latter symbol for other purposes, so we need to distinguish between $\dbar$ and $\dbarm$.  
Fix an alphabet $A$ and define $d_0\colon A^\Z\times A^\Z\to\{0,1\}$ by
\[
d_0((x_i)_{i\in\Z},(y_i)_{i\in\Z})=\begin{cases}
    1, &\text{ if }x_0\neq y_0,\\
    0, &\text{ otherwise}.
\end{cases}
\]
Then $d_0$ is a continuous function, so we take $(Y,S)=(X,T)=(A^\Z,\sigma)$ and $F=d_0$ in \eqref{def:Fbar} to get a function given for
$\mu,\nu\in\Ms(A^\Z)$ by
\begin{equation}\label{def:dbarm}
\dbarm(\mu,\nu)=\bar{d}_0(\mu,\nu)=\inf_{\lambda\in J(\mu,\nu)}\int_{A^\Z\times A^\Z} d_0(x,y) \text{ d}\lambda(x,y). 
\end{equation}

\begin{thm}\label{thm:gen-vs-point-from-supp} Let $(X,T)$ and $(Y,S)$ be topological dynamical systems and $F\colon X\times Y\to\R$ be continuous. 
Assume that $(n_k)_{k=1}^\infty\subseteq\N$ and $n_k\nearrow \infty$ as $k\to\infty$. Suppose $x\in X$ (respectively, $y\in Y$) is quasi-generic along $(n_k)_{k=1}^\infty$ for $\mu\in\MT(X)$ (respectively, $\nu\in\MS(Y)$). Then  
\begin{equation}\label{ineq:F-bar}
   \bar{F}(\mu,\nu) \le \liminf_{k\to\infty}
\frac{1}{n_k} \sum_{j=0}^{n_k-1} F(T^j(x),S^j(y)).
  \end{equation}
If, in addition, $\mu$ and $\nu$ are ergodic and if $X'\subseteq X$ and $Y'\subseteq Y$ are Borel sets such that $\mu(X')=\nu(Y')=1$, then
there exist Borel sets $X''\subseteq X'$ and $Y''\subseteq Y'$ such that $\mu(X'')=\nu(Y'')=1$ and for every $x''\in X''$ there exists $y''\in Y''$ such that
\begin{equation}\label{eq:good-lim}
\lim_{n\to\infty}\frac1n \sum_{j=0}^{n-1} F(T^j(x''),S^j(y'')) 
 = \bar{F}(\mu,\nu). 
\end{equation}

\end{thm}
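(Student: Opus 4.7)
The plan is to handle the two claims separately. For the inequality \eqref{ineq:F-bar}, I will produce a joining of $\mu$ and $\nu$ as a weak$^*$ limit of empirical measures along the diagonal orbit of $(x,y)$. For the realisation of $\bar F(\mu,\nu)$ as an honest pointwise Birkhoff limit, the idea is to pick an ergodic joining that attains the infimum defining $\bar F(\mu,\nu)$ and then use a disintegration argument to extract the desired full-$\mu$-measure set of initial points $x''$.

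For the first part, I consider the empirical measures on $X\times Y$ given by
\[
\lambda_k = \frac{1}{n_k}\sum_{j=0}^{n_k-1}\delta_{(T^j(x),S^j(y))}, \quad k\ge 1.
\]
First I extract a subsequence $(k_i)$ along which $\frac{1}{n_{k_i}}\sum_{j=0}^{n_{k_i}-1} F(T^j(x),S^j(y))$ converges to the liminf appearing in \eqref{ineq:F-bar}, and then, by compactness of $\cM(X\times Y)$ in the weak$^*$ topology, I refine it further so that $\lambda_{k_i}\to\lambda$ weak$^*$ for some $\lambda\in\cM(X\times Y)$. A standard Krylov--Bogolyubov-type calculation (noting that $\int g\circ(T\times S)\,d\lambda_k - \int g\,d\lambda_k$ is of order $1/n_k$ for each $g\in C(X\times Y)$) shows $\lambda$ is $T\times S$-invariant. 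Since $\pi^X_*\lambda_{k_i}$ is exactly the empirical measure of $x$ along $(n_{k_i})$, the quasi-genericity of $x$ forces $\pi^X_*\lambda_{k_i}\to\mu$, and similarly $\pi^Y_*\lambda_{k_i}\to\nu$; passing to the limit using continuity of the pushforwards yields $\lambda\in J(\mu,\nu)$. Continuity of $F$ then gives $\int F\,d\lambda = \lim_i \int F\,d\lambda_{k_i}$, and \eqref{ineq:F-bar} follows from the definition of $\bar F(\mu,\nu)$.

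For the second part, I assume $\mu\in\MTe(X)$ and $\nu\in\MSe(Y)$. As remarked by the authors after \eqref{def:Fbar}, the affine continuous functional $\lambda\mapsto\int F\,d\lambda$ on the compact convex set $J(\mu,\nu)$ attains its infimum at some extreme point $\lambda^*\in J^e(\mu,\nu)$, and under the ergodicity assumption $\lambda^*$ is ergodic for $T\times S$. Applying Birkhoff's ergodic theorem to $F$ on $(X\times Y,T\times S,\lambda^*)$ produces a Borel set $Z_0\subseteq X\times Y$ with $\lambda^*(Z_0)=1$ on which the Birkhoff averages of $F$ converge to $\int F\,d\lambda^* = \bar F(\mu,\nu)$. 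Because $\pi^X_*\lambda^* = \mu$ and $\pi^Y_*\lambda^* = \nu$, the Borel set $Z := Z_0\cap(X'\times Y')$ still satisfies $\lambda^*(Z)=1$.

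The final step is to convert $Z$ into a Borel set of admissible first coordinates. I will disintegrate $\lambda^* = \int\lambda^*_x\,d\mu(x)$ over $\mu$, which exists because $X$ and $Y$ are Polish. The Borel section $Z^x = \{y:(x,y)\in Z\}$ then satisfies $\int \lambda^*_x(Z^x)\,d\mu(x) = \lambda^*(Z) = 1$, so the Borel set
\[
X'' := \{x\in X': \lambda^*_x(Z^x)=1\}
\]
has $\mu(X'')=1$. Setting $Y'' := Y'$, for each $x''\in X''$ the fibre $Z^{x''}$ is non-empty, so I may pick any $y''\in Z^{x''}\subseteq Y'=Y''$; since $(x'',y'')\in Z_0$, equation \eqref{eq:good-lim} follows. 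The main point requiring care is Borel measurability of $x\mapsto \lambda^*_x(Z^x)$, which is where the argument would break down without the disintegration theorem on Polish spaces; once that is in hand, the set $X''$ is genuinely Borel (not merely $\mu$-measurable) as required by the statement.
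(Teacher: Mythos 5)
Your proof is correct, and the two halves compare differently with the paper. The first half (constructing a joining as a weak$^*$ limit of the empirical measures $\frac{1}{n_k}\sum_{j<n_k}\delta_{(T^j(x),S^j(y))}$ along a subsequence realising the liminf, checking $T\times S$-invariance and that the marginals are $\mu$ and $\nu$ by quasi-genericity) is essentially identical to the paper's argument. The second half takes a genuinely different route at the final, measure-theoretic step. The paper also starts from an ergodic joining $\lambda$ attaining $\bar F(\mu,\nu)$ and the full-$\lambda$-measure set $L'=\bigl((\pi^X)^{-1}(X')\cap(\pi^Y)^{-1}(Y')\bigr)\cap\Gen(\lambda)$, but then produces the Borel sets $X''$, $Y''$ by inner regularity: it chooses compact sets $K_n'\subseteq L'$ with $\lambda(K_n')>1-1/n$ and takes $X''=\bigcup_n\pi^X(K_n')$, $Y''=\bigcup_n\pi^Y(K_n')$, which are $\sigma$-compact, hence Borel; this sidesteps the fact that projections of Borel sets are in general only analytic. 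You instead invoke Rokhlin disintegration of $\lambda^*$ over $\mu$ and define $X''$ as the set of $x\in X'$ whose conditional measure gives full mass to the fibre of $Z=Z_0\cap(X'\times Y')$, taking $Y''=Y'$; the Borel measurability of $x\mapsto\lambda^*_x(Z^x)$, which you correctly flag as the delicate point, is exactly what the disintegration theorem on standard Borel spaces supplies, and nonemptiness of the fibre follows from $\lambda^*_x(Z^x)=1>0$. Both arguments are valid: the paper's is more elementary (only regularity of Borel measures and continuity of projections) and yields a symmetric pair of $\sigma$-compact full-measure sets, while yours trades that for the heavier disintegration machinery but is arguably more direct and keeps $Y''$ as large as possible.
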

\begin{proof}
Suppose $x\in X$ (respectively, $y\in Y$) is quasi-generic along $(n_k)_{k=1}^\infty$ for $\mu\in\MT(X)$ (respectively, $\nu\in\MS(Y)$). Without loss of generality,
we assume
\begin{equation}\label{eq:hat_n_k}
\liminf_{k\to\infty}
\frac{1}{n_k} \sum_{j=0}^{n_k-1} F(T^j(x),S^j(y))=\lim_{k\to \infty}\frac{1}{{n}_k}\sum_{j=0}^{{n}_k-1} F(T^j(x),S^j(y)).\end{equation}
For $k\ge 1$ consider the probability measure
$\delta_k$ on $X\times Y$ given  by 
\begin{equation}\label{eq:delta_k}
        \delta_k = \dfrac{1}{{n}_k}\sum_{j=0}^{{n}_k-1}\delta_{ (T\times S)^j(x,y)}. 
    \end{equation}
  Since $\mathcal{M}(X\times Y)$ is compact, without loss of generality, we assume that $(\delta_k)_{k=1}^\infty$ converges to    $\lambda\in\mathcal{M}_{T \times S} \left( X\times Y \right)$ (otherwise, we pass to a converging subsequence and rename it $(\delta_k)_{k=1}^\infty$). Combining \eqref{eq:hat_n_k} with \eqref{eq:delta_k} we obtain
\begin{equation}\label{ineq:F-liminf}
    \int_{X\times Y} F \text{ d}\lambda=\int_{X\times Y}  F \text{ d}(\lim_{k \rightarrow \infty} \delta_{k})  =\liminf_{k\to\infty}
\frac{1}{n_k} \sum_{j=0}^{n_k-1} F(T^j(x),S^j(y)).
    \end{equation}
Since $\lambda$ is a joining of $\mu$ and $\nu$,  \eqref{def:Fbar} together with \eqref{ineq:F-liminf} yield \eqref{ineq:F-bar}.

Now, assume that $\mu$ and $\nu$ are ergodic. Consider Borel subsets $X'\subseteq X$ and $Y'\subseteq Y$ such that $\mu(X')=
\nu(Y')=1$.   Let $\lambda\in J(\mu,\nu)$ be such that $\bar{F}(\mu,\nu)=\int F\text{ d}\lambda$. Since $J(\mu,\nu)$ is nonempty, convex and compact, the infimum in \eqref{def:Fbar} must be attained at an extreme point. As $\mu$ and $\nu$ are ergodic, the set of extreme points $J^e(\mu,\nu)$ of $J(\mu,\nu)$ consists of ergodic joinings, so $\lambda$ must be ergodic. 

We denote the set of $\lambda$-generic points  by $\Gen(\lambda)$. 
 Let
\begin{equation}\label{def:Lp}
L'=\left((\pi^X)^{-1} (X')\cap (\pi^Y)^{-1} (Y')\right) \cap \Gen(\lambda)\subseteq X'\times Y'.
\end{equation}
Note that  $\lambda\big((\pi^X)^{-1} (X')\big)=\lambda\big((\pi^Y)^{-1}(Y')\big)=1$. 
It follows that $\lambda\left(L' \right) = 1 $.  If  $(x',y')\in L'$, then by the ergodic theorem 
we have
     \[  \lim_{n\rightarrow \infty}\dfrac{1}{n}\sum_{j=0}^{n-1} F\left( T^j(x'), S^j(y') \right)  =\int_{X\times Y} F \text{ d}\lambda. \]
Since $\lambda$ is a regular measure, for every $n\ge 1$ there is a compact set $K_n'\subseteq L'$ with $\lambda(K_n')>1-1/n$. Let
$X_n''=(\pi^X)(K_n')$ and $Y_n''=(\pi^Y)(K_n')$. Clearly, $X_n''\subseteq X'$ and $Y_n''\subseteq Y'$ are compact sets satisfying
$\mu(X_n'')=\lambda\big((\pi^X)^{-1}(X''_n)\big)\ge \lambda(K_n')>1-1/n$ and
$\nu(Y_n'')=\lambda\big((\pi^Y)^{-1}(Y''_n)\big)\ge \lambda(K_n')>1-1/n$. Denote
\[
X''=\bigcup_{n=1}^\infty X_n''
\text{ and }Y''=\bigcup_{n=1}^\infty Y_n''.\]
Now, $X''\subseteq X'$ and $Y''\subseteq Y'$ are $\sigma$-compact, hence Borel measurable sets with full measure. Furthermore, for every $x''\in X''$ there exists $y''\in Y''$ such that \eqref{eq:good-lim} holds.
\end{proof}

Taking $(Y,S)=(X,T)$ and $F=\rho$ (the metric on $X$), we obtain the following as a consequence of Theorem \ref{thm:gen-vs-point-from-supp}.

\begin{cor}\label{cor:dbar-closeness-for-regular-points}
Let $\mu,\nu\in \MT(X)$. 
\begin{enumerate}
\item \label{dbar_i} If $(n_k)_{k=1}^\infty\subseteq\N$ is a strictly increasing sequence, $x\in X$ (respectively, $y\in Y$) is quasi-generic along $(n_k)_{k=1}^\infty$ for $\mu$ (respectively, $\nu$), and
\[\liminf_{k\to\infty}
\frac{1}{n_k} \sum_{j=0}^{n_k-1} \rho(T^j(x),
T^j(y))\le\eps,\]
then $\bar{\rho}(\mu,\nu)\le \eps$.
\item \label{dbar_ii} 
Let $X_\mu\subseteq \Gen(\mu)$ and $X_\nu\subseteq\Gen(\nu)$ be Borel sets such that $\mu(X_\mu)=\nu(X_\nu)=1$. 
If $\bar{\rho}(\mu,\nu)<\eps$ for some $\eps>0$, then there exist Borel sets $X'_\mu\subseteq X_\mu$, and $X'_\nu \subseteq X_\nu$ such that $\mu(X'_\mu)=\nu(X'_\nu)=1$ and for every $x' \in X'_\mu$ there exists $y'\in X'_\nu$ 
with  $D_B(x',y')\le \eps$.
\end{enumerate}
\end{cor}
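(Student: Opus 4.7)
Both claims will be immediate consequences of Theorem \ref{thm:gen-vs-point-from-supp} applied with $(Y,S)=(X,T)$ and $F=\rho$, which is a continuous real-valued function on $X\times X$. The only subtlety is keeping track of which generic and quasi-generic assumptions give which conclusion.

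For \eqref{dbar_i}, plug $x$, $y$, and $(n_k)$ into the inequality \eqref{ineq:F-bar} supplied by Theorem \ref{thm:gen-vs-point-from-supp}. Because $F=\rho$ we have $\bar F(\mu,\nu)=\bar\rho(\mu,\nu)$, so
\[
\bar\rho(\mu,\nu)\;\le\;\liminf_{k\to\infty}\frac{1}{n_k}\sum_{j=0}^{n_k-1}\rho(T^j(x),T^j(y))\;\le\;\eps,
\]
which is exactly the desired bound. No further work is needed.

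For \eqref{dbar_ii}, the first point to observe is that the hypothesis $X_\mu\subseteq\Gen(\mu)$ with $\mu(X_\mu)=1$ forces $\mu(\Gen(\mu))=1$, and Oxtoby's criterion then guarantees that $\mu$ is ergodic; the same argument applies to $\nu$. Hence we can invoke the second (ergodic) half of Theorem \ref{thm:gen-vs-point-from-supp} with $F=\rho$, $(Y,S)=(X,T)$, $X'=X_\mu$, and $Y'=X_\nu$. This produces Borel sets $X'_\mu\subseteq X_\mu$ and $X'_\nu\subseteq X_\nu$ with $\mu(X'_\mu)=\nu(X'_\nu)=1$ such that for every $x'\in X'_\mu$ there is $y'\in X'_\nu$ with
\[
\lim_{n\to\infty}\frac{1}{n}\sum_{j=0}^{n-1}\rho(T^j(x'),T^j(y'))\;=\;\bar\rho(\mu,\nu).
\]
Since the limit exists, the $\limsup$ in the definition \eqref{eq:D_B} of $D_B$ coincides with it, so $D_B(x',y')=\bar\rho(\mu,\nu)<\eps$, giving exactly the required bound.

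The only step that needs any thought at all is the ergodicity observation in \eqref{dbar_ii}, which is needed to certify that the extreme joining realising $\bar\rho(\mu,\nu)$ can be taken ergodic and hence its own generic points have full measure; everything else is a mechanical translation of Theorem \ref{thm:gen-vs-point-from-supp} to the metric $\rho$. I do not foresee a genuine obstacle, only the need to state the two items clearly so that part \eqref{dbar_i} uses the liminf bound \eqref{ineq:F-bar} and part \eqref{dbar_ii} uses the pointwise equality \eqref{eq:good-lim}.
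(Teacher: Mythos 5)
Your proposal is correct and follows exactly the paper's route: the corollary is stated there as an immediate consequence of Theorem \ref{thm:gen-vs-point-from-supp} with $(Y,S)=(X,T)$ and $F=\rho$, with part \eqref{dbar_i} coming from \eqref{ineq:F-bar} and part \eqref{dbar_ii} from \eqref{eq:good-lim}. Your explicit remark that the full-measure hypothesis on $\Gen(\mu)$ and $\Gen(\nu)$ forces ergodicity (via Oxtoby's criterion), so the ``furthermore'' part of the theorem applies, is exactly the detail the paper leaves implicit.
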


\begin{lem}
The metric $\bar{\rho}$ is complete.
\end{lem}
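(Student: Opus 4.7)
The strategy is the standard construction of a limit of a fast Cauchy sequence in a Wasserstein-type metric, adapted to the invariant setting. Given a $\bar{\rho}$-Cauchy sequence $(\mu_n)\subset \MT(X)$, it suffices to show that some subsequence converges. Thus, passing to a subsequence, we may assume $\bar{\rho}(\mu_k,\mu_{k+1})<2^{-k}$ for every $k\ge 1$. Because the infimum in \eqref{def:rho-bar} is attained (as noted right after \eqref{def:Fbar}), for each $k$ we may fix a $T\times T$-invariant joining $\lambda_k\in J(\mu_k,\mu_{k+1})$ with $\int \rho\,d\lambda_k<2^{-k}$.

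The plan is to glue the $\lambda_k$'s into a single measure $\Lambda$ on $X^{\mathbb{N}}$. Using the gluing lemma inductively (as in \cite[Theorem 16.11.1]{Garling}), I would build, for each $n\ge 1$, a measure $\Lambda_n$ on $X^n$ whose $i$-th one-dimensional marginal is $\mu_i$ and whose $(i,i+1)$-marginal is $\lambda_i$, moreover $\Lambda_n$ is invariant under the product transformation $T\times\cdots\times T$ (using that each $\lambda_i$ is $T\times T$-invariant together with the disintegration-based construction of the glued measure). The $\Lambda_n$ are consistent by construction, so Kolmogorov's extension theorem provides $\Lambda\in\mathcal{M}(X^{\mathbb{N}})$, invariant under the shift $\widetilde{T}:=T^{\mathbb{N}}$, whose projection on the $k$-th coordinate is $\mu_k$ and whose $(k,k+1)$-marginal is $\lambda_k$.

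Next, I would use Fubini and the geometric bound to pass to the limit inside $X$:
\begin{equation*}
\int_{X^{\mathbb{N}}}\sum_{k=1}^{\infty}\rho(x_k,x_{k+1})\,d\Lambda(\bar x)=\sum_{k=1}^{\infty}\int\rho\,d\lambda_k<\sum_{k=1}^{\infty}2^{-k}=1.
\end{equation*}
Hence for $\Lambda$-a.e. $\bar x=(x_k)_{k=1}^\infty$ the series $\sum_k\rho(x_k,x_{k+1})$ converges, so $(x_k)$ is Cauchy in the compact (hence complete) metric space $X$. Setting $\phi(\bar x):=\lim_k x_k$ on the Borel full-measure set where the limit exists, $\phi$ is Borel measurable, and I would define $\mu:=\phi_\ast\Lambda$. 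Since $\phi\circ\widetilde{T}=T\circ\phi$ $\Lambda$-a.e.\ and $\Lambda$ is $\widetilde{T}$-invariant, we get $\mu\in\MT(X)$. Finally, the pushforward of $\Lambda$ under $\bar x\mapsto(x_k,\phi(\bar x))$ is a joining of $\mu_k$ and $\mu$, and the triangle inequality for $\rho$ together with the bound above yield
\begin{equation*}
\bar{\rho}(\mu_k,\mu)\le\int\rho(x_k,\phi(\bar x))\,d\Lambda\le\int\sum_{j\ge k}\rho(x_j,x_{j+1})\,d\Lambda\le\sum_{j\ge k}2^{-j}\xrightarrow[k\to\infty]{}0,
\end{equation*}
which proves convergence of the subsequence, and hence of the original Cauchy sequence.

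The only delicate point I anticipate is the $\widetilde{T}$-invariance of the glued measure $\Lambda$. The construction of $\Lambda_n$ by iterated disintegration needs to be done in a way that is equivariant with respect to the product maps; this follows from the essential uniqueness of the disintegration combined with the $T\times T$-invariance of each $\lambda_k$. Once invariance of $\Lambda$ is in hand, the rest is an application of Fubini and the definition of $\bar{\rho}$, and the argument is parallel to the standard proof of completeness of the Wasserstein space (as in \cite[Theorem 21.1.2]{Garling}).
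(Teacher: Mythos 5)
Your argument is correct, but it follows a genuinely different route from the paper. The paper exploits compactness twice: it extracts a weak$^*$-convergent subsequence $\mu_{m}\to\mu$ in the compact space $\MT(X)$, and then, for a fixed $m$ with $\bar{\rho}(\mu_m,\mu_\ell)<\eps$ for all large $\ell$, it takes near-optimal joinings $\lambda_\ell\in J(\mu_m,\mu_\ell)$ and uses compactness of $\cM_{T\times T}(X\times X)$ to pass to a limit joining of $\mu_m$ and $\mu$, which immediately witnesses $\bar{\rho}(\mu_m,\mu)\le\eps$; no gluing, no infinite product space, no construction of a limit point inside $X$. You instead run the classical completeness proof for Wasserstein-type distances: fast Cauchy subsequence, iterated gluing of the optimal invariant joinings into a consistent family on $X^n$, Kolmogorov extension to a measure $\Lambda$ on $X^{\N}$ invariant under the product map $T^{\N}$ (calling it a ``shift'' is a slip of terminology, but your definition is the right one), Borel--Cantelli/Fubini to get $\Lambda$-a.e.\ convergence of coordinates, and the pushforward under the limit map as the candidate $\mu$, with the coupling $\bar x\mapsto(x_k,\phi(\bar x))$ giving the quantitative bound $\bar{\rho}(\mu_k,\mu)\le\sum_{j\ge k}2^{-j}$. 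The delicate point you flag -- invariance of the glued measures -- is handled exactly as you suggest: the relative product over the common marginal is $T\times\cdots\times T$-invariant because essential uniqueness of the disintegration together with $T\times T$-invariance of each $\lambda_k$ gives $T_*\lambda_k^y=\lambda_k^{Ty}$ for a.e.\ $y$, and $\mu_{k+1}$ is $T$-invariant; this is the standard relatively independent joining construction. The trade-off: the paper's proof is much shorter and leans on compactness of $X$ (hence of $\MT(X)$ and of the joining spaces), while your proof avoids extracting a weak$^*$ limit altogether, yields an explicit rate along the fast subsequence, and would survive in settings where $X$ is merely Polish, at the cost of the heavier gluing/extension machinery and the invariance bookkeeping.
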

\begin{proof} 
Let $(\mu_m)_{m=1}^\infty\subseteq\MT(X)$ be a Cauchy sequence with respect to $\bar{\rho}$. Since $\MT(X)$ with the weak$^*$ topology is compact, passing to a subsequence if necessary, we assume that $(\mu_m)_{m=1}^\infty$ is weak$^*$ convergent and we denote its limit by $\mu$. It remains to show that $\bar{\rho}(\mu_m,\mu)\to 0$ as $m\to \infty$. Fix $\eps>0$. Let $M\in\N$ be such that for every $i,j\ge M$ we have $\bar{\rho}(\mu_i,\mu_j)<\eps$. We claim that for every $m\ge M$ we have $\bar{\rho}(\mu,\mu_m)\le\eps$. To this end we fix $m\ge M$ and for every $\ell>m$ use the definition of $\bar{\rho}$ to find $\lambda_\ell\in J(\mu_m,\mu_\ell)$ such that
\[
\bar{\rho}(\mu_m,\mu_\ell)\le \int_{X\times X}\rho(x,y)\text{ d}\lambda_\ell(x,y)<\eps.
\]
Since the space $\cM_{T\times T}(X\times X)$ is compact, the sequence 
$(\lambda_\ell)_{\ell=m+1}^\infty$ has a convergent subsequence $\lambda\in \cM_{T\times T}(X\times X)$, which is easily seen to be a joining of $\mu_m$ and $\mu$ witnessing
\[
\bar{\rho}(\mu_m,\mu)\le \int_{X\times X}\rho(x,y)\text{ d}\lambda(x,y)\le\eps.\qedhere
\]
\end{proof}
The following proof is a minor modification of the proof of \cite[Thm. 7.8]{Rudolph}. 

\begin{lem}\label{lem:erg-dbar-closed}
If $(\mu_m)_{m=1}^\infty\subseteq\MTe(X)$ and $\mu\in\MT(X)$ are such that $\bar{\rho}(\mu_m,\mu)\to 0$ as $m\to\infty$, then $\mu$ is ergodic.
\end{lem}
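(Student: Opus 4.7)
The plan is to argue by contradiction, using the ergodic decomposition of $\mu$ together with the ergodic decomposition of near-optimal joinings witnessing $\bar{\rho}(\mu_m,\mu)$.

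Suppose $\mu$ is not ergodic and write its ergodic decomposition $\mu = \int_{\MTe(X)} \nu\, d\tau(\nu)$, where $\tau$ is a Borel probability measure on $\MTe(X)$ that is not a Dirac mass. For each $m\ge 1$ I would pick $\lambda_m \in J(\mu_m,\mu)$ with $\int_{X\times X}\rho\, d\lambda_m \le \bar{\rho}(\mu_m,\mu) + 1/m$, so that $\int \rho\, d\lambda_m \to 0$. I would then consider the ergodic decomposition $\lambda_m = \int \theta\, dQ_m(\theta)$ of $\lambda_m$ as a $T\times T$-invariant measure on $X\times X$. Since $\pi^X$ and $\pi^Y$ are equivariant, both $\pi^X_*(\theta)$ and $\pi^Y_*(\theta)$ are ergodic for $Q_m$-a.e.\ $\theta$. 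The identity $\mu_m = \int \pi^X_*(\theta)\, dQ_m(\theta)$ together with ergodicity of $\mu_m$ forces $\pi^X_*(\theta) = \mu_m$ for $Q_m$-a.e.\ $\theta$ by the uniqueness of the ergodic decomposition; applying the same principle to $\mu = \int \pi^Y_*(\theta)\, dQ_m(\theta)$ gives that the push-forward of $Q_m$ by $\theta \mapsto \pi^Y_*(\theta)$ equals $\tau$.

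Combining these, for $Q_m$-a.e.\ $\theta$ the measure $\theta$ is a joining of $\mu_m$ and $\pi^Y_*(\theta)$, so by the definition of $\bar{\rho}$ we have $\int \rho\, d\theta \ge \bar{\rho}(\mu_m, \pi^Y_*(\theta))$. Integrating against $Q_m$ and changing variables along $\theta\mapsto \pi^Y_*(\theta)$ yields
\[
\int \bar{\rho}(\mu_m,\nu)\, d\tau(\nu) \le \int \rho\, d\lambda_m \to 0 \quad \text{as } m\to\infty.
\]
Extracting a subsequence $(m_k)$, I may assume $\bar{\rho}(\mu_{m_k},\nu)\to 0$ for $\tau$-a.e.\ $\nu$. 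Fixing such a full-measure Borel set $E\subseteq\MTe(X)$, for any $\nu,\nu'\in E$ the triangle inequality gives $\bar{\rho}(\nu,\nu') \le \bar{\rho}(\nu,\mu_{m_k}) + \bar{\rho}(\mu_{m_k},\nu') \to 0$, hence $\nu = \nu'$. So $\tau$ is concentrated on a single point of $\MTe(X)$, contradicting the assumption that it is not a Dirac mass.

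I expect the main obstacle to be the careful bookkeeping around the ergodic decomposition of $\lambda_m$ and the two applications of uniqueness: one must verify measurability of the marginal maps $\theta\mapsto \pi^X_*(\theta)$ and $\theta\mapsto \pi^Y_*(\theta)$ on the space of ergodic $T\times T$-invariant measures on $X\times X$, and then invoke uniqueness of the ergodic decomposition both for $\mu_m$ (to get $\pi^X_*(\theta) = \mu_m$ for $Q_m$-a.e.\ $\theta$) and for $\mu$ (to identify the push-forward of $Q_m$ under the second marginal with $\tau$). Once this setup is in place, the remainder is just an averaged triangle inequality in $\bar{\rho}$.
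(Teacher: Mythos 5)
Your proof is correct and follows essentially the same route as the paper's: both decompose a (near-)optimal joining of $\mu_m$ and $\mu$ ergodically, use ergodicity of $\mu_m$ and uniqueness of the ergodic decomposition to identify the fiber marginals (first marginal $\mu_m$, second-marginal pushforward equal to the decomposition of $\mu$), and finish with a triangle-inequality argument using that $\bar{\rho}$ is a metric. The only differences are bookkeeping: the paper fixes $\eps$, uses one large $m$ and Markov's inequality to compare each ergodic component directly with $\mu$, whereas you integrate to get $\int\bar{\rho}(\mu_m,\nu)\,d\tau(\nu)\to 0$, pass to an a.e.\ convergent subsequence, and compare two generic components with each other.
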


\begin{proof}
    Let $(\mu_m)_{m=1}^\infty\subseteq\MTe(X)$ and $\mu\in\MT(X)$ be such that 
    \begin{equation}\label{conv}
        \bar{\rho}(\mu_m,\mu)\to 0\text{ as }m\to\infty. 
    \end{equation}Fix $\eps>0$ and take $m$ large enough to guarantee that $\bar{\rho}(\mu_m,\mu)<\eps^2$. Let $\Lambda\in J(\mu_m,\mu)$ be such that
\[
    \int_{X\times X} \rho(x,y)\text{ d}\Lambda(x,y)<\eps^2.      
  \]
    Let $\hat\Lambda\in \cM(\cM( X\times X)$ be the ergodic decomposition of $\Lambda$, that is
        \begin{equation}\label{ineq:eps-sq}
    \int_{X\times X} \rho(x,y)\text{ d}\Lambda(x,y)=\int_{\cM^e_{T\times T}(X\times X)}\int_{X\times X} \rho(x,y)\text{ d}\lambda(x,y) \text{ d}\hat\Lambda(\lambda)<\eps^2.
    \end{equation}
Markov's inequality and \eqref{ineq:eps-sq} imply that
\begin{equation}\label{ineq:eps-sq-consequence}
\hat\Lambda\left(\left\{\lambda\in\cM_{T\times T}^e(X\times X): \int_{X\times X} \rho(x,y)\text{ d}\lambda(x,y) \ge \eps\right\}\right)<\eps.    
\end{equation}
    Since $\mu_m$ is ergodic, we see that $\hat\Lambda$-a.e.  $\lambda$ is an ergodic $T\times T$-invariant measure having $\mu_m$ as the first marginal. 
Let $\pi\colon X\times X\to X$ be the projection on the second coordinate. Write $\pi_*\colon\cM(X\times X)\to\cM(X)$ and $\pi_{**}\colon\cM(\cM(X\times X))\to\cM(\cM(X))$ for the pushforward maps. Note that using the formula for the integral of the pushforward measures for every $f\in C(X)$ we have 
\begin{multline*}
\int_X f\text{ d}\mu=\int_{X^2} f\circ\pi \text{ d}\Lambda =\int_{\cM^e_{T\times T}(X^2)} \int_{X^2} f\circ\pi \text{ d}\lambda\text{ d}\hat\Lambda(\lambda)%
=\\=\int_{\MTe(X)} \int_X f \text{ d}\pi_*(\lambda)\text{ d}\pi_{**}\hat\Lambda(\pi_*(\lambda)).
\end{multline*}

In other words, $\hat\zeta=\pi_{**}\hat\Lambda$ is the ergodic decomposition of $\mu$.
We claim that
\begin{equation}\label{claim:zeta}
\hat{\zeta}\left(\{\zeta\in\MTe(X):\bar{\rho}(\mu_m,\zeta)\ge\eps\}\right)< \eps.
\end{equation}
Indeed, if $\zeta\in\MTe(X)$, then $\bar{\rho}(\mu_m,\zeta)\ge \eps$ means that for every $\eta\in J^e(\mu_m,\zeta)$ 
we have
\[
\int_{X\times X}\rho(x,y)\text{ d}\eta\ge \eps.
\]
By definition
\[
\hat{\zeta}\left(\{\zeta\in\MTe(X):\bar{\rho}(\mu_m,\zeta)\ge\eps\}\right)=\hat\Lambda\left(\pi^{-1}_{*}\left(\{\zeta\in\MTe(X):\bar{\rho}(\mu_m,\zeta)\ge\eps\}\right)\right).
\]
But $\hat\Lambda$-a.e. $\lambda$ in $\pi^{-1}_*(\{\zeta\})$ belongs to $J^e( \mu_m,\zeta)$, so \eqref{claim:zeta} holds by \eqref{ineq:eps-sq-consequence}. Clearly $\bar\rho(\mu_m,\zeta)\geq \bar\rho(\mu,\zeta)-\bar\rho(\mu,\mu_m)\geq \bar\rho(\mu,\zeta)-\varepsilon^2$. Therefore
\eqref{claim:zeta}
imply that
\[
\hat{\zeta}\left(\{\zeta\in\MTe(X):\bar{\rho}(\mu,\zeta)\ge\eps+\eps^2\}\right)< \eps.
\]
and therefore $\bar{\rho}(\mu,\zeta)=0$ for $\hat\zeta$ a.e. $\zeta$. But $\bar{\rho}$ is a metric on $\MT$, so 
we conclude $\hat{\zeta}$ must be a Dirac measure concentrated on a single point $\mu$, so $\mu$ has trivial ergodic decomposition, hence it is ergodic.
\end{proof}

It is convenient to reformulate Corollary \ref{cor:dbar-closeness-for-regular-points} in the following way.

\begin{cor} \label{cor:char-rho-bar-conv}   
Let $(\mu_m)_{m=1}^\infty\subseteq\MTe(X)$.  
The following conditions are equivalent:
\begin{enumerate}
    \item \label{cond:char-rho-bar-conv-i}
    There is $\mu\in\MT(X)$ such that $\bar{\rho}(\mu_m,\mu)\to 0$ as $m\to\infty$.
    \item \label{cond:char-rho-bar-conv-ii} There exists 
    $(x_m)_{m=1}^\infty\subseteq X$ and $x\in X$ such that for every $m\ge 1$ we have $x_m\in\Gen(\mu_m)$ and 
    \[
    \lim_{m\to\infty}D_B(x,x_m)=0.
    \]
    \item\label{cond:char-rho-bar-conv-iii} There exist a strictly increasing sequence $(n_k)\subseteq\N$, 
    a sequence $(y_m)_{m=1}^\infty\subseteq X$ such that for every $m\ge 1$ we have $y_m$  is quasi-generic for $\mu_m$ along $(n_k)$ and $y\in X$ such that
    \[
    \lim_{m\to\infty}D_B^{(n_k)}(y,y_m)=0.
    \]    
\end{enumerate}
If one, hence every, condition above holds, then $\mu$ is ergodic, $x\in\Gen(\mu)$, and $y$  is quasi-generic for $\mu$ along $(n_k)$.
In addition, we can guarantee that for every $m\ge 1$ it holds
    \[
D_B(x,x_m)=\lim_{n\to\infty}\frac1n\sum_{j=0}^{n-1}\rho(T^j(x),T^j(x_m)) = \bar{\rho}(\mu_m,\mu).
    \]
\end{cor}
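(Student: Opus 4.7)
The plan is to establish the cycle $(\ref{cond:char-rho-bar-conv-i})\Rightarrow(\ref{cond:char-rho-bar-conv-ii})\Rightarrow(\ref{cond:char-rho-bar-conv-iii})\Rightarrow(\ref{cond:char-rho-bar-conv-i})$, collecting the additional information (ergodicity of $\mu$, $x\in\Gen(\mu)$, quasi-genericity of $y$ along $(n_k)$, and the sharpened equality $D_B(x,x_m)=\bar{\rho}(\mu_m,\mu)$) along the way. The implication $(\ref{cond:char-rho-bar-conv-ii})\Rightarrow(\ref{cond:char-rho-bar-conv-iii})$ is immediate: take $n_k=k$, $y=x$, $y_m=x_m$; every generic point is quasi-generic along every strictly increasing sequence, and $D_B$ coincides with $D_B^{(n_k)}$ in this case.

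For $(\ref{cond:char-rho-bar-conv-iii})\Rightarrow(\ref{cond:char-rho-bar-conv-i})$, I would invoke Theorem \ref{thm:Besicovitch-quasi-genericity-and-ergodicity}: since each $y_m$ is quasi-generic along $(n_k)$ for the ergodic measure $\mu_m$ and $D_B^{(n_k)}(y,y_m)\to 0$, the point $y$ is quasi-generic along $(n_k)$ for some $\mu\in\MT(X)$, the sequence $(\mu_m)$ converges to $\mu$ in the weak$^*$ topology, and $\mu$ inherits ergodicity. To upgrade weak$^*$ convergence to $\bar{\rho}$-convergence, I apply Corollary \ref{cor:dbar-closeness-for-regular-points}(\ref{dbar_i}): for every $\eps>0$ there exists $m_0$ such that for $m\ge m_0$ one has $\liminf_{k\to\infty}\tfrac{1}{n_k}\sum_{j=0}^{n_k-1}\rho(T^j y,T^j y_m)\le D_B^{(n_k)}(y,y_m)<\eps$, yielding $\bar{\rho}(\mu_m,\mu)\le\eps$.

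For $(\ref{cond:char-rho-bar-conv-i})\Rightarrow(\ref{cond:char-rho-bar-conv-ii})$ together with the equality, I first apply Lemma \ref{lem:erg-dbar-closed} to conclude that $\mu$ is ergodic. Then, for each $m\ge 1$, I apply Theorem \ref{thm:gen-vs-point-from-supp} to $(Y,S)=(X,T)$, $F=\rho$, the ergodic measures $\mu,\mu_m$, and the full-measure sets $X'=\Gen(\mu)$, $Y'=\Gen(\mu_m)$. This furnishes Borel sets $X''_m\subseteq\Gen(\mu)$ and $Y''_m\subseteq\Gen(\mu_m)$, with $\mu(X''_m)=\mu_m(Y''_m)=1$, such that every $x'\in X''_m$ admits some $y'\in Y''_m$ satisfying $\lim_{n\to\infty}\tfrac{1}{n}\sum_{j=0}^{n-1}\rho(T^j x',T^j y')=\bar{\rho}(\mu,\mu_m)$. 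Setting $X^{*}=\bigcap_{m\ge 1}X''_m$, which still has full $\mu$-measure, I pick any $x\in X^{*}\subseteq\Gen(\mu)$ and, for each $m$, a corresponding $x_m\in Y''_m\subseteq\Gen(\mu_m)$. Then the $\limsup$ defining $D_B(x,x_m)$ is in fact a limit and equals $\bar{\rho}(\mu_m,\mu)\to 0$, delivering both (\ref{cond:char-rho-bar-conv-ii}) and the promised equality.

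The main technical obstacle is this last implication: a \emph{single} $x$ must serve as a partner for every $\mu_m$ simultaneously, and must moreover realise the exact value $\bar{\rho}(\mu_m,\mu)$ rather than a mere upper bound. Both features are secured by intersecting the countably many full-measure sets $X''_m$ provided by Theorem \ref{thm:gen-vs-point-from-supp}, whose strength (passing from a liminf upper bound to an actual limit attaining $\bar{\rho}(\mu,\mu_m)$) is precisely what makes the rigid equality possible.
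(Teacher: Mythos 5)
Your proposal is correct and follows essentially the same route as the paper: Lemma \ref{lem:erg-dbar-closed} plus the ``furthermore'' part of Theorem \ref{thm:gen-vs-point-from-supp} (which is exactly Corollary \ref{cor:dbar-closeness-for-regular-points}\eqref{dbar_ii}, the form the paper cites) with intersection of the full-measure sets for \eqref{cond:char-rho-bar-conv-i}$\implies$\eqref{cond:char-rho-bar-conv-ii}, the trivial choice $n_k=k$ for \eqref{cond:char-rho-bar-conv-ii}$\implies$\eqref{cond:char-rho-bar-conv-iii}, and Theorem \ref{thm:Besicovitch-quasi-genericity-and-ergodicity} together with Corollary \ref{cor:dbar-closeness-for-regular-points}\eqref{dbar_i} for \eqref{cond:char-rho-bar-conv-iii}$\implies$\eqref{cond:char-rho-bar-conv-i}. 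Your explicit appeal to Theorem \ref{thm:Besicovitch-quasi-genericity-and-ergodicity} to produce $\mu$ in the last implication is in fact slightly more complete than the paper's terse wording.
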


\begin{proof} 
\eqref{cond:char-rho-bar-conv-i}$\implies$\eqref{cond:char-rho-bar-conv-ii} By Lemma \ref{lem:erg-dbar-closed}, $\mu$ is ergodic. For each $m\ge 1$ we use Corollary \ref{cor:dbar-closeness-for-regular-points}\eqref{dbar_ii} to get a Borel sets $X^{(m)}\subseteq\Gen(\mu)$ and $X_m\subseteq\Gen(\mu_m)$  with $\mu(X^{(m)})=\mu_m(X_m)=1$  
such that for every $x'\in X^{(m)}$ there is $x'_m\in X_m$ satisfying
\begin{equation}\label{db-eq}
D_B(x',x_m') = \lim_{n\rightarrow\infty}\dfrac{1}{n}\sum_{j=0}^{n-1}\rho\left(T^j(x'),T^j(x_m')\right)
= \bar\rho(\mu,\mu_m).
\end{equation}
Take $X'=\bigcap_{m=1}^\infty X^{(m)}$. 
Every $x\in X'$ is a $D_B$-limit of a sequence $(x_m)_{m=1}^\infty\subseteq X$ such that for every $m\ge 1$ we have $x_m\in\Gen(\mu_m)$. 

\eqref{cond:char-rho-bar-conv-ii}$\implies$\eqref{cond:char-rho-bar-conv-iii} This is obvious (we take $(n_k)$ such that $n_k=k$ for every $k\ge 1$).

\eqref{cond:char-rho-bar-conv-iii}$\implies$\eqref{cond:char-rho-bar-conv-i} 
Use Corollary \ref{cor:dbar-closeness-for-regular-points}\eqref{dbar_i} to get $\bar{\rho}(\mu_m,\mu)\to 0$ as $m\to\infty$.
\end{proof}

\begin{lem}\label{lem:equivalence-of-metrics-on-symbolic}
    If $A$ is finite and $\rho$ is any metric compatible with the topology on $A^\Z$, then the metrics $\dbarm$ and $\bar{\rho}$ are uniformly equivalent on $\Ms(A^\Z)$.
\end{lem}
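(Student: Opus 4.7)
The plan is to prove two pointwise comparison inequalities between $\rho$ and $d_0$ on $A^\Z \times A^\Z$ and then integrate against joinings in $J(\mu,\nu)$ to transfer these to the desired uniform equivalence between $\bar{\rho}$ and $\dbarm$ on $\Ms(A^\Z)$.

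First, I would exploit that $A$ is finite and discrete, so the cylinder set $\{(x,y) \in A^\Z \times A^\Z : x_0 \neq y_0\}$ is clopen; in particular $d_0$ is continuous, the set is compact, and it is disjoint from the diagonal. By continuity of $\rho$, the number $\delta := \inf\{\rho(x,y) : x_0 \neq y_0\}$ is strictly positive, giving the pointwise bound $\delta \cdot d_0(x,y) \le \rho(x,y)$ on $A^\Z \times A^\Z$. Integrating against any joining $\lambda \in J(\mu,\nu)$ and taking the infimum over such $\lambda$ yields $\delta \cdot \dbarm(\mu,\nu) \le \bar{\rho}(\mu,\nu)$, which is one half of the equivalence.

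For the opposite inequality, fix $\eps > 0$ and use continuity of $\rho$ and compactness to choose $N = N(\eps)$ so that whenever $x_i = y_i$ for all $|i| \le N$, one has $\rho(x,y) < \eps$. Writing $D$ for the $\rho$-diameter of $A^\Z$, this gives the pointwise estimate
\[
\rho(x,y) \le \eps + D \sum_{|i| \le N}\mathbbm{1}\{x_i \neq y_i\}.
\]
The crucial step is then to use the $\sigma \times \sigma$-invariance of any joining $\lambda \in J(\mu,\nu)$: since $\{x_i \neq y_i\}$ is the $(\sigma \times \sigma)^{-i}$-preimage of $\{x_0 \neq y_0\}$, we have $\lambda(\{x_i \neq y_i\}) = \int d_0\, d\lambda$ for every $i \in \Z$. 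Integrating the pointwise bound over an optimal joining for $\dbarm$ (which exists because $J(\mu,\nu)$ is weak$^*$-compact and $d_0$ is continuous), I obtain
\[
\bar{\rho}(\mu,\nu) \le \eps + D(2N+1)\,\dbarm(\mu,\nu).
\]
Since $\eps > 0$ is arbitrary, this yields a modulus-of-continuity bound of the form $\bar{\rho}(\mu,\nu) \le \psi(\dbarm(\mu,\nu))$ with $\psi$ continuous at $0$ and $\psi(0)=0$, completing the uniform equivalence.

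I do not expect a serious obstacle: the argument is essentially a compactness-plus-shift-invariance calculation. The only subtle point worth flagging is the role of $A$ being finite and discrete, which makes $d_0$ continuous on $A^\Z\times A^\Z$; without this, both the existence of an optimal joining for $\dbarm$ and the lower bound $\delta\cdot d_0\le\rho$ become problematic. An alternative, more indirect route would be to use Corollary~\ref{cor:char-rho-bar-conv} together with Lemma~\ref{lem:dbar-D_B-equiv} to transfer convergence between $\bar{\rho}$ and $\dbarm$ via generic points, but the direct integration approach above is shorter and gives explicit moduli.
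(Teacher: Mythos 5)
Your proposal is correct, and it takes a genuinely different (and more direct) route than the paper. The paper proves the two directions via its generic-point machinery: for one direction it simply notes $d_0\le\rho$ pointwise, and for the other it combines Lemma~\ref{lem:dbar-D_B-equiv} (uniform equivalence of $D_B$ and $\dbar$ on $A^\Z$) with Corollary~\ref{cor:dbar-closeness-for-regular-points}, first obtaining the estimate for ergodic $\mu,\nu$ by producing generic points $x'\in\Gen(\mu)$, $y'\in\Gen(\nu)$ with $\dbar(x',y')$ small, and then extending to arbitrary invariant measures by an ergodic decomposition of joinings, a step it leaves to the reader --- essentially the ``alternative, more indirect route'' you flag at the end. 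Your argument instead works entirely at the level of pointwise inequalities between $\rho$ and $d_0$ integrated against $\sigma\times\sigma$-invariant joinings: the clopen-ness of $\{x_0\neq y_0\}$ gives $\delta\, d_0\le\rho$, and uniform continuity of $\rho$ in finitely many coordinates plus shift-invariance of the joining (so that $\lambda(\{x_i\neq y_i\})=\int d_0\,d\lambda$ for all $i$) gives $\bar\rho(\mu,\nu)\le\eps+D(2N+1)\,\dbarm(\mu,\nu)$. What this buys is threefold: it applies to all of $\Ms(A^\Z)$ at once, so no ergodic-decomposition step is needed; it gives explicit moduli of uniform equivalence; and your lower bound with the constant $\delta$ is actually more careful than the paper's bare claim $d_0\le\rho$, which for an arbitrary compatible metric $\rho$ only holds up to such a constant (e.g.\ after rescaling $\rho$). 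The only points to state explicitly in a write-up are the compactness argument behind the choice of $N$ (sequences agreeing on ever larger central blocks converge to the same point) and the observation that you may work with a near-optimal joining for $\dbarm$, so attainment of the infimum, while available by weak$^*$ compactness and continuity of $d_0$, is not even needed.
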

\begin{proof} Since $d_0\le \rho$ on $A^\Z\times A^\Z$, we have that
$\dbarm\le \bar{\rho}$  on $\Ms(A^\Z)\times \Ms(A^\Z)$.

By 
Lemma~\ref{lem:dbar-D_B-equiv} the pseudometrics $D_B$ and $\dbar$ are uniformly equivalent on $A^\Z$. Therefore, for every $\eps>0$ there is $\delta>0$ such that for every $x=(x_i)_{i\in\Z}$ and $y=(y_i)_{i\in\Z}$ in $A^\Z$ with $\dbar(x,y)\le \delta$ we have $D_B(x,y)<\eps$.

Assume that $\mu,\nu\in\Mse(A^\Z)$ are such that $\dbarm(\mu,\nu)\le\delta$. Applying 
Corollary \ref{cor:dbar-closeness-for-regular-points}\eqref{dbar_ii}

we find $x'=(x'_i)_{i\in\Z}\in\Gen(\mu)$ and $y'=(y'_i)_{i\in\Z}\in\Gen(\nu)$ 
such that $\dbar(x',y')\le \delta$. Hence, $D_B(x',y')\le \eps$, so applying Corollary \ref{cor:dbar-closeness-for-regular-points}\eqref{dbar_i}, we see that $\bar{\rho}(\mu,\nu)\le\eps$. 
Hence, $\dbarm(\mu,\nu)\le\delta$ implies $\bar{\rho}(\mu,\nu)\le\eps$ on $\Mse(A^\Z)$. 
In order to get the same on $\Ms(A^\Z)$ one needs to consider ergodic decompositions of joinings between $\mu$ and $\nu$. As this is pretty standard, we leave it to the reader (see \cite[Lemma 13 \& Lemma 14]{KKK2}).
\end{proof}

As a direct consequence of Lemma~\ref{lem:dbar-D_B-equiv}, Corollary \ref{cor:char-rho-bar-conv} and Lemma \ref{lem:equivalence-of-metrics-on-symbolic} we obtain the following:

\begin{cor} \label{cor:char-dbarm-conv}   
Let $A$ be an alphabet and $(\mu_m)_{m=1}^\infty\subseteq\Mse(A^\Z)$. The following conditions are equivalent:
\begin{enumerate}
    \item There is $\mu\in\Ms(A^\Z)$ such that $\dbarm(\mu_m,\mu)\to 0$ as $m\to\infty$.
    \item There exist $(x_m)_{m=1}^\infty\subseteq A^\Z$ and $x\in A^\Z$ such that for every $m\ge 1$ we have $x_m\in\Gen(\mu_m)$ and 
    \[
    \lim_{m\to\infty}\dbar(x,x_m)=0.
    \]
    \item There exist a strictly increasing sequence $(n_k)\subseteq\N$, 
    a sequence $(y_m)_{m=1}^\infty\subseteq A^\Z$ such that for every $m\ge 1$ we have $y_m$  which is quasi-generic for $\mu_m$ along $(n_k)$, and $y\in A^\Z$ such that
    \[
    \lim_{m\to\infty}\dbar_{(n_k)}(y,y_m)=0.
    \]    
\end{enumerate}
If one, hence every, condition above holds, then $\mu$ is ergodic, $x\in\Gen(\mu)$, and $y$  is quasi-generic for $\mu$ along $(n_k)$
In addition, we can guarantee that for every $m\ge 1$ it holds
    \[
\dbar(x,x_m)=\lim_{n\to\infty}\frac1n\left|\{0\le j< n: \sigma^j(x)_0\neq\sigma^j(x_m)_0\}\right|.
    \]
\end{cor}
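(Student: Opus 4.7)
The plan is to deduce this corollary directly from the general statement in Corollary \ref{cor:char-rho-bar-conv} by transporting it to the shift space via the two uniform equivalences we already have: Lemma \ref{lem:dbar-D_B-equiv} on the level of points, which tells us that $D_B^{(n_k)}$ (computed with respect to any compatible metric on $A^\Z$) and $\dbar_{(n_k)}$ are uniformly equivalent; and Lemma \ref{lem:equivalence-of-metrics-on-symbolic} on the level of measures, which tells us that $\bar{\rho}$ and $\dbarm$ are uniformly equivalent on $\Ms(A^\Z)$. Fix once and for all a compatible metric $\rho$ on $A^\Z$.

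First I would rewrite each of the three conditions of the present statement as its $\bar{\rho}$/$D_B$ counterpart: $\dbarm(\mu_m,\mu)\to 0$ is equivalent to $\bar{\rho}(\mu_m,\mu)\to 0$ by Lemma \ref{lem:equivalence-of-metrics-on-symbolic}, while $\dbar(x,x_m)\to 0$ and $\dbar_{(n_k)}(y,y_m)\to 0$ are equivalent to $D_B(x,x_m)\to 0$ and $D_B^{(n_k)}(y,y_m)\to 0$ respectively by Lemma \ref{lem:dbar-D_B-equiv} applied with $n_k=k$ and for the general $(n_k)$. The equivalence of the three reformulated conditions is then exactly Corollary \ref{cor:char-rho-bar-conv}, and the supplementary conclusions (ergodicity of $\mu$, genericity of $x$ for $\mu$, quasi-genericity of $y$ for $\mu$ along $(n_k)$) transfer unchanged.

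It remains to produce the final limit formula
\[
\dbar(x,x_m)=\lim_{n\to\infty}\frac1n\lvert\{0\le j<n:\sigma^j(x)_0\neq \sigma^j(x_m)_0\}\rvert=\dbarm(\mu_m,\mu).
\]
The cleanest way I see is to rerun the proof of Theorem \ref{thm:gen-vs-point-from-supp} and Corollary \ref{cor:dbar-closeness-for-regular-points}\eqref{dbar_ii} with the continuous function $F=d_0$ on $A^\Z\times A^\Z$ in place of the metric $\rho$: the function $d_0$ is continuous even though it is only a pseudometric, and its integral against any joining $\lambda\in J(\mu,\mu_m)$ is exactly $\lambda(\{(u,v):u_0\neq v_0\})$, whose infimum over $J(\mu,\mu_m)$ equals $\dbarm(\mu,\mu_m)$ by definition. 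Choosing an ergodic optimal joining $\lambda\in J^e(\mu,\mu_m)$ that achieves $\dbarm(\mu,\mu_m)=\int d_0\,d\lambda$ and picking $(x,x_m)$ in a full-$\lambda$-measure set of $\lambda$-generic pairs whose coordinate projections land in $\Gen(\mu)$ and $\Gen(\mu_m)$, the pointwise ergodic theorem applied to the continuous function $d_0$ yields the desired genuine limit, equal to $\dbarm(\mu,\mu_m)$.

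The only obstacle worth flagging is the last step: because $d_0$ is not a metric, one cannot invoke Corollary \ref{cor:dbar-closeness-for-regular-points} as a black box, but the proof carries over verbatim since Theorem \ref{thm:gen-vs-point-from-supp} was stated for arbitrary continuous $F$. Everything else is a mechanical translation through the two uniform-equivalence lemmas.
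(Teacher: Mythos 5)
Your proposal matches the paper's own route exactly: the paper states this corollary as a direct consequence of Lemma \ref{lem:dbar-D_B-equiv}, Corollary \ref{cor:char-rho-bar-conv}, and Lemma \ref{lem:equivalence-of-metrics-on-symbolic}, which is precisely the translation you carry out. Your extra care in handling the final limit formula by applying Theorem \ref{thm:gen-vs-point-from-supp} with $F=d_0$ (rather than quoting the $\rho$-based Corollary \ref{cor:dbar-closeness-for-regular-points} as a black box) is exactly the detail the paper leaves implicit, and it is correct since that theorem is stated for an arbitrary continuous $F$.
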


\section{On sequences of generic points converging with respect to the Besicovitch pseudometric}

Theorem \ref{thm:Besicovitch-quasi-genericity-and-ergodicity} says that for every strictly increasing $(n_k)\subseteq\N$ and every $D_B$-limit of a sequence of generic (along $(n_k)$) points for ergodic measures must be a generic (along $(n_k)$) point for an ergodic measure. By Corollary \ref{cor:char-rho-bar-conv} we can rephrase Theorem \ref{thm:Besicovitch-quasi-genericity-and-ergodicity} as the closedness of the set of ergodic measures in the $\bar{\rho}$ metric. Here, we gather some similar results that allow us to describe spectrum of $\bar{\rho}$-limit measures. 

\begin{lem}\label{lem:Freq_WWgen} Let $(\mu_m)_{m=1}^\infty\subseteq\MT(X)$. 
Assume that for each $m\ge 1$ we have a Fourier--Bohr $\mu_m$-generic point $x_m\in X$. If $\mu\in\MT(X)$ and $x\in X$ is a Fourier--Bohr $\mu$-generic point such that $D_B(x,x_m) \to 0$ as $m\to\infty$, 

then 
\begin{equation}\label{eq:liminf_freq}
\Freq(x)\subseteq \liminf_{m\to\infty}\Freq(x_m):=\bigcup_{M=1}^\infty\bigcap_{m=M}^\infty \Freq(x_m).    
\end{equation}
\end{lem}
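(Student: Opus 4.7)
The plan is to show that any frequency $\xi\in\Freq(x)$ is eventually a frequency of every $x_m$, by exploiting the fact that Lemma~\ref{lem:prox_Av_n} gives a bound on $|\Av_n[f,\bar\xi](x)-\Av_n[f,\bar\xi](x')|$ which is \emph{uniform} in $\xi\in\bS^1$.

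First, fix $\xi\in\Freq(x)$. Since $x$ is Wiener--Wintner $\mu$-generic, the limit $\Av[f,\bar\xi](x)$ exists for every $f\in C(X)$, so $\xi\in\Freq(x)$ means there is $f\in C(X)$ such that
\[
\gamma:=|\Av[f,\bar\xi](x)|=\lim_{n\to\infty}\bigl|\Av_n[f,\bar\xi](x)\bigr|>0.
\]
Apply Lemma~\ref{lem:prox_Av_n} to this $f$ with $\eps=\gamma/2$ to obtain $\delta>0$ such that whenever $D_B(x,x')<\delta$, there exists $N=N(x,x')$ with
\[
\bigl|\Av_n[f,\bar\xi](x)-\Av_n[f,\bar\xi](x')\bigr|\le \gamma/2
\qquad\text{for all }n\ge N\text{ and all }\xi\in\bS^1.
\]

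Next, using the hypothesis $D_B(x,x_m)\to 0$, choose $M\ge 1$ so large that $D_B(x,x_m)<\delta$ for every $m\ge M$. Fix such an $m$. Since $x_m$ is a Wiener--Wintner $\mu_m$-generic point, the sequence $(\Av_n[f,\bar\xi](x_m))_{n\ge 1}$ converges; let us denote its limit by $\Av[f,\bar\xi](x_m)$. Passing to the limit $n\to\infty$ in the uniform estimate above gives
\[
\bigl|\Av[f,\bar\xi](x)-\Av[f,\bar\xi](x_m)\bigr|\le \gamma/2,
\]
and therefore
\[
\bigl|\Av[f,\bar\xi](x_m)\bigr|\ge \bigl|\Av[f,\bar\xi](x)\bigr|-\gamma/2=\gamma/2>0.
\]
Consequently $\xi\in\Freq(x_m)$ for every $m\ge M$, which proves $\xi\in\bigcap_{m=M}^\infty\Freq(x_m)\subseteq \liminf_{m\to\infty}\Freq(x_m)$, and \eqref{eq:liminf_freq} follows.

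The only real content of the argument is the uniformity over $\xi\in\bS^1$ of the estimate provided by Lemma~\ref{lem:prox_Av_n}; once that is in hand, the proof is essentially a triangle-inequality chase. The technical point worth highlighting is that one never needs any continuity in $\xi$ of the Wiener--Wintner averages, only that both $x$ and $x_m$ are Wiener--Wintner generic so that the corresponding limits in $n$ exist and one may pass from the finite-$n$ bound to a limiting bound on $|\Av[f,\bar\xi](x)-\Av[f,\bar\xi](x_m)|$.
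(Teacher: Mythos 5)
Your proof is correct and follows essentially the same route as the paper: both arguments hinge on Lemma~\ref{lem:prox_Av_n} together with the existence of the Wiener--Wintner limits at $x$ and $x_m$, passed through a triangle inequality; the paper phrases this as convergence $\Av[f,\bar\xi](x_m)\to\Av[f,\bar\xi](x)$ and argues by contraposition, while you argue directly with a witness $f$ for $\xi\in\Freq(x)$. (The uniformity in $\xi$ you highlight is not actually needed, since $\xi$ is fixed throughout, but this does not affect correctness.)
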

\begin{proof} 
Let $f\in C(X)$. Fix $\xi\in\bS^1$. Our assumption means 
that 
\begin{align}\label{eq:av_x_m_lim}
    \lim_{n\to\infty}\Av_n[f,\bar{\xi}](x_m)&=\Av[f,\bar{\xi}](x_m)\text{ for every }m\ge 1,\\
    \lim_{n\to\infty}\Av_n[f,\bar{\xi}](x)&=\Av[f,\bar{\xi}](x) \label{eq:av_x_lim}.
\end{align}
First, we prove that 
\begin{equation}\label{eq:goal}
    \lim_{m\to\infty}\Av[f,\bar{\xi}](x_m)=\Av[f,\bar{\xi}](x).
\end{equation}
Fix $\eps>0$. Using Lemma \ref{lem:prox_Av_n}, choose $\delta>0$ for $\eps/3$ and $f\in C(X)$. 
Take $m$ sufficiently large to guarantee $D_B(x,x_m)<\delta$. 
Now, Lemma \ref{lem:prox_Av_n}  implies that
for all but finitely many $n$'s we have
\begin{equation}\label{ineq:Av_n_1}
\left|\Av_n[f,\bar{\xi}](x)-\Av_n[f,\bar{\xi}](x_m)\right|<\eps/3.
\end{equation}
On the other hand, \eqref{eq:av_x_m_lim} and \eqref{eq:av_x_lim} imply  that for all $n$'s large enough it also holds
\begin{align}
\label{ineq:Av_x_m_2}
\left|\Av_n[f,\bar{\xi}](x_m)-\Av[f,\bar{\xi}](x_m)\right|<\eps/3, \\   
\label{ineq:Av_x_2}
\left|\Av_n[f,\bar{\xi}](x)-\Av[f,\bar{\xi}](x)\right|<\eps/3.
\end{align}
Combining \eqref{ineq:Av_n_1}, \eqref{ineq:Av_x_m_2} and \eqref{ineq:Av_x_2} 
we get
\begin{equation}\label{ineq:Av_n_3}
\left|\Av[f,\bar{\xi}](x)-\Av[f,\bar{\xi}](x_m)\right|<\eps.
\end{equation}
for all sufficiently large $m$'s, which proves \eqref{eq:goal}. 

Now, if $\xi\notin \Freq(x_m)$ for infinitely many $m$'s, then using \eqref{eq:goal} we see that for every $f\in\ C(X)$ we have $\Av[f,\bar{\xi}](x)=0$. 
This means that $\xi\notin\Freq(x)$ and \eqref{eq:liminf_freq} holds. 
\end{proof}

\begin{lem}\label{lem:mixing}
Suppose that $(x_m)_{m=1}^\infty\subseteq X$ and for each $m\ge 1$ the point $x_m$ is $\mu_m$-generic 
for a mixing $\mu_m\in\MTe(X)$. If $x\in X$ is such that
\[
\lim_{m\to\infty} D_B(x,x_m) =0,
\]
then $x$ is a generic point for a mixing $\mu\in\MT(X)$.
\end{lem}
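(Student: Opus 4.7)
My plan is to first invoke Corollary \ref{cor:char-rho-bar-conv} to reduce the problem to a statement about $\bar{\rho}$-limits of measures, and then show directly that mixing is preserved under $\bar{\rho}$-convergence. Since each $\mu_m$ is in particular ergodic and $D_B(x, x_m) \to 0$ with $x_m \in \Gen(\mu_m)$, Corollary \ref{cor:char-rho-bar-conv} yields a measure $\mu \in \MTe(X)$ with $\bar{\rho}(\mu_m, \mu) \to 0$ and $x \in \Gen(\mu)$. The remaining task is to verify that $\mu$ is mixing.

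To prove mixing, fix $f, g \in C(X)$ and $\eps > 0$, and consider the standard decomposition
\begin{multline*}
\left| \int f \cdot (g \circ T^n) \, d\mu - \int f \, d\mu \int g \, d\mu \right| \le
\left| \int f \cdot (g \circ T^n) \, d\mu - \int f \cdot (g \circ T^n) \, d\mu_m \right| \\
+ \left| \int f \cdot (g \circ T^n) \, d\mu_m - \int f \, d\mu_m \int g \, d\mu_m \right| + \left| \int f \, d\mu_m \int g \, d\mu_m - \int f \, d\mu \int g \, d\mu \right|.
\end{multline*}
The third term tends to $0$ as $m \to \infty$ by Remark \ref{rem:weakstar}. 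The second term tends to $0$ as $n \to \infty$ for each fixed $m$ by mixing of $\mu_m$. The first term is the main issue, and I control it uniformly in $n$ using joinings.

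For each $m$, pick $\lambda_m \in J(\mu, \mu_m)$ with $\int \rho(x,y) \, d\lambda_m(x,y) < \bar{\rho}(\mu, \mu_m) + 1/m$. Because $\lambda_m$ is $T \times T$-invariant, for any continuous $h\colon X\times X\to\R$ we have $\int h \circ (T\times T)^n \, d\lambda_m = \int h \, d\lambda_m$. Applying this with $h(x,y) = |g(x) - g(y)|$ and using $|f(x)g(T^n x) - f(y)g(T^n y)| \le \|g\|_\infty |f(x)-f(y)| + \|f\|_\infty |g(T^n x) - g(T^n y)|$, we obtain
\begin{equation*}
\left| \int f \cdot (g \circ T^n)\, d\mu - \int f \cdot (g \circ T^n)\, d\mu_m \right| \le \|g\|_\infty \!\int |f(x)-f(y)|\, d\lambda_m + \|f\|_\infty \!\int |g(x)-g(y)|\, d\lambda_m,
\end{equation*}
and this upper bound is independent of $n$. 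Given $\eta > 0$, choose $\delta > 0$ so small that $\rho(x,y) < \delta$ implies $|f(x)-f(y)|, |g(x)-g(y)| < \eta$; then by Markov's inequality each of the two integrals above is at most $\eta + 2\|\cdot\|_\infty \delta^{-1}(\bar{\rho}(\mu,\mu_m) + 1/m)$, which is smaller than any prescribed number for all sufficiently large $m$.

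The main obstacle is precisely the uniform-in-$n$ estimate of the first term; the $T\times T$-invariance of joinings is what lets the shifted functions $g \circ T^n$ not spoil the argument. Once this bound is established, choosing $m$ large enough (so that the first and third terms are $< \eps/3$) and then $n$ large enough (using mixing of the chosen $\mu_m$ for the second term $< \eps/3$) completes the proof that $\mu$ is mixing.
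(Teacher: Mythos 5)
Your proof is correct, but it takes a genuinely different route from the paper. The paper never passes to joinings: it works directly with the points $x$ and $x_m$, writes the correlations $\langle U_T^n(f),f\rangle$ and $\langle U_T^n(f),f\rangle_m$ as limits of Birkhoff averages along the orbits of $x$, $T^n(x)$ and $x_m$, $T^n(x_m)$, and controls their difference by splitting $\{0,\dots,k-1\}$ into a set of ``good'' indices (where both $\rho(T^j(x),T^j(x_m))$ and its shift by $n$ are small, with density close to $1$ by $D_B(x,x_m)\to 0$ and Lemma \ref{lem:equiv-D_B-along}) and a small remainder. You instead first invoke Corollary \ref{cor:char-rho-bar-conv} (legitimate: the $\mu_m$ are ergodic, the corollary precedes this lemma, and it yields $\mu$ ergodic, $x\in\Gen(\mu)$, $\bar\rho(\mu_m,\mu)\to0$), and then prove the measure-level statement that mixing is $\bar\rho$-closed: the near-optimal joining $\lambda_m\in J(\mu,\mu_m)$, its $T\times T$-invariance (which makes the bound on $\bigl|\int f\cdot(g\circ T^n)\,\mathrm{d}\mu-\int f\cdot(g\circ T^n)\,\mathrm{d}\mu_m\bigr|$ independent of $n$), and Markov's inequality replace the paper's orbit-counting argument. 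What your approach buys is a cleaner uniform-in-$n$ estimate and a statement formulated directly for $\bar\rho$-limits of measures, which is exactly the form used later in Theorem \ref{thm:Besicovitch-summary}\eqref{thm:conclusion-5}; what the paper's approach buys is independence from the joining machinery of Section \ref{sec:rho-bar}, staying entirely at the level of generic points and the Besicovitch pseudometric (in the spirit of the surrounding section). One small point you leave implicit: having the convergence $\int f\cdot(g\circ T^n)\,\mathrm{d}\mu\to\int f\,\mathrm{d}\mu\int g\,\mathrm{d}\mu$ only for $f,g\in C(X)$, you still need the standard density argument in $L^2(\mu)$ to conclude mixing --- this is the same reduction the paper handles by citing \cite[Exercise 2.7.5]{EW}, so it is a cosmetic omission rather than a gap.
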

\begin{proof} We need to consider various $L^2$-spaces: $L^2(\mu_m)$ for $m\ge 1$ and $L^2(\mu)$. To simplify the notation we denote every Koopman operator  acting on these spaces as $U_T$. We denote the scalar product on $L^2(\mu_m)$ by $\langle\cdot,\cdot\rangle_m$ and write $\langle\cdot,\cdot\rangle$ for the scalar product on $L^2(\mu)$.  Since continuous functions are dense in $L^2(\mu)$, 

it follows from \cite[Exercise 2.7.5]{EW} that it is enough to check that for every $f\in C(X)$ we have
\[\lim_{n\to\infty}\langle U_T^n(f),f\rangle=\langle f,1\rangle\cdot \langle 1,f\rangle,
\]
knowing that for every $m\ge 1$ it holds
\[\lim_{n\to\infty}\langle U_T^n(f),f\rangle_m=\langle f,1\rangle_m\cdot \langle 1,f\rangle_m.
\]
Fix $f\in C(X)$ and $\eps>0$.  Let $\Psi_m(f)=\langle f,1\rangle_m\cdot \langle 1,f\rangle_m$ and $\Psi(f)=\langle f,1\rangle\cdot \langle 1,f\rangle$. Given $n$ and $m\ge 1$ we clearly have
\begin{multline*}
\left|\langle U_T^n(f),f\rangle-\Psi(f)\right|\\ \le \left|\langle U_T^n(f),f\rangle-\langle U_T^n(f),f\rangle_m\right|+\left|\langle U_T^n(f),f\rangle_m-\Psi_m(f)\right|+ 
\left|\Psi_m(f)-\Psi(f)\right|.
\end{multline*}

Our assumption reads that for every $m\ge 1$ and $\eps>0$ there is $N=N(\eps/3,m,f)$ such that for every $n\ge N$ we have $\left|\langle U_T^n(f),f\rangle_m-\Psi_m(f)\right|<\eps/3$. Also, there is $M=M(\eps/3,f)$ such that for every $m\ge M$ we have $\left|\Psi_m(f)-\Psi(f)\right|<\eps/3$.

For every $n\in\N$ and $m\ge 1$ we write $y_n=T^n(x)$ and $y^{(m)}_n=T^n(x_{m})$. With this notation, using that $x$ and $x_m$ are generic points, we have
\begin{gather*}
    \langle U_T^n(f),f\rangle =\int f\circ T^n \cdot \bar f \text{ d}\mu=\lim_{k\to\infty}\frac1k\sum_{j=0}^{k-1}f(T^j(y_n))\overline{f(T^j(x))},\\
    \langle U_T^n(f),f\rangle_m=\int f\circ T^n \cdot \bar f \text{ d}\mu_m=\lim_{k\to\infty}\frac1k\sum_{j=0}^{k-1}f(T^j(y^{(m)}_n))\overline{f(T^j(x_m))}.
\end{gather*}

Therefore, it remains to show that for every $n\ge 1$ and for every sufficiently large $m\ge 1$ there is $K\ge 1$ such that for every $k\ge K$ it holds
\begin{equation}\label{ineq:mix-goal}
\frac1k\sum_{j=0}^{k-1} \left|f(T^j(y_n))\overline{f(T^j(x))}-f(T^j(y^{(m)}_n))\overline{f(T^j(x_m))}\right|\le \eps/3.
    \end{equation}
Write $A_j$ for the $j$th summand in \eqref{ineq:mix-goal} and note that
\begin{multline}\label{Aj-bound}
A_j:=\left|f(T^j(y_n))\overline{f(T^j(x))}-f(T^j(y^{(m)}_n))\overline{f(T^j(x_m))}\right|\le\\ 
 \left(\left|f(T^j(x))-f(T^j(x_m))\right|
 + \left|f(T^j(y_n))-f(T^j(y^{(m)}_n))\right|\right)\norm{f}_\infty\\\le 4\norm{f}_\infty^2.    
\end{multline}

Let $\beta=\eps/(6\max\{1,\norm{f}_\infty\})$. Since $f$ 
is continuous, there is $\alpha>0$ such that if $x,x'\in X$ satisfy 
$\rho(x,x')\le \alpha$, then $\left\lvert f(x)-f(x')\right\rvert\le \beta$.

Now, for every $k\ge 1$ we divide the set  $\{0,1,\ldots,k-1\}$ into three pairwise disjoint subsets: 
\begin{enumerate}
    \item \label{Set_1-mix} The set of indices $j$ such that $k-n\le j < k$. The upper bound for $A_j$ for these $j$'s is $4\norm{f}^2_\infty$.
    \item \label{Set_2-mix} Those indices $j$ in $[0,k-n)$ such that
    \[\rho(T^j(x),T^j(x_m))\le \alpha\quad\text{and}\quad  
    \rho(T^j(y_n),T^j(y_n^{(m)}))\le \alpha.\] For these $j$'s we have  $A_j\le 2\beta \norm{f}_\infty$. Denote the set of these indices by $G^{(m)}(k)$.
    \item \label{Set_3-mix} The indices $j$ in $B^{(m)}(k)=[0,k)\setminus G^{(m)}(k)$. For these $j$'s we have $A_j\le 4\norm{f}^2_\infty$. 
\end{enumerate}
It can be easily observed that
\begin{equation*}
\lim_{m\to\infty}\lim_{k\to\infty}\frac{1}{k}\left| 
G^{(m)}(k)\right|=1 \quad\text{and}\quad
\lim_{m\to\infty}\lim_{k\to\infty}\frac{1}{k}\left|B^{(m)}(k)\right|=0.
\end{equation*} 
Therefore, for all sufficiently large $m$, we can bound the left hand side of \eqref{ineq:mix-goal} by 
\[
\frac{1}{k} \bigg( n\cdot 4\norm{f}^2_\infty + |G^{(m)}(k)|\cdot 2 \beta \norm{f}_\infty + |B^{(m)}(k)|\cdot 4\norm{f}^{2}_\infty\bigg)  < \frac{\eps}{3}. 
\]
Hence, \eqref{ineq:mix-goal} holds.
\end{proof}

Characteristic classes are families of measure-preserving systems introduced in \cite{BKPLR} and are related to an extension of Sarnak's conjecture beyond zero-entropy systems studied in \cite{BKPLR}. A \emph{characteristic class} is the collection $\mathcal{C}$  of all measure-preserving systems that is closed under taking factors and (countable) joinings. Recently, the authors of \cite{BL} proved that for every characteristic class $\mathcal{C}$ the set $\MTe(X)\cap \mathcal{C}$ is closed with respect to $\bar{\rho}$. In particular,  they established in \cite{BL} that if $(x_m)_{m=1}^{\infty}$ is a sequence such that for each $m\ge 1$ the point $x_m$ is generic for some $\mu_m\in \MTe(X)\cap \mathcal{C}$ and $x_m$ converges to $x$ in the Besicovitch pseudometric, then $x$ is generic for $\mu\in \MTe(X)\cap \mathcal{C}$. Since the family of all measure-preserving systems with discrete spectrum forms a characteristic class, the following result is an immediate consequence of \cite{BL}, where $\mathcal{C}$ is the collection of measures with discrete spectrum. It can also be proved directly using techniques similar to those used in the proof of Lemma~\ref{lem:mixing}. We present this proof for completeness. 

\begin{lem}\label{lem:discrete_Besicovitch}
Suppose that $(x_m)_{m=1}^\infty\subseteq X$ and for each $m\ge 1$ the point $x_m$ is $\mu_m$-generic 
where $\mu_m\in\MTe(X)$. Let $\mu\in \MT(X)$. If $x\in X$ is a 
$\mu$-generic point such that
\[
\lim_{m\to\infty} D_B(x,x_m) =0,
\]
and each $\mu_m$ has discrete spectrum, 
then so does $\mu$.
\end{lem}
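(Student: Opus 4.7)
The plan is to mirror the strategy of Lemma \ref{lem:mixing}, replacing the mixing condition with a standard almost-periodicity characterisation of discrete spectrum. First, by Theorem \ref{thm:Besicovitch-quasi-genericity-and-ergodicity} (applied with $n_k=k$) the measure $\mu$ is automatically ergodic. The remaining task is to show that every $f\in C(X)$ lies in $\Kron(\mu)$; combined with the density of $C(X)$ in $L^2(X,\mu)$ and closedness of $\Kron(\mu)$, this will give $\Kron(\mu)=L^2(X,\mu)$, i.e., discrete spectrum. We invoke the classical criterion (a consequence of the spectral theorem for the unitary operator $U_T$): $f\in\Kron(\mu)$ if and only if, for every $\eps>0$, the return-time set
\[
R_\mu(f,\eps)=\{j\in\Z:\|U_T^j f-f\|_{L^2(\mu)}<\eps\}
\]
is syndetic (relatively dense) in $\Z$.

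The key technical step is to control these return-time sets uniformly in $j\in\Z$. For a $T$-invariant Borel probability measure $\nu$ on $X$ and $f\in C(X)$, put $h_\nu(j)=\|U_T^jf-f\|_{L^2(\nu)}$. When $z$ is $\nu$-generic, continuity of $|f\circ T^j-f|^2$ yields $h_\nu(j)^2=\lim_k k^{-1}\sum_{i=0}^{k-1}|f(T^{i+j}z)-f(T^i z)|^2$. Applying this to $(\mu,x)$ and to $(\mu_m,x_m)$, and bounding term by term via $\bigl||a-b|^2-|c-d|^2\bigr|\le 4\|f\|_\infty(|a-c|+|b-d|)$, a direct calculation in the spirit of the proof of Lemma \ref{lem:mixing} gives
\[
\bigl|h_\mu(j)^2-h_{\mu_m}(j)^2\bigr|\le 4\|f\|_\infty\limsup_{k\to\infty}\frac1k\sum_{i=0}^{k-1}\bigl(|f(T^{i+j}x)-f(T^{i+j}x_m)|+|f(T^ix)-f(T^ix_m)|\bigr).
\]
By Remark \ref{rem:transl_inv}, the two Cesàro averages on the right share the same $\limsup_k$, so that the whole bound is actually independent of $j$. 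Uniform continuity of $f$ together with $D_B(x,x_m)\to 0$, handled exactly as in the proof of Lemma \ref{lem:prox_Av_n}, then forces this quantity to $0$ as $m\to\infty$. Hence $h_{\mu_m}\to h_\mu$ uniformly on $\Z$.

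Given $f\in C(X)$ and $\eps>0$, since $\mu_m$ has discrete spectrum, $f\in\Kron(\mu_m)$, so $R_{\mu_m}(f,\eps/2)$ is syndetic in $\Z$. The uniform convergence $h_{\mu_m}\to h_\mu$ established above forces $R_{\mu_m}(f,\eps/2)\subseteq R_\mu(f,\eps)$ for all sufficiently large $m$; any superset of a syndetic set is syndetic, so $R_\mu(f,\eps)$ is syndetic. Thus $f\in\Kron(\mu)$, and the conclusion follows by density.

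The main obstacle is precisely the uniformity in $j$ of the bound on $|h_\mu(j)-h_{\mu_m}(j)|$: here the translation-invariance of the accumulation points of Cesàro averages (Remark \ref{rem:transl_inv}) plays the same decisive role that $T\times T$-invariance of a joining would play in the parallel joining-based argument. A slightly delicate point is the invocation of the ``Bohr almost periodic vector equals element of $\Kron$'' characterisation, but this is classical. An alternative route would be to verify condition \eqref{thm:DiscEquiv:3+} of Theorem \ref{thm:Discrete_spectrum} by passing the identity $\int|f|^2\,d\mu_m=\sum_{\xi}|\Av[f,\bar\xi](x_m)|^2$ to the limit via Lemma \ref{lem:Freq_WWgen}; however, while pointwise convergence of the individual summands follows from that lemma, ruling out mass escaping to frequencies outside $\Freq(x)$ requires extra work, and the $L^2$-almost-periodicity route described above seems cleaner.
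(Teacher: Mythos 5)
Your proof is correct, and its quantitative core is the same as the paper's: you express $\norm{f\circ T^{j}-f}_{L^2(\nu)}^2$ as a Ces\`aro average along a $\nu$-generic point, transfer it from $\mu_m$ to $\mu$ using uniform continuity of $f$, $D_B(x,x_m)\to 0$ (via Lemma \ref{lem:equiv-D_B-along}/Lemma \ref{lem:prox_Av_n}), and the shift-invariance of Ces\`aro limsups, exactly in the spirit of Lemma \ref{lem:mixing}. Where you genuinely diverge is the functional-analytic criterion: the paper invokes total boundedness of $\{f\circ T^{n}:n\in\Z\}$ in $L^2(\mu)$ (via \cite[Proposition 2.11.14]{Tao}) and therefore needs the uniform estimate over all pairs $(p,q)$ so as to transfer an $\eps/2$-net $\{f\circ T^{r}:|r|\le R\}$ from $L^2(\mu_m)$ to $L^2(\mu)$, whereas you only control the one-parameter quantities $h_\nu(j)=\norm{f\circ T^{j}-f}_{L^2(\nu)}$ and compensate with the classical characterisation of $\Kron$ through syndeticity of the almost-period sets $R_\mu(f,\eps)$. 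That characterisation is indeed classical (Bochner's criterion for Banach-space-valued almost periodic sequences plus the splitting of $L^2$ into compact and weakly mixing vectors), but it is worth noting that by unitarity the direction you need is essentially immediate and collapses your route back into the paper's: if every interval of length $L$ meets $R_\mu(f,\eps)$, then writing $n=\tau+r$ with $\tau\in R_\mu(f,\eps)$ and $0\le r<L$ gives $\norm{f\circ T^{n}-f\circ T^{r}}_{L^2(\mu)}=\norm{f\circ T^{\tau}-f}_{L^2(\mu)}<\eps$, so $\{f,f\circ T,\dots,f\circ T^{L-1}\}$ is already an $\eps$-net and total boundedness follows without citing Bochner or the Jacobs--de Leeuw--Glicksberg machinery. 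Two small points: your ``the two Ces\`aro averages share the same $\limsup$'' should be read as subadditivity of $\limsup$ plus shift-invariance of each term (which is all you need for a $j$-independent bound), and your closing caution about the alternative route via Theorem \ref{thm:Discrete_spectrum}\eqref{thm:DiscEquiv:3+} --- that mass could escape to frequencies outside $\Freq(x)$ --- is well placed; that is precisely why the paper does not argue that way either.
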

\begin{proof} 
By Theorem \ref{thm:Besicovitch-quasi-genericity-and-ergodicity} $\mu$ is ergodic. 
By \cite[Proposition 2.11.14]{Tao}, to prove $\mu\in\MTe(X)$ has discrete spectrum, it is enough to prove that  for every $ f\in C(X)$ the set $\{f\circ T^n: n\in\mathbb{Z}\}$ is totally bounded in $L^2(\mu)$. 

Fix $f\in C(X)$. Our assumption reads that for every $m\ge 1$ and $\eps>0$ there is $R=R(\eps/2,m)$ such that the set $\{f\circ T^r: |r|\le R\}$ is $\eps/2$-dense in $\{f\circ T^n: n\in\mathbb{Z}\}$ with respect to the  $L^2(\mu_m)$-norm. Therefore, for every $n\in\mathbb{Z}$ there is  $r$ such that $|r|\le R$ and $\norm{f\circ T^n-f\circ T^r}_{L^2(\mu_m)}\le \eps/2$. We claim that for all sufficiently large $m$'s, 
\begin{equation}\label{ineq:for-large-m}
     \left\lvert
     \norm{f\circ T^p-f\circ T^q}_{L^2(\mu_m)}-\norm{f\circ T^p-f\circ T^q}_{L^2(\mu)}
     \right\rvert \le \eps/2,\quad \forall p,q\in\Z. 
\end{equation} 
This ensures that for sufficiently large $m$ and $R=R(\eps/2,m)$ the set $\{f\circ T^r: |r|\le R\}$ is $\eps$-dense in $\{f\circ T^n: n\in\mathbb{Z}\}$ with respect to the $L^2(\mu)$-norm.   

Fix $p,q\in\Z$. For every $n\in\Z$ and $m\ge 1$, denote by $\bar{x}_n=T^n(x)$ and $x^{(m)}_n=T^n(x_{m})$. With this notation, using that $x$ and $x_m$ are generic points, we have  
 \begin{multline}\label{ineq:generic_discrete_characterisation}
 \left\lvert
     \norm{f\circ T^p-f\circ T^q}^2_{L^2(\mu_m)}-\norm{f\circ T^p-f\circ T^q}^2_{L^2(\mu)}\right\rvert \\
 \le 
 \lim_{k\to\infty}\frac{1}{k}\sum_{j=0}^{k-1}\left\lvert\lvert f(x_{p+j}^{(m)})-f(x_{q+j}^{(m)})\rvert^2 -\lvert f(\bar{x}_{p+j})-f(\bar{x}_{q+j})\rvert^2
     \right\rvert.
     \end{multline}
 For every $k\ge 1$, by partitioning the sets $P(k)=\{p,p+1,\ldots,p+k-1\}$ and $Q(k)=\{q,q+1,\ldots,q+k-1\}$ into three pairwise disjoint subsets, in a similar way as in the proof of Lemma~\ref{lem:mixing}, we can bound the right hand side of the inequality \eqref{ineq:generic_discrete_characterisation} with $\eps/2$ for sufficiently large $m$.
 \end{proof}

\begin{thm}\label{thm:Besicovitch-spectrum}
If $(\mu_k)_{k=1}^\infty\subseteq\MTe(X)$ converges in $\bar\rho$ to 
$\mu\in\MTe(X)$, then
\[
\Spec(X,T,\mu)\subseteq \liminf_{k\to\infty}\Spec(X,T,\mu_k):=\bigcup_{K=1}^\infty\bigcap_{k=K}^\infty \Spec(X,T,\mu_k).
\]

Furthermore, if for each $k\ge 1$ the measure $\mu_k$ has discrete spectrum (is weakly mixing), then so is $\mu$.
\end{thm}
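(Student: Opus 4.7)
The plan is to translate $\bar\rho$-convergence of the measures into $D_B$-convergence of suitably chosen generic points and then invoke Lemma~\ref{lem:Freq_WWgen}. First, using Theorem~\ref{thm:WW-reg-erg}, since $\mu$ and each $\mu_k$ are ergodic, I obtain full measure Borel sets $\hat X_\mu\subseteq\Gen(\mu)$ and $\hat X_{\mu_k}\subseteq\Gen(\mu_k)$ consisting entirely of regular Wiener--Wintner generic points. I then apply Corollary~\ref{cor:dbar-closeness-for-regular-points}\eqref{dbar_ii} to each pair $(\mu,\mu_k)$ with these pre-specified full measure sets, which is exactly the reason to have that flexibility built into the corollary. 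A diagonal selection produces a regular Wiener--Wintner $\mu$-generic point $x$ together with, for every $k$, a regular Wiener--Wintner $\mu_k$-generic point $x_k$ satisfying
\[
D_B(x,x_k)\le \bar\rho(\mu,\mu_k)+\tfrac{1}{k}\longrightarrow 0.
\]

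Next, by Lemma~\ref{lem:freq=spec} applied to each regular Wiener--Wintner generic point, I have $\Freq(x_k)=\Spec(X,T,\mu_k)$ for every $k$ and $\Freq(x)=\Spec(X,T,\mu)$. Lemma~\ref{lem:Freq_WWgen} applied to the sequence $(x_k)$ converging to $x$ in $D_B$ gives $\Freq(x)\subseteq \liminf_{k\to\infty}\Freq(x_k)$, and substituting $\Spec$ for $\Freq$ on both sides is exactly the desired spectral inclusion. The one real subtlety is arranging regularity on both sides of the Besicovitch approximation simultaneously; this is handled cleanly by the freedom in Corollary~\ref{cor:dbar-closeness-for-regular-points} to pre-specify both full measure sets as the ones supplied by Theorem~\ref{thm:WW-reg-erg}.

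For the additional claims, the discrete spectrum case follows immediately from Lemma~\ref{lem:discrete_Besicovitch} applied to the pair $(x,(x_k))$ constructed above. For the weak mixing case, I would use the standard characterisation that an ergodic measure is weakly mixing if and only if $\Spec(X,T,\cdot)=\{1\}$: ergodicity forces the eigenspace of $1$ to be spanned by the constants, and weak mixing rules out any other eigenvalue. Since each weakly mixing $\mu_k$ satisfies $\Spec(X,T,\mu_k)=\{1\}$, the inclusion established above gives $\Spec(X,T,\mu)\subseteq\{1\}$. Because $1$ is always an eigenvalue of $U_T$ (constants are eigenfunctions), equality holds, and combined with the standing ergodicity of $\mu$ this shows that $\mu$ is weakly mixing.
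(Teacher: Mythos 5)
Your proposal is correct and follows essentially the same route as the paper: obtain full-measure sets of regular Wiener--Wintner generic points from Theorem~\ref{thm:WW-reg-erg}, feed them into Corollary~\ref{cor:dbar-closeness-for-regular-points}\eqref{dbar_ii} (with a countable intersection on the $\mu$-side so one point $x$ works for all $k$), and then combine Lemma~\ref{lem:freq=spec} with Lemma~\ref{lem:Freq_WWgen}, handling discrete spectrum via Lemma~\ref{lem:discrete_Besicovitch} and weak mixing via the spectral characterisation. The harmless $+\tfrac1k$ slack in your $D_B$ bound is the only cosmetic difference from the paper's argument.
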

\begin{proof}  
By Theorem \ref{thm:WW-reg-erg}, there exists a Borel set $X_\mu$ with $\mu(X_\mu)=1$  consisting of Wiener--Wintner $\mu$-generic points. Similarly, for each $k\geq 1$
let $X_{\mu_k}$ be the full measure Borel set of Wiener--Wintner $\mu_k$-generic points. 
For each $k\ge 1$ we use Corollary \ref{cor:dbar-closeness-for-regular-points} to find Borel sets $X_\mu^{(k)}\subseteq X_\mu$ 
and $X_k'\subseteq X_{\mu_k}$
such that for every $x'\in X_\mu^{(k)}$ there is $x'_k\in X_k'$ satisfying
\begin{equation}\label{dbxxk}
D_B(x',x_k') 
\leq \bar\rho(\mu,\mu_k).
\end{equation}
Take $X_\mu'=\bigcap_{k=1}^\infty X_\mu^{(k)}$. Hence, $X_\mu'$ is a full-measure Borel set consisting of Wiener--Wintner $\mu$-generic points.
Fix $x\in X'_\mu$ and for every $k\ge1$ take  $x_k\in X_k'$ so that \eqref{dbxxk} holds. Hence,   $D_B(x,x_k)\rightarrow0$ as $k\to \infty$.
Since $x$ and $x_k$ are Wiener--Wintner generic points 
using Lemma \ref{lem:Freq_WWgen} and Lemma \ref{lem:freq=spec} we get
\begin{equation}\label{eq:liminf_spec}
\Spec(X,T,\mu)=\Freq(x)\subseteq \bigcup_{K=1}^\infty\bigcap_{k=K}^\infty \Freq(x_k)=\bigcup_{K=1}^\infty\bigcap_{k=K}^\infty \Spec(X,T,\mu_k).    
\end{equation}
To prove the furthermore part, assume $\mu_k$ is weakly mixing for every $k\geq 1$. Consequently, $\Spec(X,T,\mu_k)=\{1\}$ for every $k\geq 1$. From \eqref{eq:liminf_spec} we  infer that $\Spec(X,T,\mu)=\{1\}$, proving that $\mu$ is weakly mixing. 

The conclusion for the discrete spectrum case follows from  Lemma \ref{lem:discrete_Besicovitch}. \end{proof}

\subsection{Disjointness of measure preserving systems}
By a standard measure preserving system we understand here a system of the form $(X,T,\mu)$, where $(X,T)$ is a topological dynamical system  and $\mu$ is a completion of $T$-invariant Borel probability measure. We refer to such as system as to $\mu$ when $(X,T)$ is irrelevant for our considerations. Usually, measure preserving systems are understood to be given by automorphisms of a standard Lebesgue space, but every measure preserving system given by such an automorphism is isomorphic to the one given by a completion of an invariant measure for a topological dynamical system.  
We say that measure preserving system $(X,T,\mu)$ and $(Y,S,\nu)$ are \emph{disjoint} (for short, we say that $\mu$ and $\nu$ are disjoint) if $\mu\times\nu$ is the only joining of $\mu$ and $\nu$. 
We write $\mu\perp\nu$ to denote that $\mu$ and $\nu$ are disjoint measures.
Given a class of measure preserving systems $\mathscr{C}$ we write
\[
\mathscr{C}_{\perp}=\{
\mu :\forall\nu\in\mathscr{C}\ \mu\perp\nu\}
\}
\]
for the class of all measure preserving system disjoint with every member of $\mathscr{C}$. The class of all measure preserving system disjoint with every ergodic member of $\mathscr{C}$
is denoted by $\mathscr{C}_{\perp}^{e}$. Our next theorem says that
for every topological dynamical system $(X,T)$ the set $\MTe(X)\cap \mathscr{C}_{\perp}^{e}$ is $\bar{\rho}$ closed.

\begin{thm} \label{thm:dbar-disj} Let $(\mu_m)_{m=1}^\infty\subseteq\MTe(X)$  be such that $\mu_m\in\mathscr{C}_{\perp}^e$ for every $m\ge 1$ for some class $\mathscr{C}$ of measure preserving systems. 
    If $\bar{\rho}(\mu_m,\mu)\to 0$ as $m\to\infty$, then
$\mu\in\mathscr{C}_{\perp}^e$.
\end{thm}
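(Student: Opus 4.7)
The plan is to reduce disjointness of $\mu$ from an ergodic $\nu \in \mathscr{C}$ to the known disjointness of $\mu_m$ from $\nu$, using gluing along the $\bar\rho$-optimal joinings of $\mu_m$ and $\mu$. First, by Lemma~\ref{lem:erg-dbar-closed} the limit measure $\mu$ is ergodic. Fix an ergodic $\nu \in \mathscr{C}$ on some system $(Y,S)$ and a joining $\lambda \in J(\mu,\nu)$; the goal is to show $\lambda = \mu \times \nu$.

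For each $m \ge 1$ I would choose an (almost) optimal joining $\eta_m \in J(\mu_m, \mu)$ with $\int \rho\,\mathrm{d}\eta_m \le \bar\rho(\mu_m,\mu) + 1/m$, and then construct a $T \times T \times S$-invariant measure $\kappa_m$ on $X \times X \times Y$ whose $(u,v)$-marginal is $\eta_m$ and whose $(v,w)$-marginal is $\lambda$. This is the relative-product (gluing) construction: disintegrate $\eta_m = \int \eta_m^v \otimes \delta_v \,\mathrm{d}\mu(v)$ and $\lambda = \int \delta_v \otimes \lambda^v \,\mathrm{d}\mu(v)$ over the common marginal $\mu$, and set $\kappa_m := \int \eta_m^v \otimes \delta_v \otimes \lambda^v \,\mathrm{d}\mu(v)$; invariance follows from the $\mu$-a.e. equivariance of the disintegrations. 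Project $\kappa_m$ to the first and third coordinates to obtain $\lambda_m \in J(\mu_m, \nu)$. Since $\mu_m \in \mathscr{C}_{\perp}^e$ and $\nu$ is ergodic in $\mathscr{C}$, disjointness forces $\lambda_m = \mu_m \times \nu$.

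The last step is to show $\lambda_m \to \lambda$ in the weak$^*$ topology and then compare limits. For $f \in C(X)$ that is $L$-Lipschitz with respect to $\rho$ and $g \in C(Y)$,
\[
\left|\int f \otimes g \,\mathrm{d}\lambda_m - \int f \otimes g \,\mathrm{d}\lambda\right|
= \left|\int \bigl(f(u) - f(v)\bigr) g(w)\,\mathrm{d}\kappa_m(u,v,w)\right|
\le L \|g\|_\infty \int \rho(u,v)\,\mathrm{d}\eta_m(u,v),
\]
which tends to $0$. Since Lipschitz functions are uniformly dense in $C(X)$ and tensor products span a dense subspace of $C(X \times Y)$ by Stone--Weierstrass, $\lambda_m \to \lambda$ weak$^*$. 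On the other hand $\mu_m \to \mu$ weak$^*$ by Remark~\ref{rem:weakstar}, so $\mu_m \times \nu \to \mu \times \nu$ weak$^*$. Uniqueness of weak$^*$ limits yields $\lambda = \mu \times \nu$, completing the proof.

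The main obstacle is producing the invariant triple joining $\kappa_m$ gluing $\eta_m$ and $\lambda$ over their common marginal $\mu$; this is a standard but slightly technical step relying on equivariant disintegration. An alternative, if one prefers to bypass the disintegration details, is to take any measurable (not a priori invariant) gluing with the prescribed marginals and pass to a weak$^*$ limit of Cesàro averages under $T \times T \times S$, noting that the marginals $\eta_m$ and $\lambda$ are preserved because they are already invariant.
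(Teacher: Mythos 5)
Your proof is correct, but it follows a genuinely different route from the paper. The paper stays within its generic-point framework: it passes to a strictly ergodic topological model of $\nu$, takes an \emph{ergodic} joining $\lambda\in J^e(\mu,\nu)$, and uses Theorem~\ref{thm:gen-vs-point-from-supp}/Corollary~\ref{cor:dbar-closeness-for-regular-points} to pick $x\in\Gen(\mu)$, $x_m\in\Gen(\mu_m)$ with $D_B(x,x_m)=\bar{\rho}(\mu,\mu_m)$ and $y$ with $(x,y)\in\Gen(\lambda)$; strict ergodicity gives $y\in\Gen(\nu)$, disjointness gives $(x_m,y)\in\Gen(\mu_m\times\nu)$, and an orbit-averaging estimate (in the spirit of Lemma~\ref{lem:prox_Av_n}) identifies $\int f\otimes g\,\mathrm{d}\lambda$ with $\tilde f(\mu)\tilde g(\nu)$. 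You instead argue entirely at the level of measures: glue a near-optimal $\eta_m\in J(\mu_m,\mu)$ to $\lambda$ by the relatively independent joining over the common marginal $\mu$ (your Ces\`aro-average fallback for invariance is also fine), project to get $\lambda_m\in J(\mu_m,\nu)$, invoke $\mu_m\perp\nu$ to force $\lambda_m=\mu_m\times\nu$, and conclude via the Lipschitz/Stone--Weierstrass estimate $\bigl|\int f\otimes g\,\mathrm{d}\lambda_m-\int f\otimes g\,\mathrm{d}\lambda\bigr|\le L\norm{g}_\infty\int\rho\,\mathrm{d}\eta_m\to 0$ together with $\mu_m\times\nu\to\mu\times\nu$ weak$^*$. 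Your approach buys brevity and robustness: it needs no strictly ergodic model, no Besicovitch-pseudometric machinery, it treats arbitrary (not just ergodic) joinings $\lambda$ directly, and it does not even use ergodicity of $\mu$ (so the appeal to Lemma~\ref{lem:erg-dbar-closed} is dispensable), at the cost of invoking equivariant disintegration/relative products. The paper's approach buys consistency with its toolkit --- it reuses the already-established generic-point correspondence for $\bar{\rho}$ and avoids disintegration technology altogether. Both arguments are sound.
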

\begin{proof}
Let $(\mu_m)_{m=1}^\infty\subseteq\MTe(X)$  and $\mu\in\MT(X)$ be such that $\mu_m\in\mathscr{C}_{\perp}^e$ for every $m\ge 1$ and $\bar{\rho}(\mu_m,\mu)\to 0$ as $k\to\infty$. By Lemma~\ref{lem:erg-dbar-closed} we obtain that $\mu\in\MTe(X)$.
Take any ergodic  $\nu'\in\mathscr{C}$. Let $(Y,S)$ be a strictly ergodic topological model for $\nu'$, that is $(Y,S)$ is a topological dynamical systems such that $\MSe(Y)=\{\nu\}$, where $\nu$ is isomorphic with $\nu'$.  Let $\lambda\in J^e(\mu,\nu)$. We want to show $\lambda=\mu\times\nu$ and it is enough to prove that for  $f\in C(X)$ and $g\in C(Y)$ we have
\begin{equation}\label{eq:prod-goal}
\int f(x) g(y) \text{ d}\lambda(x,y)=\left(\int f\text{ d}\mu\right)\cdot \left(\int g\text{ d}\nu\right).    
\end{equation}
To this end, we note that since $\lambda$ is ergodic and $\mu$ is its marginal, there is a Borel set $X_\lambda\subseteq\Gen(\mu)$ such that $\mu(X_\lambda)=1$ and for every $x\in X_\lambda$ there is $y\in Y$ such that $(x,y)\in\Gen(\lambda)$. Hence, by Theorem~\ref{thm:gen-vs-point-from-supp} (cf. Corollary \ref{cor:dbar-closeness-for-regular-points}\eqref{dbar_ii}) there is a Borel set $X^*\subseteq\Gen(\mu)$ such that if $x\in X^*$, then there is $y\in Y$ such that $(x,y)\in \Gen(\lambda)$ and for every 
$m\ge 1$ there is $x_m\in\Gen(\mu_m)$ with $D_B(x,x_m)=\bar{\rho}(\mu,\mu_m)$. 
For $x\in X$ and $y\in Y$ set $h(x,y)=f(x)g(y)$ and
\[\Av_n[h](x,y)=\frac1n\sum_{j=0}^{n-1}h(T^j(x),S^j(y))=\frac1n\sum_{j=0}^{n-1}f(T^j(x))\cdot g(S^j(y)).\] Given a continuous function $\varphi$ on a compact metric space and a Borel probability measure $\eta$ on the same space we define 
\[
\tilde{\varphi}(\eta)=\int\varphi\text{ d}\eta.
\]
Fix $x\in X^*$ and $y\in Y$ such that $(x,y)\in \Gen(\lambda)$.  Note that since $(Y,S)$ is strictly ergodic we have $y\in \Gen(\nu)$ and so by  $\mu_m\perp\nu$ we have that $(x_m,y)\in\Gen(\mu_m\times\nu)$ for $m\ge 1$. 
Let $\beta=\eps/(6\max\{1,\norm{g}_\infty\})$. Using continuity of $f$ we find $\alpha>0$ such that if $x,x'\in X$ satisfy 
$\rho(x,x')\le \alpha$, then $\left\lvert f(x)-f(x')\right\rvert\le \beta$.
For every $n,m\ge 1$ we have
\begin{multline}\label{ineq:perp-goal}
\left| \Av_n[h](x,y)-\tilde{f}(\mu)\tilde{g}(\nu)\right|
\le \left| \Av_n[h](x,y)-\Av_n[h](x_m,y)\right|+\\
+\left| \Av_n[h](x_m,y)-\tilde{f}(\mu_m)\tilde{g}(\nu)\right|
+\left| \tilde{f}(\mu_m)\tilde{g}(\nu)-\tilde{f}(\mu)\tilde{g}(\nu)\right|.
\end{multline}
Since $h=f\cdot g$ is continuous and $(x_m,y)\in\Gen(\mu_m\times\nu)$, we see that for every $m\ge 1$ we have
\[
\lim_{n\to\infty}\Av_n[h](x_m,y)= \int h \text{ d}(\mu_m\times\nu)=\tilde{f}(\mu_m)\tilde{g}(\nu).
\]
By Remark~\ref{rem:weakstar}, $\bar{\rho}(\mu_m,\mu)\to 0$ as $m\to\infty$ implies weak$^*$ convergence, thus $\tilde{f}(\mu_m)\to\tilde{f}(\mu)$ as $m\to\infty$. It remains to show that $\left| \Av_n[h](x,y)-\Av_n[h](x_m,y)\right|$ can be arbitrarily small for all sufficiently large $m$ and $n$. Note that
\begin{equation}\label{ineq:to-be-bounded}
    \left| \Av_n[h](x,y)-\Av_n[h](x_m,y)\right|\le \norm{g}_{\infty}\frac1n\sum_{j=0}^{n-1}\left| f(T^j(x))-f(T^j(x_m))\right|.
\end{equation}
Now, for every $n\ge 1$ we divide the set  $\{0,1,\ldots,n-1\}$ into two disjoint subsets:
\begin{enumerate}
    \item \label{Set_1-perp} Those indices $j$ in $[0,n)$ such that
    \[\rho(T^j(x),T^j(x_m))\le \alpha.\] For these $j$'s we have  
    \begin{equation}\label{G-bound}
        \norm{g}_\infty\left| f(T^j(x))-f(T^j(x_m))\right|\le \beta \norm{f}_\infty\norm{g}_\infty<\eps/6.
    \end{equation} Denote the set of these indices by $G^{(m)}(n)$.
    \item \label{Set_2-perp} The indices $j$ in $B^{(m)}(n)=[0,n)\setminus G^{(m)}(n)$. For these $j$'s we have 
    \begin{equation}\label{B-bound}
        \norm{g}_\infty\left| f(T^j(x))-f(T^j(x_m))\right|\le 2 \norm{f}_\infty\norm{g}_\infty.
    \end{equation}
\end{enumerate}
Combining \eqref{G-bound} and \eqref{B-bound} we have the following bound for the right hand side of \eqref{ineq:to-be-bounded}: 
\[
\frac{1}{n} \bigg( |G^{(m)}(n)|\cdot \frac{\eps}{6} + |B^{(m)}(n)|\cdot 2\norm{f}^{2}_\infty\norm{g}_\infty\bigg).  \]
Using Lemma \ref{lem:equiv-D_B-along} and $D_B(x_m,x)\to 0$ as $m\to\infty$ it can be easily observed that
\begin{equation*}
\lim_{m\to\infty}\lim_{n\to\infty}\frac{1}{n}\left| 
G^{(m)}(n)\right|=1 \quad\text{and}\quad
\lim_{m\to\infty}\lim_{n\to\infty}\frac{1}{n}\left|B^{(m)}(n)\right|=0.
\end{equation*}

Therefore for all sufficiently large $m$ and $n$ we can bound each term of the right hand side of \eqref{ineq:perp-goal} by $\eps/3$.
Summing up, for every $\eps>0$ and all sufficiently large $n$'s we have $\left| \Av_n[h](x,y)-\tilde{f}(\mu)\tilde{g}(\nu)\right|<\eps$, and consequently
\[
\lim_{n\to \infty}
\Av_n[h](x,y)=
\left(\int f\text{ d}\mu\right)\cdot \left(\int g\text{ d}\nu\right).
\]
On the other hand, we picked $(x,y)$, so that it is a generic point for $\lambda$. Since $h=f\cdot g$ is continuous, we see that
\[
\lim_{n\to \infty} \Av_n[h](x,y)=\int f(x) g(y) \text{ d}\lambda(x,y).
\]
We obtain that \eqref{eq:prod-goal} holds for every $f\in C(X)$ and $g\in C(Y)$, as needed. The proof is completed.
\end{proof}

We summarise the closedness properties of various classes of measures with respect to the $\bar{\rho}$ metric in the following  result. Note that any of the equivalent conditions appearing in Corollary \ref{cor:char-rho-bar-conv} can be taken as the assumption in Theorem \ref{thm:Besicovitch-summary}.  In the statement, we refer to the notion of $K$-(measure-preserving) systems ($K$ is for Kolmogorov). The actual definition is not important here (see \cite[Def. 4.5]{Rudolph}). We base our observation on the characterisation of $K$-systems provided by \cite[Thm. 6.11]{Rudolph}. 

\begin{thm}\label{thm:Besicovitch-summary}
Let $(X,T)$ be a topological dynamical system and $(\mu_m)_{m=1}^\infty\subseteq\MTe(X)$ be such that $\bar{\rho}(\mu_m,
\mu)\to 0$ as $m\to \infty$ for some $
\mu\in\MT(X)$. 
Then the following holds: 
\begin{enumerate}
\item \label{thm:conclusion-1+} 
The measure $\mu$ is ergodic and $\mu_m\to\mu$ as $m\to\infty$ in the weak$^*$ topology.
\item \label{thm:conclusion-2} It holds $\displaystyle h(\mu)\le \liminf_{m\to\infty} h(\mu_m)$. 
\item \label{thm:conclusion-3} We have $\displaystyle\Spec(X,T,\mu)\subseteq \liminf_{k\to\infty}\Spec(X,T,\mu_k)=\bigcup_{K=1}^\infty\bigcap_{k=K}^\infty \Spec(X,T,\mu_k)$.
\item \label{thm:conclusion-4} If $\mu_m$ has (rational) discrete spectrum for every $m\ge 1$, then so does $\mu$.
\item \label{thm:conclusion-5} If $\mu_m$ is totally ergodic/weakly mixing/mixing  for all $m$'s, then so is $\mu$. 
\item \label{thm:conclusion-6}  If $\mu_m$ is a $K$-system for every $m$, then so is $\mu$.
\end{enumerate}
\end{thm}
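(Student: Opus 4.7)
The plan is to reduce each conclusion to results already established, systematically invoking Corollary \ref{cor:char-rho-bar-conv} to realise the $\bar{\rho}$-convergence $\mu_m\to\mu$ by selecting points $x\in\Gen(\mu)$ and $x_m\in\Gen(\mu_m)$ with $D_B(x,x_m)\to 0$. Conclusion \eqref{thm:conclusion-1+} combines Lemma \ref{lem:erg-dbar-closed} (ergodicity is $\bar{\rho}$-closed) with Remark \ref{rem:weakstar} (the topology of $\bar{\rho}$ refines weak$^*$). Conclusion \eqref{thm:conclusion-3} is precisely Theorem \ref{thm:Besicovitch-spectrum}.

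Conclusion \eqref{thm:conclusion-4}: the discrete-spectrum statement is Lemma \ref{lem:discrete_Besicovitch} applied to $x_m\to x$ in $D_B$; when each $\mu_m$ has rational discrete spectrum, conclusion \eqref{thm:conclusion-3} places $\Spec(X,T,\mu)$ inside a $\liminf$ of sets of roots of unity, hence inside the roots of unity. Conclusion \eqref{thm:conclusion-5} splits analogously: weak mixing and total ergodicity amount to the spectral conditions $\Spec(X,T,\mu)=\{1\}$ and $\Spec(X,T,\mu)\cap\{\text{roots of unity}\}=\{1\}$ respectively, both preserved under \eqref{thm:conclusion-3}; mixing is handled by Lemma \ref{lem:mixing} applied to $x_m\to x$. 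Conclusion \eqref{thm:conclusion-6} uses the characterisation \cite[Thm.~6.11]{Rudolph} of $K$-systems as exactly those measure-preserving systems disjoint from every zero-entropy system. Taking $\mathscr{C}$ to be the class of zero-entropy systems (which is closed under countable joinings and factors), each $\mu_m$ lies in $\mathscr{C}_\perp^e$, and Theorem \ref{thm:dbar-disj} delivers $\mu\in\mathscr{C}_\perp^e$, i.e.\ $\mu$ is a $K$-system.

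The main obstacle is \eqref{thm:conclusion-2}. The plan is to invoke Katok's entropy formula $h(\nu)=\lim_{\eps\to 0}\limsup_n\tfrac{1}{n}\log s_n(\eps,\delta,\nu)$, where $s_n(\eps,\delta,\nu)$ denotes the maximum cardinality of an $(n,\eps)$-separated subset of a set of $\nu$-mass at least $1-\delta$, and to transport separated sets through the almost-diagonal joinings supplied by $\bar{\rho}$. Given $\eps>0$, choose $m$ large enough that $\bar\rho(\mu,\mu_m)<\eps^2$, and fix an ergodic joining $\lambda_m\in J^e(\mu,\mu_m)$ realising this infimum up to $o(1)$. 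By the Markov inequality applied to $(x,x_m)\in\Gen(\lambda_m)$ together with Lemma \ref{lem:equiv-D_B-along}, the set of indices $j$ satisfying $\rho(T^j(x),T^j(x_m))<\eps$ has lower density at least $1-\eps$. Starting from an $(n,4\eps)$-separated subset $E\subseteq X$ of $\mu$-mass $\ge 1-\delta$, one pushes $E$ through $\lambda_m$ and discards the indices in the small-density bad set to produce an $(n,\eps)$-separated subset of $X$ of $\mu_m$-mass $\ge 1-2\delta$ with comparable cardinality. Katok's formula applied to $\mu_m$ then yields $h(\mu_m)\ge h(\mu)-o_\eps(1)$, and \eqref{thm:conclusion-2} follows on letting $m\to\infty$ and then $\eps\to 0$. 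The delicate step is making the counting argument uniform in $n$ while the joining is only ergodic rather than equivariantly tied to the dynamics; the density formulation $\tilde{D}_B$ from Lemma \ref{lem:equiv-D_B-along} is what keeps the combinatorial bookkeeping tractable, in the same spirit as the good-set/bad-set partition used in the proofs of Lemma \ref{lem:mixing} and Theorem \ref{thm:dbar-disj}.
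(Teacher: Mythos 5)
Your reductions for items \ref{thm:conclusion-1+}, \ref{thm:conclusion-3}, \ref{thm:conclusion-4}, \ref{thm:conclusion-5} and \ref{thm:conclusion-6} coincide with the paper's proof: Lemma \ref{lem:erg-dbar-closed} plus Remark \ref{rem:weakstar}, Theorem \ref{thm:Besicovitch-spectrum} (whose ``furthermore'' part already encapsulates Lemma \ref{lem:discrete_Besicovitch}), the spectral characterisations of total ergodicity and weak mixing, Lemma \ref{lem:mixing} for mixing, and \cite[Thm.~6.11]{Rudolph} together with Theorem \ref{thm:dbar-disj} for the $K$-property, all fed by Corollary \ref{cor:char-rho-bar-conv}. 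The one place you diverge is item \ref{thm:conclusion-2}: the paper does not prove it but cites \cite{FK-preprint} (Lemma 25, Theorems 33 and 41), whereas you attempt a self-contained argument via Katok's entropy formula. That would be a worthwhile addition, but as written it has a genuine gap.

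The problematic step is ``one pushes $E$ through $\lambda_m$ and discards the indices in the small-density bad set to produce an $(n,\eps)$-separated subset \dots with comparable cardinality.'' The partners $y(x)$ of an $(n,4\eps)$-separated family need not be separated at all: the times witnessing the separation of $x$ and $x'$ may lie precisely in the bad sets $B_x\cup B_{x'}$, and distinct points of $E$ may even be matched to the same $y$ or to points in one Bowen ball. So the cardinality can collapse, and quantifying the collapse is the heart of the proof, not bookkeeping. The correct form of the argument runs in the opposite direction: bound the maximal cardinality of an $(n,5\gamma)$-separated subset of a $\mu$-large set by the $(n,\gamma)$-covering number of a $\mu_m$-large set multiplied by a combinatorial factor of order $\binom{n}{\lfloor \eta n\rfloor}N(\gamma)^{\eta n}$, accounting for the choice of the bad set (density $\le\eta$) and of an element of a fixed $N(\gamma)$-element cover of $X$ at each bad time. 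This exposes a second defect of your single-scale setup: with closeness $\eps^2$, separation $4\eps$ and cover scale $\eps$ the error term is roughly $H(\eps)+\eps\log N(\eps)$, and $\eps\log N(\eps)$ need not vanish as $\eps\to 0$ unless $X$ has finite box dimension. One must decouple the scales: fix the separation scale $\gamma$, choose $m$ so large that the density of times with $\rho(T^j(x),T^j(x_m))\ge\gamma$ is at most $\eta$ (Markov), pass to the limit in $m$, then let $\eta\to 0$ with $\gamma$ fixed, and only at the end send $\gamma\to 0$. Finally, the uniformity in $n$ that you flag is not settled by Lemma \ref{lem:equiv-D_B-along}: to run the count for a single large $n$ simultaneously over all points of $E$ you need an Egorov-type selection of a set of $\lambda_m$-measure close to $1$ of pairs sharing a common threshold $N_0$. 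With these repairs your route does yield \ref{thm:conclusion-2} and would make the paper self-contained here; as it stands, item \ref{thm:conclusion-2} is not proved.
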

\begin{proof} As noted above, throughout the proof we use frequently the equivalences  provided by Corollary~\ref{cor:char-rho-bar-conv}. 
By Lemma~\ref{lem:erg-dbar-closed} and Remark~\ref{rem:weakstar} we obtain \eqref{thm:conclusion-1+}. 
We note that \eqref{thm:conclusion-2} is actually a special case of a result obtained in \cite{FK-preprint}. To get it, one needs to combine \cite[Lemma 25]{FK-preprint}, \cite[Theorem 33]{FK-preprint}, and \cite[Theorem 41]{FK-preprint}. 
Theorem \ref{thm:Besicovitch-spectrum} combined with \eqref{thm:conclusion-1+} imply \eqref{thm:conclusion-3} and \eqref{thm:conclusion-4}. To get \eqref{thm:conclusion-5}, we need to combine Theorem \ref{thm:Besicovitch-spectrum} with spectral characterisations of total ergodicity and weak mixing: a measure-preserving system is totally ergodic (respectively, weakly mixing) if and only if $1$ is the only rational point in the spectrum (respectively, the only element of the spectrum). Mixing follows from Lemma \ref{lem:mixing}.

Finally, to prove \eqref{thm:conclusion-6}, we recall \cite[Thm. 6.11]{Rudolph}, which states that an ergodic measure has property $K$ if it is disjoint from every ergodic zero entropy measure preserving system. In our notation, writing $\mathscr{K}$ for the class of $K$-systems and $\mathscr{Z}$ for the class of zero entropy systems, we have $\mathscr{K}\cap \MTe(X)=\mathscr{Z}_{\perp}^e\cap \MTe(X)$. Now, our conclusion follows for Theorem \ref{thm:dbar-disj}.
\end{proof}
 
Our next result is a direct consequence of Lemma \ref{lem:equivalence-of-metrics-on-symbolic} and Theorem \ref{thm:Besicovitch-summary}. 
Note that any of the equivalent conditions appearing in Corollary \ref{cor:char-dbarm-conv} can be taken as the assumption in Corollary \ref{cor:Besicovitch-summary-symbolic}. Observe also that Corollary \ref{cor:Besicovitch-summary-symbolic}\eqref{cond:entropy} has a proof independent of the proof of Theorem \ref{thm:Besicovitch-summary}, see \cite{Shields}. We stated it here only for completeness.

\begin{cor}\label{cor:Besicovitch-summary-symbolic}
Let $A$ be an alphabet. 
If 
there is $\mu\in\Ms(A^\Z)$ such that 
for some sequence $(\mu_m)\subseteq\Mse(A^\Z)$ we have $\dbarm({\blue \mu_m},\mu)\to0$ as $m\to\infty$, then: 
\begin{enumerate}
    \item The measure $\mu$ is ergodic and $\mu_m\to\mu$ as $m\to\infty$ in the weak $^*$ topology.   
    \item\label{cond:entropy} It holds $h(\mu_m)\to h(\mu)$ as $m\to\infty$ 
    \item  $\displaystyle\Spec(X,T,\mu)\subseteq \liminf_{k\to\infty}\Spec(X,T,\mu_k):=\bigcup_{K=1}^\infty\bigcap_{k=K}^\infty \Spec(X,T,\mu_k)$.
   \item  If $\mu_m$ has (rational) discrete spectrum for all $m$'s then so does $\mu$.
   \item \label{mixing} If $\mu_m$ is totally ergodic/weakly mixing/mixing for all $m$'s then so does $\mu$.
   \item \label{K-property}  If $\mu_m$ is a $K$-system for every $m$, then so is $\mu$.

\end{enumerate}
\end{cor}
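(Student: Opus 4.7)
The plan is to deduce the corollary from Theorem \ref{thm:Besicovitch-summary} by transferring the hypothesis through the metric equivalence established in Lemma \ref{lem:equivalence-of-metrics-on-symbolic}. First, I would invoke Lemma \ref{lem:equivalence-of-metrics-on-symbolic} to note that $\dbarm$ and $\bar{\rho}$ are uniformly equivalent on $\Ms(A^\Z)$ (once a compatible metric $\rho$ on $A^\Z$ is fixed). Consequently, the hypothesis $\dbarm(\mu_m,\mu)\to 0$ immediately gives $\bar{\rho}(\mu_m,\mu)\to 0$, putting us exactly in the setting of Theorem \ref{thm:Besicovitch-summary} with the topological system $(A^\Z,\sigma)$.

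With this translation in hand, items (1), (3), (4), (5) and (6) of Corollary \ref{cor:Besicovitch-summary-symbolic} follow from the corresponding items \eqref{thm:conclusion-1+}, \eqref{thm:conclusion-3}, \eqref{thm:conclusion-4}, \eqref{thm:conclusion-5}, and \eqref{thm:conclusion-6} of Theorem \ref{thm:Besicovitch-summary}. In particular, the preservation of ergodicity, weak$^*$ convergence, the spectral inclusion, discrete (and rational discrete) spectrum, total ergodicity, weak mixing, mixing, and property $K$ all transfer with no modification, since none of the arguments rely on anything specific to the symbolic setting beyond what is already encapsulated in $\bar{\rho}$-convergence.

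The only item that is \emph{not} a direct corollary is \eqref{cond:entropy}: Theorem \ref{thm:Besicovitch-summary}\eqref{thm:conclusion-2} yields merely the lower semicontinuity $h(\mu)\le\liminf_{m\to\infty}h(\mu_m)$, whereas \eqref{cond:entropy} asserts the full continuity $h(\mu_m)\to h(\mu)$. This is the main obstacle, and it is genuinely symbolic in nature: the reverse inequality $\limsup_{m\to\infty}h(\mu_m)\le h(\mu)$ is the classical fact that the Kolmogorov--Sinai entropy is $\dbarm$-continuous on shift-invariant measures over a finite alphabet, proved by Ornstein and recorded, e.g.\ in Shields \cite{Shields}. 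Accordingly, for item \eqref{cond:entropy} I would simply quote Shields, which is precisely what the remark preceding the statement anticipates.

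Thus the proof is a short two-line argument: apply the metric equivalence Lemma \ref{lem:equivalence-of-metrics-on-symbolic} to move the hypothesis from $\dbarm$ to $\bar{\rho}$; then invoke the six items of Theorem \ref{thm:Besicovitch-summary} for everything except entropy continuity, and for the latter cite the Ornstein--Shields theorem. The only subtlety worth flagging in the write-up is that rational discrete spectrum in (4) is characterised spectrally (the spectrum lies in the roots of unity), so its preservation is already contained in the spectral inclusion of Theorem \ref{thm:Besicovitch-summary}\eqref{thm:conclusion-3}--\eqref{thm:conclusion-4}, and similarly total ergodicity in (5) corresponds to the absence of nontrivial rational eigenvalues, which again follows from the same spectral inclusion together with the ergodicity established in (1).
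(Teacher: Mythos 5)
Your proposal is correct and follows essentially the same route as the paper: translate the hypothesis from $\dbarm$ to $\bar{\rho}$ via Lemma \ref{lem:equivalence-of-metrics-on-symbolic} and then invoke Theorem \ref{thm:Besicovitch-summary}, noting that the full entropy continuity in item \eqref{cond:entropy} is not a consequence of the theorem (which only gives lower semicontinuity) but is the classical $\dbarm$-continuity of entropy cited from \cite{Shields} — exactly as the paper does.
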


\section{Applications}

We discuss applications of $\bar{\rho}$-metric. First, we discuss applications that reprove some known results or can be obtained by using only symbolic variants $\dbar$ and $\dbarm$. 

\subsection{Application for \texorpdfstring{$\mathscr{B}$}{B}-free shifts} 
Our first example of a family of symbolic dynamical systems to which we apply Corollary \ref{cor:Besicovitch-summary-symbolic} is the extensively studied class of $\mathscr{B}$-free shifts. These systems are dynamical counterparts of $\mathscr{B}$-free numbers. There are other situations where one can apply our results in the symbolic setting; see \cite{KKK2, CKKK, KKL}. 
We have chosen $\mathscr{B}$-free shifts, as they form a 'natural' (rather than 'constructed merely for illustration') set of examples. We do not prove anything new about $\mathscr{B}$-free shifts; we merely recover known results. Nevertheless, we want to point out that an explicit reference to $\dbarm$-convergence clarifies the reasoning. While some of the original proofs used the $\dbar$-pseudometric (sometimes in disguise), none of these approaches (except \cite{BKPLR}) did so in the full generality---there wer always necessary computations relying on the number-theoretic nature of $\mathscr{B}$-free shifts. Furthermore, the original proofs did not refer to $\dbarm$.

Sets of $\mathscr{B}$-free numbers have been investigated by many mathematicians, such as Behrend, Chowla, Davenport, Erd\H{o}s, etc. Subsequently, Sarnak \cite{Sarnak} pioneered a dynamical viewpoint on $\mathscr{B}$-free sets by examining a symbolic dynamical system induced by square-free integers (also known as the \emph{square-free shift}). This system is formed by taking the closure of the orbit of the characteristic function of the set of square-free integers under the shift operation on $\{0,1\}^\Z$. Sarnak's approach has been later extended to an arbitrary $\mathscr{B}$-free shifts, see \cite{ALR,DSKPL,Avdeeva,CSG,KPLW}. In \cite{BKPLR} the spectrum of $\mathscr{B}$-free shifts is studied.

Let $\mathscr{B}\subseteq \N$. 
The set of $\mathscr{B}$-free integers $\mathscr{F}_{\mathscr{B}}$ is defined by
\begin{equation}\label{dfn:B-free-set}
    \mathscr{F}_{\mathscr{B}}=\Z\setminus\bigcup_{b\in \mathscr{B}}b\Z. 
\end{equation}
The characteristic sequence of $\mathscr{F}_{\mathscr{B}}$, denoted by $\eta_{\mathscr{B}}$ is an element of $\{0,1\}^\Z$ and it is given for $j\in\Z$ by
\begin{equation}\label{dfn:B-free-char-seq}
    \left( \eta_{\mathscr{B}}\right)_j = \begin{cases}
        1, & \text{if } j \in \mathscr{F}_{\mathscr{B}}, \\
        0, & \text{otherwise}.
    \end{cases}
\end{equation}

\begin{defn}
    The $\mathscr{B}$-free shift $X_{\mathscr{B}}$ is the closure of the orbit of $\eta_\mathscr{B}$ in $\{0,1\}^\Z$.
\end{defn}

Let $\mathscr{B}(k)=\{b_1,\ldots,b_k\}$ with $b_i<b_j$ if $i<j$ be the set of the $k$ smallest members of $\mathscr{B}$. 
It is easy to see that  $\eta_{\mathscr{B}(k)}$ is a periodic point in $\{0,1\}^\Z$ and $\bkshift$ is just a periodic orbit. We recall the well-known Davenport-Erd\H{o}s theorem.

\begin{thm}[see \cite{DE}]\label{thm:Davenport-Erdos}
    Let $\eta_{\mathscr{B}(k)}$ and $\eta_\mathscr{B}$ be the characteristic sequences for sets, respectively, $\mathscr{F}_{\mathscr{B}(k)}$ and $\mathscr{F}_{\mathscr{B}}$. Then
    \begin{equation}\label{eq:conv-of-char-seq}
        \lim_{k\rightarrow\infty}\liminf_{n\rightarrow\infty}\dfrac{1}{n}\left|\{ 0\leq j <k  : \left(\eta_{\mathscr{B}(k)}\right)_j \neq \left(\eta_{\mathscr{B}}\right)_j\}\right|=0.
    \end{equation}
\end{thm}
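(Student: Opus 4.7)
My plan is to reduce the claim to the classical number-theoretic Davenport--Erd\H{o}s theorem on densities of multiples sets. The first step is a set-theoretic observation: since $\mathscr{B}(k)\subseteq \mathscr{B}$, we have $\mathscr{F}_{\mathscr{B}}\subseteq \mathscr{F}_{\mathscr{B}(k)}$, and hence $(\eta_{\mathscr{B}(k)})_j\geq (\eta_{\mathscr{B}})_j$ coordinate-wise. The set of coordinates $j\in[0,n)$ at which the two sequences disagree is therefore exactly $(\mathscr{F}_{\mathscr{B}(k)}\setminus \mathscr{F}_{\mathscr{B}})\cap [0,n)$, with cardinality $|\mathscr{F}_{\mathscr{B}(k)}\cap[0,n)|-|\mathscr{F}_{\mathscr{B}}\cap[0,n)|$. (I interpret the $k$ in the displayed set $\{0\le j<k\}$ of the statement as a typo for $n$; otherwise the outer expression is trivially zero.)

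Because $\mathscr{B}(k)$ is finite, $\eta_{\mathscr{B}(k)}$ is periodic with period dividing $\operatorname{lcm}(b_1,\dots,b_k)$, so the natural density $d(\mathscr{F}_{\mathscr{B}(k)})=\lim_n n^{-1}|\mathscr{F}_{\mathscr{B}(k)}\cap[0,n)|$ exists. Using the elementary identity $\liminf_n(a_n-b_n)=\lim_n a_n-\limsup_n b_n$ when $(a_n)$ converges, I would then rewrite the inner liminf as
\[
\liminf_{n\to\infty}\frac{1}{n}\bigl|(\mathscr{F}_{\mathscr{B}(k)}\setminus \mathscr{F}_{\mathscr{B}})\cap [0,n)\bigr| = d(\mathscr{F}_{\mathscr{B}(k)}) - \overline{d}(\mathscr{F}_{\mathscr{B}}).
\]
Passing to complements in $\Z$ via $\mathcal{M}(\mathscr{A}):=\bigcup_{b\in\mathscr{A}}b\Z$, the theorem thus reduces to
\[
\lim_{k\to\infty} d(\mathcal{M}(\mathscr{B}(k))) = \underline{d}(\mathcal{M}(\mathscr{B})).
\]
The sequence $d(\mathcal{M}(\mathscr{B}(k)))$ is monotone nondecreasing in $k$ and bounded above by $\underline{d}(\mathcal{M}(\mathscr{B}))$ since $\mathcal{M}(\mathscr{B}(k))\subseteq \mathcal{M}(\mathscr{B})$; so its limit $\alpha$ exists and satisfies $\alpha\leq \underline{d}(\mathcal{M}(\mathscr{B}))$.

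The hard part will be the reverse inequality $\alpha\geq \underline{d}(\mathcal{M}(\mathscr{B}))$, which is the substantive content of the Davenport--Erd\H{o}s theorem. A naive bound via $\sum_{b\in \mathscr{B}\setminus \mathscr{B}(k)} 1/b$ fails in general because this sum can diverge even when $\underline{d}(\mathcal{M}(\mathscr{B}))<1$ (Besicovitch's examples). To close this gap, I would invoke the standard logarithmic-density route: Besicovitch's theorem guarantees that the logarithmic density $\ell(\mathcal{M}(\mathscr{B}))$ exists and equals $\underline{d}(\mathcal{M}(\mathscr{B}))$; for the periodic set $\mathcal{M}(\mathscr{B}(k))$ one trivially has $\ell(\mathcal{M}(\mathscr{B}(k)))=d(\mathcal{M}(\mathscr{B}(k)))$; and a routine Abel-summation argument, exploiting monotone convergence of the partial sums $\sum_{j\le N,\,j\in \mathcal{M}(\mathscr{B}(k))}1/j$ to $\sum_{j\le N,\,j\in \mathcal{M}(\mathscr{B})}1/j$ as $k\to\infty$, gives $\ell(\mathcal{M}(\mathscr{B}))=\lim_k \ell(\mathcal{M}(\mathscr{B}(k)))$ along the increasing union $\mathcal{M}(\mathscr{B})=\bigcup_k \mathcal{M}(\mathscr{B}(k))$. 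Chaining these three ingredients yields the required equality, and since this is exactly the \emph{[DE]} result, I would cite it rather than reproduce the number-theoretic arguments.
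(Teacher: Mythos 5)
Your route is essentially the paper's: the paper offers no argument for this statement at all, it simply cites \cite{DE}, and your proposal amounts to the (correct) bookkeeping that translates the statement about the characteristic sequences $\eta_{\mathscr{B}(k)},\eta_{\mathscr{B}}$ into the classical Davenport--Erd\H{o}s statement $\lim_{k}d(\mathcal{M}(\mathscr{B}(k)))=\underline{d}(\mathcal{M}(\mathscr{B}))$ for the sets of multiples, followed by a citation of \cite{DE} for that statement. The reduction is sound (including your reading of the typo $\{0\le j<k\}$ as $\{0\le j<n\}$, the inclusion $\mathscr{F}_{\mathscr{B}}\subseteq\mathscr{F}_{\mathscr{B}(k)}$, and the identity $\liminf_n(a_n-b_n)=\lim_n a_n-\limsup_n b_n$ when $(a_n)$ converges), and it is a useful supplement since \cite{DE} is phrased in terms of sets of multiples rather than sequences. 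One caution about your internal sketch of how \cite{DE} itself would be proved: the claim that $\ell(\mathcal{M}(\mathscr{B}))=\lim_k\ell(\mathcal{M}(\mathscr{B}(k)))$ follows from a ``routine Abel-summation/monotone convergence'' argument is not right as stated --- logarithmic density is not continuous along increasing unions in general (take $A_k=\{1,\dots,k\}$), and this interchange of limits is precisely the non-trivial content of the Davenport--Erd\H{o}s theorem; also, the existence of $\ell(\mathcal{M}(\mathscr{B}))$ and its equality with $\underline{d}(\mathcal{M}(\mathscr{B}))$ is due to Davenport--Erd\H{o}s, not Besicovitch (Besicovitch's contribution is the example showing the asymptotic density of $\mathcal{M}(\mathscr{B})$ may fail to exist). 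Since you ultimately defer to \cite{DE} for exactly this content, the proof as proposed stands, but the sketch would have a genuine gap if you tried to make it self-contained.
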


Actually, the proof of the Davenport-Erd\H{o}s theorem yields that there is one common subsequence along which the $\liminf$ in \eqref{eq:conv-of-char-seq} is achieved.
\begin{thm}
\label{thm:Davenport-Erdos+}
    Let $\eta_{\mathscr{B}(k)}$ and $\eta_\mathscr{B}$ be the characteristic sequences for sets, respectively, $\mathscr{F}_{\mathscr{B}(k)}$ and $\mathscr{F}_{\mathscr{B}}$. 
    There exists a strictly increasing sequence $(n_k)_{k=1}^\infty$ such that
    \begin{equation}\label{eq:conv-of-char-seq+}
        \lim_{m\rightarrow\infty}\limsup_{k\rightarrow\infty}\dfrac{1}{n_k}\left|\{ 0\leq j <n_k  : \left(\eta_{\mathscr{B}(m)}\right)_j \neq \left(\eta_{\mathscr{B}}\right)_j\}\right|=0.
    \end{equation}
\end{thm}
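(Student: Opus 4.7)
The plan is to reduce the assertion to a quantitative reformulation of Theorem~\ref{thm:Davenport-Erdos} and then extract the common subsequence by a single application of a compactness argument to the density of $\mathscr{F}_\mathscr{B}$.

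First I would observe that since $\mathscr{B}(m)\subseteq \mathscr{B}$, we have $\mathscr{F}_{\mathscr{B}(m)}\supseteq \mathscr{F}_\mathscr{B}$, so $(\eta_{\mathscr{B}(m)})_j\geq (\eta_\mathscr{B})_j$ for every $j\in\N_0$. Setting
\[
f_m(n)=\frac{|\mathscr{F}_{\mathscr{B}(m)}\cap[0,n)|}{n}, \qquad f(n)=\frac{|\mathscr{F}_\mathscr{B}\cap[0,n)|}{n},
\]
this yields $a_m(n):=n^{-1}\lvert\{0\le j<n:(\eta_{\mathscr{B}(m)})_j\ne (\eta_\mathscr{B})_j\}\rvert=f_m(n)-f(n)$. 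Because $\eta_{\mathscr{B}(m)}$ is periodic, the density $c_m:=\lim_{n\to\infty}f_m(n)$ exists for every $m\ge 1$, and consequently
\[
\liminf_{n\to\infty}a_m(n)=c_m-\limsup_{n\to\infty} f(n)=c_m-d^*(\mathscr{F}_\mathscr{B}).
\]
Hence Theorem~\ref{thm:Davenport-Erdos} amounts to the assertion that $c_m\to d^*(\mathscr{F}_\mathscr{B})$ as $m\to\infty$.

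Next I would choose a strictly increasing sequence $(n_k)_{k=1}^\infty\subseteq\N$ with $\lim_{k\to\infty} f(n_k)=d^*(\mathscr{F}_\mathscr{B})$, which exists by the definition of the upper density. For each fixed $m\ge 1$ the equality $\lim_{k\to\infty} f_m(n_k)=c_m$ holds automatically (any subsequence of a convergent sequence has the same limit), hence
\[
\limsup_{k\to\infty}a_m(n_k)=\lim_{k\to\infty}a_m(n_k)=c_m-d^*(\mathscr{F}_\mathscr{B}).
\]
Sending $m\to\infty$ and invoking the reformulation of Theorem~\ref{thm:Davenport-Erdos} established above yields \eqref{eq:conv-of-char-seq+}.

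The only subtle point is the choice of the common subsequence: it must realise the $\limsup$ of $f$, because it is $\limsup_{n\to\infty} f(n)$, rather than its $\liminf$, that enters the formula for $\liminf_{n\to\infty} a_m(n)$. Once this observation is in place, no substantive obstacle remains; the decomposition $a_m(n)=f_m(n)-f(n)$ together with the periodicity of $\eta_{\mathscr{B}(m)}$ does the rest.
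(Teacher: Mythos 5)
Your proposal is correct. The chain of observations holds: $\mathscr{B}(m)\subseteq\mathscr{B}$ gives $\mathscr{F}_{\mathscr{B}}\subseteq\mathscr{F}_{\mathscr{B}(m)}$, so the difference set in position $[0,n)$ is exactly $(\mathscr{F}_{\mathscr{B}(m)}\setminus\mathscr{F}_{\mathscr{B}})\cap[0,n)$ and $a_m(n)=f_m(n)-f(n)$; periodicity of $\eta_{\mathscr{B}(m)}$ makes $f_m$ convergent, whence $\liminf_n a_m(n)=c_m-d^*(\mathscr{F}_{\mathscr{B}})$, and any sequence $(n_k)$ realising $d^*(\mathscr{F}_{\mathscr{B}})=\limsup_n f(n)$ works simultaneously for all $m$. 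This is, however, a different route from the paper's: the paper offers no self-contained argument and instead appeals to the internals of the Davenport--Erd\H{o}s proof in \cite{DE} (which establishes $d(\mathscr{F}_{\mathscr{B}(m)})\to d^*(\mathscr{F}_{\mathscr{B}})$ and hence that one subsequence attaining the upper density of $\mathscr{F}_{\mathscr{B}}$ serves for all $m$), whereas you treat Theorem~\ref{thm:Davenport-Erdos} purely as a black box and show that the common-subsequence strengthening is a formal consequence of it, using only the monotonicity $\mathscr{F}_{\mathscr{B}}\subseteq\mathscr{F}_{\mathscr{B}(m)}$ and the existence of the densities $c_m$. Your version is more self-contained and also makes explicit which subsequences work (exactly those along which $f(n_k)\to d^*(\mathscr{F}_{\mathscr{B}})$); the paper's version buys brevity at the cost of asking the reader to re-open \cite{DE}. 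One small point: your reading silently corrects the index typo in the paper's statement of Theorem~\ref{thm:Davenport-Erdos} (the count should run over $0\le j<n$, not $0\le j<k$), which is the intended meaning.
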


Let us denote the unique ergodic measure on $\bkshift$ by $\nu_k$. In particular, the characteristic sequence $\eta_{\mathscr{B}(k)}$ is generic for the measure $\nu_k$. Using Davenport--Erd\H{o}s result and Corollary \ref{cor:Besicovitch-summary-symbolic} we deduce the following result.
\begin{cor}
    There exists an ergodic measure $\nu \in \Mse(\bshift)$, which is called the Mirsky measure, such that 
    \begin{equation*}
        \lim_{m\rightarrow\infty}\dbarm\left(\nu_m,\nu\right)=0,
    \end{equation*}
    and $\eta_{\mathscr{B}}$ is a quasi-generic for the Mirsky measure $\nu$ along the subsequence $(n_k)_{k=1}^\infty$. Furthermore, the Mirsky measure has rational discrete spectrum, in particular its entropy $h(\nu)$ vanishes.
    \end{cor}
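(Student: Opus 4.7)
The plan is to feed the strong form of the Davenport–Erd\H os theorem (Theorem~\ref{thm:Davenport-Erdos+}) into our $\dbar$-based criterion for $\dbarm$-convergence (Corollary~\ref{cor:Besicovitch-summary-symbolic}, together with the equivalences in Corollary~\ref{cor:char-dbarm-conv}). The key observation is that each $\eta_{\mathscr{B}(m)}$ is periodic, so it is a generic point for the unique invariant measure $\nu_m$ supported on its orbit, and $\nu_m$ is a periodic orbit measure, hence has rational discrete spectrum and zero entropy.

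First, I would invoke Theorem~\ref{thm:Davenport-Erdos+} to obtain the strictly increasing sequence $(n_k)_{k=1}^\infty$ such that $\dbar_{(n_k)}(\eta_{\mathscr{B}(m)},\eta_{\mathscr{B}})\to 0$ as $m\to\infty$. By compactness of $\mathcal{M}(\{0,1\}^\Z)$, after passing to a subsequence (which I still denote $(n_k)$, noting that this only decreases the $\limsup$ defining $\dbar_{(n_k)}$, so the conclusion of Theorem~\ref{thm:Davenport-Erdos+} is preserved), I may assume that $\eta_{\mathscr{B}}$ is quasi-generic along $(n_k)$ for some $\nu\in\mathcal{M}_\sigma(\bshift)$. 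Since each $\eta_{\mathscr{B}(m)}$ is periodic, it is (quasi-)generic for $\nu_m$ along every subsequence, in particular along $(n_k)$.

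Now I apply the symbolic version of our convergence criterion: Corollary~\ref{cor:char-dbarm-conv} applied to $(\nu_m)$, $\nu$, the quasi-generic points $\eta_{\mathscr{B}(m)}$ and $\eta_{\mathscr{B}}$, and the subsequence $(n_k)$, together with $\dbar_{(n_k)}(\eta_{\mathscr{B}(m)},\eta_{\mathscr{B}})\to 0$, yields simultaneously that $\nu$ is ergodic, that $\eta_{\mathscr{B}}$ is quasi-generic for $\nu$ along $(n_k)$, and that $\dbarm(\nu_m,\nu)\to 0$ as $m\to\infty$.

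Finally, to upgrade this to the spectral statement, I observe that for each $m\ge 1$ the measure $\nu_m$ is the uniform measure on a single periodic orbit, hence has rational discrete spectrum and vanishing entropy. Consequently, Corollary~\ref{cor:Besicovitch-summary-symbolic} item~(4) yields rational discrete spectrum of $\nu$, and either item~(2) (with $h(\nu_m)=0$) or the well-known fact that discrete-spectrum measures have zero entropy then gives $h(\nu)=0$. The only minor subtlety to double-check is that refining $(n_k)$ to guarantee quasi-genericity of $\eta_{\mathscr{B}}$ does not break \eqref{eq:conv-of-char-seq+}, but this is immediate from the definition of $\dbar_{(n_k)}$ as a $\limsup$ along $(n_k)$.
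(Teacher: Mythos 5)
Your proposal is correct and follows essentially the same route as the paper: Theorem~\ref{thm:Davenport-Erdos+} gives $\dbar_{(n_k)}(\eta_{\mathscr{B}(m)},\eta_{\mathscr{B}})\to 0$, the characterisation of $\dbarm$-convergence via quasi-generic points (Corollary~\ref{cor:char-dbarm-conv}, equivalently Corollary~\ref{cor:char-rho-bar-conv} with Lemma~\ref{lem:dbar-D_B-equiv}) yields the Mirsky measure, its ergodicity, quasi-genericity of $\eta_{\mathscr{B}}$, and $\dbarm$-convergence, and Corollary~\ref{cor:Besicovitch-summary-symbolic} gives rational discrete spectrum and zero entropy, with your explicit check that each periodic-orbit measure $\nu_m$ has these properties being a welcome detail the paper leaves implicit. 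One small remark: your preliminary passage to a subsequence of $(n_k)$ is unnecessary and slightly weakens the quasi-genericity claim to the refined sequence; condition (3) of Corollary~\ref{cor:char-dbarm-conv} does not require $\eta_{\mathscr{B}}$ to be quasi-generic in advance, and its conclusion already delivers quasi-genericity along the original $(n_k)$, so you should simply drop that step.
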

\begin{proof}
The equation \eqref{eq:conv-of-char-seq+} in Theorem \ref{thm:Davenport-Erdos+} means that there is a strictly increasing sequence $(n_k)_{k=1}^\infty$ such that
\[
\dbar_{(n_k)}(\eta_{\mathscr{B}(m)},\eta_{\mathscr{B}})\to 0\quad\text{ as }m\to\infty.
\]
By Lemma \ref{lem:dbar-D_B-equiv} it follows that condition  
\eqref{cond:char-rho-bar-conv-iii} from Corollary~\ref{cor:char-rho-bar-conv} holds.
We conclude the proof by applying Corollary \ref{cor:Besicovitch-summary-symbolic}.
\end{proof}
Similarly, one can apply $\dbar$ and $\dbarm$ to study the invariant measures of the family of shift spaces given in \cite{KKL}.

The next two applications use results from the current paper. Their authors
had access to draught versions of the current manuscript.

\subsection{Density of ergodic discrete rational spectrum measures for topological dynamical systems with the specification property} Alexandre Trilles, in his PhD thesis \cite{Trilles2025}, proves the following result: \emph{If $(X,T)$ is a topological dynamical system with  the specification property, then the set of ergodic measures with full support and discrete rational spectrum is dense in $\MT(X)$.}
This strengthens a result of Sigmund \cite{Sigmund70}, who proved the density of zero entropy fully supported ergodic measures.

The specification property enables the construction of periodic points whose orbits can approximately trace given segments of orbits, we skip the definition here. The proof uses the specification property to construct a sequence of periodic points 
that converges in the Besicovitch pseudometric to a generic point of ergodic measure, being close to a fixed, predetermined invariant measure in the weak$^*$ topology. Then, the toolbox developed here shows that this ergodic invariant measure, close to the fixed one, has a rational discrete spectrum.

Using the same idea, one can prove that
for every $\beta>1$, the set of ergodic measures with full support and discrete rational spectrum is dense in $\MT(X_\beta)$, where $X_\beta$ is the $\beta$-shift.

\subsection{Density of ergodic measures among all invariant measures for topological dynamical systems with the vague specification property}
Recently, \cite{BJK} proved that any topological dynamical system with the vague specification property has ergodic measures that are dense in the space of all its invariant measures. The paper extends methods from \cite{KKK2} and \cite{CKKK} from symbolic to general topological dynamical systems relying on the $\bar{\rho}$ metric and its properties presented here.

First, the authors of \cite{BJK} construct a sequence of approximating systems for the natural extension $(X_T,S)$ of a topological dynamical system $(X,T)$. Each approximating topological dynamical system consists of $\delta$-chains for some $\delta>0$. It is shown that the periodic specification property holds for these approximations, provided the initial system is chain-mixing. The specification property ensures the density of ergodic measures in the simplex of invariant measures of each approximation.
The crucial technical step is to demonstrate that the simplices of invariant measures of these approximating systems converge to the simplex of invariant measures of the original system in the Hausdorff metric induced by $\bar{\rho}$. It remains to note that this strong form of convergence (in the Hausdorff metric induced by the $\bar{\rho}$-metric rather than just the weak$^*$ topology) ensures that structural properties, such as the density of ergodic measures, transfer from the simplices of invariant measures of approximating systems to the simplex of invariant measures of the limit system.

The key insight is that the vague specification property provides a way to connect orbits of the approximating systems to orbits in the original system while keeping the Besicovitch distance small (here the main result of \cite{CT} is used). This connection then lifts to the level of measures via the relationship between the Besicovitch pseudometric and the $\bar{\rho}$-metric, and then to the Hausdorff distance between simplices of invariant measures corresponding to $\bar{\rho}$. 

This approach allows the authors of \cite{BJK} to prove the density of ergodic measures for a much broader class of systems than was previously known, including minimal and proximal systems where traditional approaches fail. In particular, the examples discussed in \cite{BJK} show that the results apply to non-symbolic dynamical systems; hence, the use of $\bar{\rho}$ is inevitable.

We believe that our result will prove to be useful, and there are more applications to come. For example, Lorenzo J.~D\'{\i}az, Katrin Gelfert, and Michał Rams, in a series of papers, studied ergodic measures of some skew-products aiming at understanding invariant measures of partially hyperbolic diffeomorphisms.  In that setting, some measures are naturally approximated by measures whose support is contained in some horseshoe. With some additional assumptions, one can prove that the approximation means here ``up to an arbitrarily small $\bar{\rho}$ distance''. 

\section*{Acknowledgements}
All authors contributed to the paper equally. Research by SB was supported by the National Science Centre (NCN) Sonata Bis grant no. 2019/34/E/ST1/00237;
research by DK was supported by the NCN Opus grant no. 2022/47/B/ST1/02866;
MEC was supported by the NCN Polonez Bis grant no. 2021/43/P/ST1/02885 and 
NCN Polonez Bis 3 grant No. 2022/47/P/ST1/00854 (H2020 MSCA GA No. 945339); 
PO was supported by the NCN Opus grant no. 2019/35/B/ST1/02239. The financial support provided by the aforementioned institutions is gratefully acknowledged. We would like to thank Mariusz Lemańczyk for many insightful comments that helped us improve this paper.

\bibliographystyle{plain}
\bibliography{bibliography.bib}

\end{document}